\newtheorem{theorem}{Theorem}[section]
\newtheorem{lemma}[theorem]{Lemma}
\newtheorem{definition}[theorem]{Definition}
\newtheorem{corollary}[theorem]{Corollary}
\newtheorem{remark}[theorem]{Remark}
\newtheorem{proposition}[theorem]{Proposition}
\newtheorem*{theorem A}{Theorem A}
\newtheorem*{corollary B}{Corollary B}
\newtheorem*{corollary C}{Corollary C}
\newtheorem*{theorem B}{Theorem B}
\theoremstyle{definition}
\newtheorem{example}[theorem]{Example}
\begin{document}
\title[Specification and thermodynamic properties of topological NDSs]{Specification and thermodynamic properties of topological time-dependent dynamical systems}
\author[J. Nazarian Sarkooh]{J. Nazarian Sarkooh}
\author[F. H. Ghane]{F. H. Ghane$^{*}$}
\address{Department of Mathematics, Ferdowsi University of Mashhad, Mashhad, IRAN.}
\email{\textcolor[rgb]{0.00,0.00,0.84}{javad.nazariansarkooh@gmail.com}}
\address{Department of Mathematics, Ferdowsi University of Mashhad, Mashhad, IRAN.}
\email{\textcolor[rgb]{0.00,0.00,0.84}{ghane@math.um.ac.ir}}
\subjclass[2010]
{37B55; 37C60; 37C40; 37B40; 37A25.}
 \keywords{Non-autonomous dynamical system; Topological entropy; Specification property; Entropy point;
Topological pressure; Ruelle-expanding map.}
 \thanks{$^*$Corresponding author}
\begin{abstract}
This paper discusses the thermodynamic properties for certain time-dependent dynamical
systems. In particular, we are interested in time-dependent dynamical systems with the specification property.
We show that each time-dependent dynamical system given by a sequence of surjective continuous self maps of a compact metric space with the specification property has positive topological entropy and all points are entropy point. In particular, it is proved that these systems are topologically chaotic. We will treat the dynamics of uniformly Ruelle-expanding time-dependent dynamical systems on compact metric spaces and provide some sufficient conditions that these systems have the specification property. Consequently, we conclude that these systems have positive topological entropy. This extends a result of Kawan \cite{K2}, corresponding to the case when the expanding maps are smooth, to the more general case of expanding maps. Additionally, we study the topological pressure of time-dependent dynamical systems. We obtain conditions under which the topological entropy and topological pressure of any continuous potential can be computed as a limit at a definite size scale. Finally, we study the Lipschitz regularity of the topological pressure function for expansive (uniformly Ruelle-expanding) time-dependent dynamical systems on compact metric spaces.
\end{abstract}

\maketitle
\thispagestyle{empty}

\section{Introduction}
The time-dependent systems so-called non-autonomous, yield very flexible models than
autonomous cases for the study and description of real world processes.
They may be used
to describe the evolution of a wider class of phenomena, including systems which are forced
or driven. Recently, there have been major efforts in establishing a general theory of such
systems (see \cite{BV,K1,K2,K3,KR,KS,O,DTRD}), but a global theory is still out of reach. Our main goal in
this paper is to describe the topological aspects of the thermodynamic formalism for time-dependent dynamical systems.

Thermodynamic formalism, that is the formalism of equilibrium statistical physics, was adapted to the theory of dynamical systems in the classical works of Sinai, Ruelle and Bowen in the 1970s \cite{B3,DR1,DRYS,YS}.
Topological pressure, topological entropy, Gibbs Measures and equilibrium states are the fundamental notions in thermodynamic formalism. Topological pressure is the main tool in studying dimension of invariant sets and measures for dynamical systems in dimension theory. The notion of entropy, on the other hand, is one of the most important objects in dynamical systems, either as a topological invariant or as a measure of the chaoticity of dynamical systems.
For uniformly hyperbolic systems Bowen \cite{B3} presented a complete description of thermodynamic formalism, but in the non-uniformly hyperbolic case a general theory of thermodynamic formalism, despite substantial progress by several authors, is far from being complete.

In the theory of dynamical systems, topological entropy is a nonnegative extended real number measuring the complexity of a topological dynamical system. For the autonomous case, topological entropy was first introduced by Adler, Konhelm and McAndrew, via open covers for continuous maps in compact topological spaces \cite{AKM}.
In 1970, Bowen gave another definition in terms of separated and spanning sets for uniformly continuous maps in metric spaces \cite{RB}, and this definition is equivalent to Adler’s definition for continuous maps in compact metric spaces. Topological entropy has
close relationships with many important dynamical properties, such as chaos, Lyapunove exponents, the growth of the number of periodic points and so on. Moreover, positive topological entropy have remarkable role in the characterization of the dynamical behaviors; for instance, Downarowicz proved that positive topological entropy implies chaos DC2 \cite{TD}. Thus, a lot of attention has been focused on computations and
estimations of topological entropy of an autonomous dynamical system and many good results have been
obtained \cite{LBWC,BGM,RB1,RB,LWG,TNTG,SI,AM,MMFP}.
In 1996, Kolyada and Snoha extended the concept of topological entropy to time-dependent systems, based on open covers, separated sets and spanning sets, and obtained a series of important properties of these systems \cite{KS}. Recently, Kawan \cite{K1,K2,K3} introduced and studied the notion of metric entropy for
non-autonomous dynamical systems and showed that it is related via variational inequality (principle) to the topological entropy as defined by Kolyada and Snoha. More precisely, for equicontinuous topological
non-autonomous dynamical systems
$(X_{1,\infty},f_{1,\infty})$, he proved the variational inequality
\begin{equation*}
\sup_{\mu_{1,\infty}}h_{\mathcal{E}_{M}}(f_{1,\infty},\mu_{1,\infty})\leq h_{\text{top}}(f_{1,\infty}),
\end{equation*}
where $h_{\text{top}}(f_{1,\infty})$ and $h_{\mathcal{E}_{M}}(f_{1,\infty},\mu_{1,\infty})$ are the topological
and metric entropy of $(X_{1,\infty},f_{1,\infty})$, the supremum is taken over all invariant measure sequences $\mu_{1,\infty}$ and $\mathcal{E}_{M}$ is Misiurewicz class of partitions. Also, for NDSs $(M,f_{1,\infty})$ built from $C^{1}$ expanding maps $f_{n}$ on a compact Riemannian manifold $M$ with uniform bounds on expansion factors and derivatives that act in the same way on the fundamental group of $M$, he proved the full variational principle
\begin{equation*}
\sup_{\mu_{1,\infty}}h_{\mathcal{E}_{M}}(f_{1,\infty},\mu_{1,\infty})=h_{\text{top}}(f_{1,\infty}).
\end{equation*}

On the other hand, some authors provided conditions such that the dynamic has
positive topological entropy. Shao $et\ al.$ \cite{HWZ} have given an estimation of lower bound of topological
entropy for coupled-expanding systems associated with transition matrices in compact Hausdorff spaces.
Rodrigues and Varandas showed that any finitely generated continuous semigroup
action on a compact metric space with the strong orbital specification property has positive topological entropy \cite{RV}. Also they proved that if
each element of the semigroup action is local homeomorphism and semigroup enjoying the strong orbital
specification property, then every point is an entropy point. Entropy points are those that their
local neighborhoods reflect the complexity of the entire dynamical system from the viewpoint of
entropy theory.

The objective of this paper is to extend these results to time-dependent dynamical systems. We show that
any time-dependent dynamical system of surjective maps with the specification property
has positive topological entropy and all points are entropy point. In particular, these dynamical systems are topologically chaotic. Additionally, we discuss the thermodynamic properties of uniformly Ruelle-expanding 
time-dependent dynamical systems on compact metric spaces. We provide some sufficient conditions that these 
kind of time dependent dynamical systems having the specification property; consequently, these systems have positive topological entropy. This extends a result of Kawan \cite{K2} which states that each smooth expanding time-dependent system has positive topological entropy.

For the autonomous case, the definition of topological pressure by using the separated sets was introduced by Ruelle \cite{DR}. Later,  Walters \cite{PW1} presented equivalent approach to the notion of topological pressure via open covers and spanning sets. In 2008, Huang $et\ al.$ \cite{HWZ} extended these definitions to non-autonomous
dynamical systems, by using open covers, separated and spanning sets. Recently, Kawan introduced the
notion of metric pressure for a non-autonomous dynamical system which is related via a variational inequality to
the topological pressure of non-autonomous dynamical systems \cite{K2}.

The topological pressure can be computed as the limiting complexity of the dynamical system as the size scale
approaches zero. Nevertheless, some authors imposed some conditions so that the topological pressure of the system can be computed as a limit at a definite size scale. For instance, in the context of semigroup actions, Rodrigues and Varandas showed that the topological pressure of any continuous potential that satisfies the uniformly bounded variation condition can be computed as a limit at a definite size scale for any finitely generated continuous semigroup action on a compact metric space having the
$\ast$-expansive property \cite{RV}. With the same motivation, we extend these results to time-dependent dynamical systems. More precisely, we show that the topological entropy and topological pressure of any continuous potential can be computed as a limit at a definite size scale whenever the time-dependent system satisfies the $\ast$-expansive property. Moreover, we prove a strong regularity of the topological pressure function.

\textbf{This is how the paper is organized:} In Section \ref{section2}, we give a precise definition of a
time-dependent dynamical system, review the main concepts and set up our notation.
In Section \ref{section3}, we characterize entropy points of time-dependent dynamical systems with the specification property and show that any time-dependent system given by a sequence of surjective continuous self maps of a compact metric space with the specification property has positive topological entropy and all points are entropy point; in particular, this system is topologically chaotic; this means that it has positive asymptotical topological entropy. Uniformly Ruelle-expanding time-dependent dynamical systems are discussed in Section \ref{section4}. By adding some conditions, we show that uniformly Ruelle-expanding time-dependent systems on compact metric spaces possess the specification property. In Section \ref{section5}, we study the topological pressure of time-dependent systems. We introduce a special class of continuous potentials and provide another formula to compute the topological pressure of this class of potentials. Then, we obtain conditions under which the topological entropy and topological pressure of any continuous potential can be computed as a limit at a definite size scale. Additionally, we study the Lipschitz regularity of the topological pressure function for expansive (uniformly Ruelle-expanding) time-dependent dynamical systems on compact metric spaces. Finally, in Section \ref{applications}, we provide several examples of time-dependent systems which fit in our situation.
\section{Preliminaries}\label{section2}
A \emph{time-dependent} or \emph{non-autonomous} dynamical system (an \emph{NDS} for short), is a pair $(X_{1,\infty}, f_{1,\infty})$, where $X_{1,\infty}=(X_n)_{n=1}^\infty$ is a sequence of sets
and $f_{1,\infty}=(f_n)_{n=1}^\infty$ is a sequence of maps $f_n : X_n \to X_{n+1}$.
By $(X_{n,\infty},f_{n,\infty})$, we denote the pair of shifted sequences
 $X_{n,\infty}=(X_{n+k})_{k=0}^{\infty}$, $f_{n,\infty}=(f_{n+k})_{k=0}^{\infty}$
and we use similar notation for other sequences associated with an NDS.
If all the sets $X_n$ are compact metric spaces and all the $f_n$ are continuous,
we say that $(X_{1,\infty}, f_{1,\infty})$ is a \emph{topological NDS}.
Here, we assume that $X$ is a compact metric space with metric $d$, all the sets $X_{n}$ are equal to the set $X$ and we abbreviate $(X_{1,\infty}, f_{1,\infty})$ by $(X, f_{1,\infty})$. Throughout this paper we work with topological NDSs and use NDS instead of topological NDS for simplicity.
The time evolution of the system is defined by composing the maps $f_{n}$ in the obvious way.
In general, we define
\begin{equation*}
f_k^n:=f_{k+n-1}\circ\cdots\circ f_{k+1}\circ f_k \ \text{for} \ k,n\in \mathbb{N}, \ \text{and} \ f_k^0:=id_X.
\end{equation*}
We also put $f_k^{-n}:=(f_k^n)^{-1}$, which is only applied to subsets $A \subset X$.
The \emph{trajectory} of a point $x \in X$ is the sequence $(f_1^n(x))_{n=0}^\infty$.

For an NDS $(X, f_{1,\infty})$ on a compact metric space $(X,d)$, the \emph{Bowen-metrics} on $X$ are given by
\begin{equation}\label{eq8}
d_{k,n}(x,y):=\max_{0\leq i \leq n}d(f_{k}^{i}(x),f_{k}^{i}(y))\ \text{for}\ k\geq 1\ \text{and}\ n\geq 0.
\end{equation}
Also, for any $k\geq 1$, $n\geq 0$, $x\in X$ and $\epsilon >0$, we define
\begin{equation}\label{01}
B(x,k,n,\epsilon):=\{y \in X: d_{k,n}(x,y)<\epsilon\},
\end{equation}
which is called a \emph{dynamical} $(n+1)$-\emph{ball} with initial time $k$.

Fix an NDS $(X, f_{1,\infty})$  and $k\geq 1$. Based on \cite{KS}, for the NDS $(X, f_{k,\infty})$, the \emph{topological entropy} $h_{\text{top}}(X, f_{k,\infty})$ is defined as follows. A family $\mathcal{A}$ of subsets of $X$ is called a \emph{cover} (of $X$) if their union is all of the space $X$. For open covers $\mathcal{A}_{1},\mathcal{A}_{2},\ldots,\mathcal{A}_{n}$ of $X$ we denote
$$\bigvee_{i=1}^{n}\mathcal{A}_{i}=\mathcal{A}_{1}\vee\mathcal{A}_{2}\vee\cdots\vee\mathcal{A}_{n}=\big\{A_{1}\cap A_{2}\cap\cdots\cap A_{n}:A_{i}\in\mathcal{A}_{i}\ \text{for}\ i=1,\ldots,n\big\}.$$

Note that $\bigvee_{i=1}^{n}\mathcal{A}_{i}$ is also an open cover. For an open cover $\mathcal{A}$ we denote $f_{i}^{-n}(\mathcal{A})=\{f_{i}^{-n}(A):A\in\mathcal{A}\}$
and $\mathcal{A}_{i}^{n}=\bigvee_{j=0}^{n-1}f_{i}^{-j}(\mathcal{A})$. For each $j$, $f_{i}^{-j}(\mathcal{A})$ is an open cover, so $\mathcal{A}_{i}^{n}$ is also an open cover. Next, we denote by $\mathcal{N}(\mathcal{A})$ the \emph{smallest possible cardinality of a subcover chosen from} $\mathcal{A}$.

Then
$$h(X,f_{k,\infty},\mathcal{A}):=\limsup_{n\to\infty}\dfrac{1}{n}\log\mathcal{N}(\mathcal{A}_{k}^{n})$$
is said to be the \emph{topological entropy} of the NDS $(X, f_{k,\infty})$ on the cover $\mathcal{A}$.
The \emph{topological entropy} of the NDS $(X, f_{k,\infty})$ is defined by
$$h_{\text{top}}(X, f_{k,\infty}):=\sup\{h(X, f_{k,\infty},\mathcal{A}): \mathcal{A}\ \text{is a open cover of}\ X\}.$$

For introducing entropy points we need the following extension of the definition of topological entropy.
Let $Y$ be a nonempty subset of $X$. The set $Y$ may not be compact or may not exhibit any kind of invariance with respect to $f_{k,\infty}$. If $\mathcal{A}$ is a cover of $X$ we denote by $\mathcal{A}|_{Y}$ the cover $\{A\cap Y:A\in\mathcal{A}\}$ of the set $Y$. Then we define the \emph{topological entropy} of
the NDS $(X, f_{k,\infty})$ on the set $Y$ by
$$h_{\text{top}}(Y, f_{k,\infty}):=\sup\{h(Y, f_{k,\infty},\mathcal{A}): \mathcal{A}\ \text{is a open cover of}\ X\}$$
where
$$h(Y,f_{k,\infty},\mathcal{A}):=\limsup_{n\to\infty}\dfrac{1}{n}\log\mathcal{N}(\mathcal{A}_{k}^{n}|_{Y}).$$

Now, we consider the other definition for topological entropy of the NDS $(X, f_{k,\infty})$ by using separated and spanning sets.

A subset $E$ of the space $X$ is called $(n,\epsilon;f_{k,\infty})$\emph{-separated} if for any two
distinct points $x,y\in E$, $d_{k,n}(x,y)>\epsilon$. A set $F\subset X\ (n,\epsilon;f_{k,\infty})$\emph{-spans} another set $K\subset X$ provided that for each $x\in K$ there is $y\in F$ for
which $d_{k,n}(x,y)\leq\epsilon$.
For a subset $Y$ of $X$ we define $s_{n}(Y;f_{k,\infty};\epsilon)$ as the maximum cardinality
of an $(n,\epsilon;f_{k,\infty})$\emph{-separated set} in $Y$ and $r_{n}(Y;f_{k,\infty};\epsilon)$ as the smallest cardinality of a set in $Y$ which $(n,\epsilon;f_{k,\infty})$\emph{-spans} $Y$. If $Y=X$ we sometime suppress $Y$ and shortly write $s_{n}(f_{k,\infty};\epsilon)$ and $r_{n}(f_{k,\infty};\epsilon)$.
Following \cite{KS} as in the autonomous case, it can be shown that
\begin{equation}\label{eq4}
h_{\text{top}}(Y, f_{k,\infty})=\lim_{\epsilon\to 0}\limsup_{n\to\infty}\dfrac{1}{n}\log s_{n}(Y;f_{k,\infty};\epsilon)=\lim_{\epsilon\to 0}\limsup_{n\to\infty}\dfrac{1}{n}\log r_{n}(Y;f_{k,\infty};\epsilon).
\end{equation}

An autonomous dynamical system $(X,f)$ is called \emph{topologically chaotic} if $h_{\text{top}}(f)>0$.
But for the non-autonomous case, we use the definition that given by Kolyada and Snoha in \cite{KS}.

Let an NDS $(X,f_{1,\infty})$ and open cover $\mathcal{A}$ of $X$ be given, then
by \cite[Lemma 4.5]{KS} the limit
$$h^{*}(X,f_{\infty},\mathcal{A}):=\lim_{n\to\infty}h(X,f_{n,\infty},\mathcal{A})$$
exists. The quantity $h^{*}(X,f_{\infty},\mathcal{A})$ is said to be the \emph{asymptotical topological
entropy of the sequence} $f_{1,\infty}$
\emph{on the cover} $\mathcal{A}$. Put
$$h^{*}(X,f_{\infty}):=\sup_{\mathcal{A}}h^{*}(X,f_{\infty},\mathcal{A})$$
where the supremum is taken over all open covers $\mathcal{A}$ of $X$. In \cite{KS}, Kolyada and Snoha showed
that
\begin{eqnarray*}
h^{*}(X,f_{\infty})
=\lim_{n\to\infty}h_{\text{top}}(X, f_{n,\infty})
&=&\lim_{\epsilon\to 0}\lim_{n\to\infty}\limsup_{k\to\infty}\frac{1}{k}\log s_{k}(f_{n,\infty};\epsilon)\\
&=& \lim_{\epsilon\to 0}\lim_{n\to\infty}\limsup_{k\to\infty}\frac{1}{k}\log r_{k}(f_{n,\infty};\epsilon).
\end{eqnarray*}

The quantity $h^{*}(X,f_{\infty})$ will be said to be the \emph{asymptotical topological entropy} of the
sequence $f_{1,\infty}$. (In the notations $h^{*}(X,f_{\infty},\mathcal{A})$ and
$h^{*}(X,f_{\infty})$ we use the symbol $f_{\infty}$ instead of more precise $f_{1,\infty}$, because these quantities do not depend on whether we consider $f_{1,\infty}$ or $f_{i,\infty}$
for some $i\in\mathbb{N}$.)
\begin{definition}[Topologically chaotic]
An NDS $(X,f_{1,\infty})$ is said to be \emph{topologically chaotic} if it has positive asymptotical topological
entropy, i.e. $h^{*}(X,f_{\infty})>0$.
\end{definition}
\section{Specification property and topological entropy}\label{section3}
The notion of entropy is one of the most important objects in dynamical systems, either as a topological invariant or as a measure of complexity of the dynamics. Several notions of entropy had been introduced for dynamical systems in an attempt to describe its dynamical characteristics. In this section, we characterize entropy points of NDSs with the specification property and show that any NDS of surjective maps enjoying the specification property has positive topological entropy and all points are entropy point (Theorems \ref{theorem3} and \ref{theorem4}). In particular, it is topologically chaotic (Corollary \ref{corollary1}).
\subsection{Specification property and entropy points}
The notion of entropy point for finitely generated pseudogroup actions and finitely generated group actions was introduced, respectively, by Bi\'s \cite{AB} and Rodrigues and Varandas \cite{RV}. A finitely generated pseudogroup actions (group actions) $(G,G_{1})$ with generator set $G_{1}$ acting on a compact metric space $X$ admits an \emph{entropy point} $x_{0}$ if for every open neighbourhood $U$ of $x_{0}$ the equality
$h_{\text{top}}((G,G_{1}),\text{cl}(U))=h_{\text{top}}((G,G_{1}),X)$ holds. Note that, entropy points are those for which local neighborhoods reflect the complexity of the entire dynamical system in the context of topological entropy.
Now, we extend the notion of entropy point to NDSs.
\begin{definition}[Entropy point]
An NDS $(X, f_{1,\infty})$ admits an \emph{entropy point} $x_{0}$ if for any open neighbourhood $U$ of $x_{0}$ the equality $h_{\text{top}}(\text{cl}(U), f_{1,\infty})=h_{\text{top}}(X, f_{1,\infty})$ holds.
\end{definition}
Bi\'s in \cite[Theorem 2.5 and Corollary 2.6]{AB} proved remarkably that any finitely generated pseudogroup $(G,G_{1})$ acting on a compact metric space admits an entropy point. As a direct consequence of the proof of \cite[Theorem 2.5]{AB}, we have the following proposition (note that the proof of \cite[Theorem 2.5]{AB} does not require invertibility).
\begin{proposition}\label{theorem2}
Every NDS $(X, f_{1,\infty})$ admits an entropy point.
\end{proposition}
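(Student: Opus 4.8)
The plan is to argue by contradiction using compactness of $X$, reducing everything to a single structural property of the entropy functional $Y \mapsto h_{\text{top}}(Y, f_{1,\infty})$: its behaviour under finite unions. First I would record two elementary facts. \emph{Monotonicity}: if $Z \subseteq W$ then $\mathcal{N}(\mathcal{A}_1^n|_Z) \le \mathcal{N}(\mathcal{A}_1^n|_W)$ for every open cover $\mathcal{A}$ of $X$ (any subfamily of $\mathcal{A}_1^n$ covering $W$ also covers $Z$), whence $h_{\text{top}}(Z, f_{1,\infty}) \le h_{\text{top}}(W, f_{1,\infty})$. In particular $h_{\text{top}}(\text{cl}(U), f_{1,\infty}) \le h_{\text{top}}(X, f_{1,\infty})$ for every open $U$, so a point $x_0$ \emph{fails} to be an entropy point exactly when some open $U \ni x_0$ satisfies the strict inequality $h_{\text{top}}(\text{cl}(U), f_{1,\infty}) < h_{\text{top}}(X, f_{1,\infty})$.

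The crucial step --- and the one I expect to require the most care --- is the \emph{finite stability} of the entropy on (possibly non-compact) subsets:
\[
h_{\text{top}}\Big(\bigcup_{i=1}^m Y_i, f_{1,\infty}\Big) = \max_{1 \le i \le m} h_{\text{top}}(Y_i, f_{1,\infty}).
\]
The inequality $\ge$ is immediate from monotonicity. For $\le$, I would fix an open cover $\mathcal{A}$ of $X$ and use the subadditivity $\mathcal{N}(\mathcal{A}_1^n|_{\bigcup_i Y_i}) \le \sum_{i=1}^m \mathcal{N}(\mathcal{A}_1^n|_{Y_i})$, obtained by gluing, for each $i$, a minimal subfamily of $\mathcal{A}_1^n$ covering $Y_i$ into one subfamily whose union contains $\bigcup_i Y_i$. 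Bounding the sum by $m \max_i \mathcal{N}(\mathcal{A}_1^n|_{Y_i})$, taking $\tfrac1n\log$, and letting $n \to \infty$ kills the $\tfrac1n\log m$ term; the delicate point is the interchange $\limsup_n \max_i a_i(n) = \max_i \limsup_n a_i(n)$, valid precisely because there are finitely many indices (pass to a subsequence realizing the outer $\limsup$ along which one fixed index attains the maximum). This gives $h(\bigcup_i Y_i, f_{1,\infty}, \mathcal{A}) \le \max_i h(Y_i, f_{1,\infty}, \mathcal{A})$; taking the supremum over covers $\mathcal{A}$ and exchanging $\sup_{\mathcal{A}}$ with the finite $\max_i$ yields the claim. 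I would stress that nothing in this argument uses invertibility or compactness of the individual $Y_i$, which is exactly why Bi\'s's argument transfers to the present non-autonomous, non-invertible setting.

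With finite stability in hand the proposition is immediate. Suppose, for contradiction, that $(X, f_{1,\infty})$ admits no entropy point. Then every $x \in X$ carries an open neighbourhood $U_x$ with $h_{\text{top}}(\text{cl}(U_x), f_{1,\infty}) < h_{\text{top}}(X, f_{1,\infty})$. The family $\{U_x\}_{x \in X}$ is an open cover of the compact space $X$, so finitely many $U_{x_1}, \dots, U_{x_m}$ already cover $X$; consequently $X = \bigcup_{i=1}^m \text{cl}(U_{x_i})$. Finite stability then forces
\[
h_{\text{top}}(X, f_{1,\infty}) = \max_{1 \le i \le m} h_{\text{top}}(\text{cl}(U_{x_i}), f_{1,\infty}) < h_{\text{top}}(X, f_{1,\infty}),
\]
a contradiction (the same chain works verbatim when $h_{\text{top}}(X, f_{1,\infty}) = +\infty$, since a finite maximum of finite values cannot be infinite). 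Hence $(X, f_{1,\infty})$ admits an entropy point.
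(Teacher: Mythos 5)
Your proof is correct, but it is not the paper's argument: the paper gives no self-contained proof at all, instead deducing the proposition directly from Bi\'s's Theorem 2.5 in \cite{AB} on finitely generated pseudogroup actions, with the single remark that Bi\'s's proof does not require invertibility. Bi\'s's argument is constructive: using the fact that the entropy of a finite union of subsets is the maximum of their entropies --- exactly your finite-stability lemma --- one covers the space by finitely many closed balls of radius $1/n$, selects one whose closure carries full entropy, and iterates inside it, so the entropy point is exhibited as the intersection of a nested sequence of closed sets of shrinking diameter. You replace this nested construction by a one-step compactness-and-contradiction argument: if no point were an entropy point, every $x$ would have a neighbourhood $U_x$ with $h_{\text{top}}(\text{cl}(U_x), f_{1,\infty}) < h_{\text{top}}(X, f_{1,\infty})$, and a finite subcover would contradict finite stability. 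Both routes rest on the same core lemma, and your proof of it is sound: the subadditivity $\mathcal{N}(\mathcal{A}_1^n|_{\cup_i Y_i}) \leq \sum_i \mathcal{N}(\mathcal{A}_1^n|_{Y_i})$, the interchange $\limsup_n \max_i = \max_i \limsup_n$ for finitely many sequences (via the pigeonhole/subsequence argument you indicate), and the commutation of $\sup_{\mathcal{A}}$ with a finite maximum are all handled correctly, as is the case $h_{\text{top}}(X, f_{1,\infty}) = +\infty$. What your version buys is a fully self-contained proof in the non-autonomous, non-invertible setting, at the cost of being non-constructive (Bi\'s's nesting actually locates the entropy point); what the paper's citation buys is brevity, together with the observation that the pseudogroup argument transfers verbatim.
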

The notion of specification property for a continuous map on a compact metric space X was introduced
by Bowen \cite{RB3}. A continuous self-map $f$ acting on a compact metric space $(X,d)$ satisfies the
\emph{specification property} if for any $\epsilon>0$ there exists a positive integer $N=N(\epsilon)$ such that the following holds: for any integer $s\geq 2$, any $s$ points $x_{1},x_{2},\ldots,x_{s}\in X$ and any sequence of integers $1=j_{1}\leq k_{1}<j_{2}\leq k_{2}<\cdots<j_{s}\leq k_{s}$ with $j_{n}-k_{n-1}\geq N$
for $n=2,\ldots,s$, there is a point $x\in X$ such that
\begin{center}
$d\big(f^{j_{m}+i-1}(x),f^{i}(x_{m})\big)\leq\epsilon\ \text{for all}\ 0\leq i\leq k_{m}-j_{m}\ \text{and}\
1\leq m\leq s$.
\end{center}

Specification property means that pieces of orbits of $f$ can be $\epsilon$-shadowed by an individual orbit provided that the time lag between each shadowing is any prefixed time larger than $N(\epsilon)$. In the following definition we give a notion of specification property to NDSs that extends the above definition.
\begin{definition}[Specification property]\label{SP}
An NDS $(X, f_{1,\infty})$ has the \emph{specification property} if for any $\epsilon>0$ there is a positive integer $N=N(\epsilon)$ such that the following holds: for any integer $s\geq 2$, any $s$ points $x_{1},x_{2},\ldots,x_{s}\in X$ and any sequence of integers $1=j_{1}\leq k_{1}<j_{2}\leq k_{2}<\cdots<j_{s}\leq k_{s}$ with
$j_{n}-k_{n-1}\geq N$ for $n=2,\ldots,s$, there is a point $x\in X$ such that
\begin{equation}\label{eq11}
d\big(f_{1}^{j_{m}+i-1}(x),f_{j_{m}}^{i}(x_{m})\big)\leq\epsilon\ \text{for all}\ 0\leq i\leq k_{m}-j_{m}\ \text{and}\ 1\leq m\leq s.
\end{equation}
\end{definition}
\begin{remark}
Note that, if $(X, f_{1,\infty})$ is an NDS of surjective maps on a compact metric space $(X,d)$, then condition (\ref{eq11}) can be replaced with
\begin{equation}\label{eq12}
d\big(f_{1}^{i}(x),f_{1}^{i}(x_{m})\big)\leq\epsilon\ \text{for all}\ j_{m}-1\leq i\leq k_{m}-1\ \text{and}\ 1\leq m\leq s,
\end{equation}
because we can substitute the point $x_{m}$ with some point of $\{f_{1}^{1-j_{m}}(x_{m})\}$ for every $1\leq m\leq s$.
Hence, for simplicity, in the context of NDSs of surjective maps, we use the condition (\ref{eq12}) instead of (\ref{eq11}).
\end{remark}
In what follows, we show that the specification property for NDSs  is a sufficient condition for that all points are entropy point. Moreover, it yields the stronger that local complexity coincides with $h^{*}(X, f_{\infty})$. More precisely, we have the following theorem.
\begin{theorem}\label{theorem3}
Let $(X, f_{1,\infty})$ be an NDS of surjective maps on a compact metric space $X$ without any isolated point. If the NDS $(X, f_{1,\infty})$ satisfies the specification property, then any point of $X$ is an entropy point and $h_{\text{top}}(X, f_{1,\infty})=h_{\text{top}}(X, f_{k,\infty})$ for every $k\geq 1$.
In particular, $h_{\text{top}}(X, f_{1,\infty})=h^{*}(X, f_{\infty})$.
\end{theorem}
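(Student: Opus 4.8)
The plan is to establish, for every $x_{0}\in X$ and every open neighbourhood $U$ of $x_{0}$, the chain
$$h^{*}(X,f_{\infty})\le h_{\text{top}}(\text{cl}(U),f_{1,\infty})\le h_{\text{top}}(X,f_{1,\infty})\le h^{*}(X,f_{\infty}),$$
from which everything follows: the three outer quantities coincide, so $h_{\text{top}}(\text{cl}(U),f_{1,\infty})=h_{\text{top}}(X,f_{1,\infty})$ (every point is an entropy point) and $h_{\text{top}}(X,f_{1,\infty})=h^{*}(X,f_{\infty})$. The middle inequality is immediate, since an $(n,\epsilon;f_{1,\infty})$-separated subset of $\text{cl}(U)$ is also one of $X$, whence $s_{n}(\text{cl}(U);f_{1,\infty};\epsilon)\le s_{n}(f_{1,\infty};\epsilon)$ and, by (\ref{eq4}), $h_{\text{top}}(\text{cl}(U),f_{1,\infty})\le h_{\text{top}}(X,f_{1,\infty})$. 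The equality $h_{\text{top}}(X,f_{1,\infty})=h_{\text{top}}(X,f_{k,\infty})$ will drop out of the next step.

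Next I would record two comparisons between the systems $f_{1,\infty}$ and $f_{k,\infty}$. Surjectivity of each $f_{n}$ makes $f_{1}^{k-1}$ onto, so each $y$ has a preimage $z_{y}\in(f_{1}^{k-1})^{-1}(y)$ with $f_{1}^{(k-1)+i}(z_{y})=f_{k}^{i}(y)$; lifting an $(n,\epsilon;f_{k,\infty})$-separated set in this way yields an $((k-1)+n,\epsilon;f_{1,\infty})$-separated set of the same cardinality, so $h_{\text{top}}(X,f_{k,\infty})\le h_{\text{top}}(X,f_{1,\infty})$ and therefore $h^{*}(X,f_{\infty})=\lim_{k}h_{\text{top}}(X,f_{k,\infty})\le h_{\text{top}}(X,f_{1,\infty})$. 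For the reverse inequality I would use sub-multiplicativity of spanning numbers: from $d_{1,a+b}(x,y)=\max\{d_{1,a}(x,y),\,d_{a+1,b}(f_{1}^{a}x,f_{1}^{a}y)\}$, covering $X$ by the product cells of an $(a,\epsilon)$-spanning set for $f_{1,\infty}$ and a $(b,\epsilon)$-spanning set for $f_{a+1,\infty}$ gives
$$r_{a+b}(f_{1,\infty};2\epsilon)\le r_{a}(f_{1,\infty};\epsilon)\,r_{b}(f_{a+1,\infty};\epsilon).$$
Fixing $a=k-1$ and letting $b\to\infty$ kills the prefix factor and produces $h_{\text{top}}(X,f_{1,\infty})\le h_{\text{top}}(X,f_{k,\infty})$. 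Hence $h_{\text{top}}(X,f_{1,\infty})=h_{\text{top}}(X,f_{k,\infty})=h^{*}(X,f_{\infty})$, which settles the rightmost inequality of the chain and the asserted independence of the initial index; note that specification is \emph{not} needed for this part.

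The heart of the argument, and the only place specification enters, is the leftmost inequality $h_{\text{top}}(\text{cl}(U),f_{1,\infty})\ge h^{*}(X,f_{\infty})$. Fix $\epsilon_{0}>0$ with $\{x:d(x,x_{0})\le\epsilon_{0}\}\subseteq U$, and for $\gamma\in(0,\epsilon_{0}]$ let $N=N(\gamma)$ be the specification constant, chosen non-decreasing with $N(\gamma)\to\infty$ as $\gamma\to0$. Given a maximal $(n,3\gamma;f_{N+1,\infty})$-separated set $E_{n}\subseteq X$, I would apply the specification property (\ref{eq11}) with $s=2$ to the data $x_{1}=x_{0}$, $[j_{1},k_{1}]=[1,1]$ and $x_{2}=y$, $[j_{2},k_{2}]=[N+1,N+n+1]$ (the gap equals $N$), obtaining for each $y\in E_{n}$ a point $z_{y}$ with $d(z_{y},x_{0})\le\gamma$, so $z_{y}\in\text{cl}(U)$, and $d(f_{1}^{N+i}(z_{y}),f_{N+1}^{i}(y))\le\gamma$ for $0\le i\le n$. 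The triangle inequality then converts $3\gamma$-separation of the $y$'s into $\gamma$-separation of the $z_{y}$'s on the window $[N,N+n]$, so $\{z_{y}\}$ is an $(N+n,\gamma;f_{1,\infty})$-separated subset of $\text{cl}(U)$ and $s_{N+n}(\text{cl}(U);f_{1,\infty};\gamma)\ge s_{n}(X;f_{N+1,\infty};3\gamma)$. Dividing by $N+n$ and letting $n\to\infty$ (with $N$ fixed, so $\tfrac{n}{N+n}\to1$) gives $\limsup_{m}\tfrac1m\log s_{m}(\text{cl}(U);f_{1,\infty};\gamma)\ge\sigma_{N+1}(3\gamma)$, where $\sigma_{k}(\delta):=\limsup_{n}\tfrac1n\log s_{n}(X;f_{k,\infty};\delta)$.

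The step I expect to be the main obstacle is the passage $\gamma\to0$, because specification couples the shift $N(\gamma)$ to the scale $\gamma$: the construction only controls the diagonal value $\sigma_{N(\gamma)+1}(3\gamma)$, in which both the initial index $N(\gamma)+1\to\infty$ and the scale $3\gamma\to0$. To recover $h^{*}$ I would use that $\sigma_{k}(\delta)$ is non-increasing in $\delta$ and that $\tau(\delta):=\lim_{k\to\infty}\sigma_{k}(\delta)$ exists; for fixed $\delta_{0}$ and all $\gamma$ with $3\gamma\le\delta_{0}$ one has $\sigma_{N(\gamma)+1}(3\gamma)\ge\sigma_{N(\gamma)+1}(\delta_{0})$, whose $\liminf$ as $\gamma\to0$ is at least $\tau(\delta_{0})$ since $N(\gamma)+1\to\infty$. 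Letting $\delta_{0}\to0$ and invoking the Kolyada--Snoha formula $\lim_{\delta\to0}\tau(\delta)=h^{*}(X,f_{\infty})$ then yields $\liminf_{\gamma\to0}\sigma_{N(\gamma)+1}(3\gamma)\ge h^{*}(X,f_{\infty})$, and hence the leftmost inequality. Finally, the hypothesis that $X$ has no isolated point is what keeps the conclusion non-vacuous: if $x_{0}$ were isolated, then $\{x_{0}\}$ would be an admissible neighbourhood with $h_{\text{top}}(\{x_{0}\},f_{1,\infty})=0$, forcing $h_{\text{top}}(X,f_{1,\infty})=0$; its absence guarantees that every neighbourhood can absorb the full complexity delivered by the construction.
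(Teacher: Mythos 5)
Your proposal is correct, and on two points it is actually tighter than the paper's own proof. The heart of your argument --- using specification to glue a point $\gamma$-close to $x_{0}$ to a maximal separated set of a shifted system, so that $3\gamma$-separation for $f_{N+1,\infty}$ becomes $\gamma$-separation inside $\text{cl}(U)$ for $f_{1,\infty}$ --- is exactly the paper's construction; the only cosmetic difference is that you apply (\ref{eq11}) directly to the points of the separated set, while the paper first chooses preimages under $f_{1}^{N(\epsilon/4)+k}$ and works with the surjective reformulation (\ref{eq12}). The genuine differences are two. First, you prove $h_{\text{top}}(X,f_{k,\infty})\leq h_{\text{top}}(X,f_{1,\infty})$ by lifting separated sets through the surjection $f_{1}^{k-1}$; combined with your spanning-set submultiplicativity (equivalently \cite[Lemma 4.5]{KS}), this gives $h_{\text{top}}(X,f_{1,\infty})=h_{\text{top}}(X,f_{k,\infty})=h^{*}(X,f_{\infty})$ for \emph{every} NDS of surjective maps, with no specification hypothesis at all --- a stronger statement than the paper's, which routes this equality through its display (\ref{05}); a by-product is that the hypotheses of Corollary \ref{corollary2} can never be satisfied by surjective systems, so that corollary is vacuous. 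Second, and most importantly, you isolate and repair the delicate step that the paper glosses over: the construction only yields the fixed-scale bound $h_{\text{top}}(\text{cl}(U),f_{1,\infty})\geq\sigma_{N(\gamma)+1}(3\gamma)$, where $\sigma_{k}(\delta):=\limsup_{n}\frac{1}{n}\log s_{n}(f_{k,\infty};\delta)$ and the initial index is coupled to the scale, whereas the chain (\ref{05}) promotes this to $h_{\text{top}}(\text{cl}(V),f_{1,\infty})\geq h_{\text{top}}(X,f_{N(\epsilon/4)+(k+1),\infty})$, which does not follow: the fixed-scale complexity bounds the entropy of the shifted system from \emph{below}, not from above, so the paper's middle inequality in (\ref{05}) is a gap. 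Your limiting argument --- monotonicity of $\sigma_{k}(\delta)$ in $\delta$, existence of $\tau(\delta)=\lim_{k}\sigma_{k}(\delta)$, the Kolyada--Snoha identity $h^{*}(X,f_{\infty})=\lim_{\delta\to 0}\tau(\delta)$, together with the legitimate normalization that $N(\gamma)$ be nondecreasing with $N(\gamma)\to\infty$ (any larger constant still witnesses specification) --- is exactly what closes it. Two minor remarks: ``maximal'' separated set should be read as ``of maximal cardinality''; and your closing comment on isolated points is only heuristic, since neither your argument nor the paper's actually invokes that hypothesis in this theorem (it is what makes Theorem \ref{theorem4} work).
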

\begin{proof}
By \cite[Lemma 4.5]{KS} we have $h_{\text{top}}(X, f_{1,\infty})\leq h_{\text{top}}(X, f_{k,\infty})$
for all $k\geq 1$. Also, by relation (\ref{eq4}) we know that
$$h_{\text{top}}(X, f_{k,\infty})=\lim_{\epsilon\to 0}\limsup_{n\to\infty}\dfrac{1}{n}\log s_{n}(f_{k,\infty};\epsilon).$$

For any $z\in X$ and any open neighborhood $V$ of $z$ we show that
$h_{\text{top}}(cl(V), f_{1,\infty})=h_{\text{top}}(X, f_{1,\infty})$.
Let $k\geq 1$ and define $W_{\epsilon}:=\{y\in V: d(y,\partial V)>\dfrac{\epsilon}{4}\}$ for $\epsilon>0$.
Now, fix $\epsilon>0$ such that the open set $W_{\epsilon}$ be nonempty. Assume that
\begin{itemize}
\item
$N(\frac{\epsilon}{4})\geq 1$ is given by the specification property;
\item
 $E:=\{w_{1}, w_{2},\ldots, w_{l}\}\subseteq X$ is a
maximal $(n,\epsilon;f_{N(\frac{\epsilon}{4})+(k+1),\infty})$-separated set;
\item
$E^{\prime}=\{w_{1}^{\prime}, w_{2}^{\prime},\ldots, w_{l}^{\prime}\}\subseteq X$ is a preimage
set of $E$ under $f_{1}^{N(\frac{\epsilon}{4})+k}$, i.e. $f_{1}^{N(\frac{\epsilon}{4})+k}(w_{i}^{\prime})=w_{i}$ for $1\leq i\leq l$;
\item
$y\in W_{\epsilon}$ is an arbitrary point ($W_{\epsilon}\neq \emptyset$, because $X$ does not have any isolated point).
\end{itemize}

Let $j_{1}=k_{1}=1$, $j_{2}=N(\frac{\epsilon}{4})+k+1$ and $k_{2}=N(\frac{\epsilon}{4})+k+n+1$. By the definition of specification property for each $w_{i}^{\prime}\in E^{\prime}$ by taking $x_{1}=y$ and $x_{2}=w_{i}^{\prime}$, there exists $y_{i}\in B(y,\frac{\epsilon}{4})$ such that
$f_{1}^{N(\frac{\epsilon}{4})+k}(y_{i})\in B(f_{1}^{N(\frac{\epsilon}{4})+k}(w_{i}^{\prime}),N(\frac{\epsilon}{4})+(k+1),n,\frac{\epsilon}{4})=B(w_{i},N(\frac{\epsilon}{4})+(k+1),n,\frac{\epsilon}{4})$.
Since $E:=\{w_{1},w_{2},\ldots,w_{l}\}\subseteq X$ is a
maximal $(n,\epsilon;f_{N(\frac{\epsilon}{4})+(k+1),\infty})$-separated set,
then the set $\{y_{i}\}_{i=1}^{l}\subseteq cl(V)$ is
an $(N(\frac{\epsilon}{4})+k+n,\frac{\epsilon}{2};f_{1,\infty})$-separated set. So
\begin{center}
$s_{N(\frac{\epsilon}{4})+k+n}(cl(V);f_{1,\infty};\frac{\epsilon}{2})\geq
s_{n}(f_{N(\frac{\epsilon}{4})+(k+1),\infty};\epsilon)$.
\end{center}
Thus, we have
\begin{eqnarray*}
h_{\text{top}}(cl(V),f_{1,\infty})
&\geq & \limsup_{n\to\infty}\dfrac{1}{N(\frac{\epsilon}{4})+k+n}\log s_{N(\frac{\epsilon}{4})+k+n}
(cl(V);f_{1,\infty};\frac{\epsilon}{2})\\
&\geq &\limsup_{n\to\infty}\dfrac{1}{N(\frac{\epsilon}{4})+k+n}\log s_{n}(f_{N(\frac{\epsilon}{4})+(k+1),\infty};\epsilon)\\
&= & \limsup_{n\to\infty}\dfrac{1}{n}\log s_{n}(f_{N(\frac{\epsilon}{4})+(k+1),\infty};\epsilon).
\end{eqnarray*}
This implies that
\begin{eqnarray}\label{05}
h_{\text{top}}(X,f_{1,\infty})\geq h_{\text{top}}(cl(V),f_{1,\infty})\geq h_{\text{top}}(X,f_{N(\frac{\epsilon}{4})+(k+1),\infty})\geq h_{\text{top}}(X,f_{1,\infty}).
\end{eqnarray}
Hence we have $h_{\text{top}}(cl(V), f_{1,\infty})=h_{\text{top}}(X, f_{1,\infty})$, i.e. $z$ is an entropy point.
Since $z\in X$ was arbitrary, it follows that every point of $X$ is an entropy point.

On the other hand, since $k\geq 1$ was arbitrary and $h_{\text{top}}(X, f_{1,\infty})\leq h_{\text{top}}(X, f_{k,\infty})$, the relation (\ref{05}) implies that $h_{\text{top}}(X, f_{1,\infty})=h_{\text{top}}(X, f_{k,\infty})$ for every $k\geq 1$. Consequently, $h_{\text{top}}(X, f_{1,\infty})=h^{*}(X, f_{\infty})$ which completes the proof of the theorem.
\end{proof}
As an application of Theorem \ref{theorem3}, the following corollary is a criterion under which an NDS does not have the specification property.
\begin{corollary}\label{corollary2}
Let $(X, f_{1,\infty})$ be an NDS of surjective maps on a compact metric space $X$ without any isolated point. If $h^{*}(X, f_{\infty})>h_{\text{top}}(X, f_{1,\infty})$ or $h_{\text{top}}(X, f_{i,\infty})>h_{\text{top}}(X, f_{j,\infty})$ for some $i>j\geq 1$, then NDS $(X, f_{1,\infty})$ does not have the specification property.
\end{corollary}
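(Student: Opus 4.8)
The plan is to argue by contraposition, reducing everything directly to Theorem \ref{theorem3}. The standing hypotheses on $(X,f_{1,\infty})$---surjectivity of each $f_n$ together with the absence of isolated points in the compact metric space $X$---are precisely those required to invoke that theorem, so the only real content is to unwind its two conclusions into a denial of each of the two alternatives appearing in the statement.

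First I would suppose, toward a contradiction, that the NDS $(X,f_{1,\infty})$ does possess the specification property. Theorem \ref{theorem3} then yields simultaneously that $h_{\text{top}}(X,f_{1,\infty})=h_{\text{top}}(X,f_{k,\infty})$ for every $k\ge 1$ and that $h_{\text{top}}(X,f_{1,\infty})=h^{*}(X,f_{\infty})$. These two equalities are exactly the structural rigidity that the specification property forces on the shifted entropies, and they are what the two hypotheses of the corollary are designed to violate.

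Next I would treat the two alternatives separately. If the first one, $h^{*}(X,f_{\infty})>h_{\text{top}}(X,f_{1,\infty})$, were in force, it would immediately contradict the equality $h_{\text{top}}(X,f_{1,\infty})=h^{*}(X,f_{\infty})$ supplied by the theorem. If instead the second alternative, $h_{\text{top}}(X,f_{i,\infty})>h_{\text{top}}(X,f_{j,\infty})$ for some $i>j\ge 1$, held, I would apply the equality $h_{\text{top}}(X,f_{1,\infty})=h_{\text{top}}(X,f_{k,\infty})$ with $k=i$ and with $k=j$ to obtain $h_{\text{top}}(X,f_{i,\infty})=h_{\text{top}}(X,f_{1,\infty})=h_{\text{top}}(X,f_{j,\infty})$, contradicting the assumed strict inequality. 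In either case the specification property cannot hold, which is precisely the contrapositive of the assertion.

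Since the whole argument is a formal consequence of Theorem \ref{theorem3}, there is no genuine obstacle to overcome; the only points deserving care are to confirm that the running assumptions match the hypotheses of that theorem so its conclusion may be quoted verbatim, and to observe that the chain of equalities $h_{\text{top}}(X,f_{i,\infty})=h_{\text{top}}(X,f_{1,\infty})=h_{\text{top}}(X,f_{j,\infty})$ actually renders all the shifted entropies equal under specification, so that the clause \emph{for some $i>j$} in the hypothesis could just as well be strengthened to \emph{for all $i>j$}.
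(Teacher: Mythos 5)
Your proof is correct and is exactly the argument the paper intends: the corollary is stated there as an immediate application of Theorem \ref{theorem3}, obtained by contraposition just as you do, since specification would force $h_{\text{top}}(X,f_{1,\infty})=h_{\text{top}}(X,f_{k,\infty})$ for all $k\geq 1$ and $h_{\text{top}}(X,f_{1,\infty})=h^{*}(X,f_{\infty})$, contradicting either alternative. Your closing observation that all shifted entropies coincide under specification is also consistent with the paper's conclusion in Theorem \ref{theorem3}.
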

\subsection{Specification property and positive topological entropy}
In this subsection we show that any NDS of surjective maps with the specification property has positive topological entropy. In the other words, the specification property is enough to guarantee that any NDS of surjective maps has positive topological entropy. Moreover, we prove that any NDS of surjective maps with the specification property is topologically chaotic.
\begin{theorem}\label{theorem4}
Let $(X, f_{1,\infty})$ be an NDS of surjective maps on a compact metric spase $X$ without any isolated point. If the NDS  $(X, f_{1,\infty})$ satisfies the specification property, then it has positive topological entropy, i.e. $h_{\text{top}}(X,f_{1,\infty})>0$.
\end{theorem}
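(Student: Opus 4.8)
The plan is to show that the specification property forces exponential growth in the number of $(n,\epsilon)$-separated points for some fixed scale $\epsilon>0$, which by relation (\ref{eq4}) yields positive topological entropy. The key idea is that since $X$ has no isolated point, I can fix two points at macroscopic distance and use the shadowing guaranteed by specification to independently prescribe which of the two a shadowing orbit follows in each of many ``time windows.'' Because the windows are disjoint, distinct binary choices produce points that are separated in the Bowen metric, so the count of separated points grows like $2^{(\text{number of windows})}$.

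Concretely, I would first pick $\epsilon_{0}>0$ and two points $p_{0},p_{1}\in X$ with $d(p_{0},p_{1})>4\epsilon_{0}$; this is possible since $X$ is a compact metric space that (being without isolated points) contains at least two points. Let $N=N(\epsilon_{0})$ be the gap length supplied by the specification property at scale $\epsilon_{0}$. Next I would organize the time axis into $s$ blocks, each a single shadowing instant separated from the next by a gap of length exactly $N$: for a word $\omega=(\omega_{1},\ldots,\omega_{s})\in\{0,1\}^{s}$, set $j_{m}=k_{m}=1+(m-1)(N+1)$ and apply specification with the prescribed points $x_{m}=p_{\omega_{m}}$ (using the surjective form (\ref{eq12}), so I may assume each $p_{\omega_{m}}$ has a suitable preimage along the orbit). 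This produces a point $x_{\omega}\in X$ whose orbit at time $j_{m}-1$ lies within $\epsilon_{0}$ of $p_{\omega_{m}}$ for each $m$.

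Then I would check separation. If $\omega\neq\omega'$ they differ in some coordinate $m$, so $d(f_{1}^{j_{m}-1}(x_{\omega}),p_{\omega_{m}})\leq\epsilon_{0}$ and $d(f_{1}^{j_{m}-1}(x_{\omega'}),p_{\omega'_{m}})\leq\epsilon_{0}$ while $d(p_{0},p_{1})>4\epsilon_{0}$; the triangle inequality gives $d(f_{1}^{j_{m}-1}(x_{\omega}),f_{1}^{j_{m}-1}(x_{\omega'}))>2\epsilon_{0}$, so the two points are $(n,2\epsilon_{0};f_{1,\infty})$-separated for $n=s(N+1)$. Hence $s_{n}(f_{1,\infty};2\epsilon_{0})\geq 2^{s}$ with $s=\lfloor n/(N+1)\rfloor$, which forces $\limsup_{n\to\infty}\frac{1}{n}\log s_{n}(f_{1,\infty};2\epsilon_{0})\geq\frac{\log 2}{N+1}>0$, and therefore $h_{\text{top}}(X,f_{1,\infty})>0$ by (\ref{eq4}).

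I expect the main obstacle to be bookkeeping the indices so that the shadowing windows are genuinely disjoint and the gaps satisfy $j_{m}-k_{m-1}\geq N$, together with correctly invoking the surjective reformulation (\ref{eq12}) so that the prescribed values $p_{\omega_{m}}$ are attained directly by the orbit of $x_{\omega}$ rather than by orbits of preimages. One must also verify that the estimate survives taking the limit $\epsilon\to 0$ in (\ref{eq4}): this is automatic here because positivity of the $\limsup$ at the single fixed scale $2\epsilon_{0}$ already lower-bounds the (nondecreasing as $\epsilon\to 0$) quantity defining $h_{\text{top}}$.
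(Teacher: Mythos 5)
Your proposal is correct and takes essentially the same route as the paper's own proof: both arguments fix two points at a definite distance apart, invoke specification over single-instant windows $j_m=k_m$ spaced by the specification gap, use the surjective reformulation to prescribe (via preimages under $f_1^{j_m-1}$) which of the two points the shadowing orbit visits at each window, and then count $2^s$ points separated at a fixed scale to obtain entropy at least $\log 2/(N+1)$, the paper getting $\log 2/N(\epsilon/2)$ by the identical mechanism. The differences are only in bookkeeping constants (distance $>4\epsilon_0$ with error $\epsilon_0$ versus distance $>2\epsilon$ with error $\epsilon/2$) and in window spacing $N+1$ versus $N$.
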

\begin{proof}
By relation (\ref{eq4}) we know that
$$h_{\text{top}}(X,f_{1,\infty})=\lim_{\epsilon\to 0}\limsup_{_{n\to\infty}}\dfrac{1}{n}
\log s_{n}(f_{1,\infty};\epsilon)$$
where the limit can be replaced by $\sup_{\epsilon>0}$. Hence, it is enough to prove that there exists $\epsilon>0$ small so that
$$\limsup_{_{n\to\infty}}\dfrac{1}{n}\log s_{n}(f_{1,\infty};\epsilon)>0.$$

Let $\epsilon>0$ be small and fixed so that there are at least two distinct $2\epsilon$-separated
points $x_{1},y_{1}\in X$, i.e. $d(x_{1},y_{1})>2\epsilon$ (note that $X$ has no any isolated point). Take $N(\frac{\epsilon}{2})\geq 1$ given by the specification property.
Moreover, take $j_{1}=k_{1}=1$, $j_{2}=k_{2}=N(\frac{\epsilon}{2})+1$ and consider preimages $x_{2}$ of $x_{1}$ and $y_{2}$ of $y_{1}$ under $f_{1}^{N(\frac{\epsilon}{2})}$, i.e. $f_{1}^{N(\frac{\epsilon}{2})}(x_{2})=x_{1}$ and $f_{1}^{N(\frac{\epsilon}{2})}(y_{2})=y_{1}$. By applying the specification property for pairs $(x_{1},x_{2})$, $(x_{1},y_{2})$, $(y_{1},x_{2})$ and $(y_{1},y_{2})$
there are points $x_{1,i}\in B(x_{1},\frac{\epsilon}{2})$ and
$y_{1,i}\in B(y_{1},\frac{\epsilon}{2})$, for $i=1, 2$, such that
\begin{center}
$f_{1}^{N(\frac{\epsilon}{2})}(x_{1,1}), f_{1}^{N(\frac{\epsilon}{2})}(y_{1,2})\in B(f_{1}^{N(\frac{\epsilon}{2})}(x_{2}),\frac{\epsilon}{2})=B(x_{1},\frac{\epsilon}{2}),$
\end{center}
\begin{center}
$f_{1}^{N(\frac{\epsilon}{2})}(x_{1,2}), f_{1}^{N(\frac{\epsilon}{2})}(y_{1,1})\in B(f_{1}^{N(\frac{\epsilon}{2})}(y_{2}),\frac{\epsilon}{2})=B(y_{1},\frac{\epsilon}{2})$.
\end{center}
It is clear that the set $\{x_{1,1},x_{1,2},y_{1,1},y_{1,2}\}$
is $(N(\frac{\epsilon}{2}),\epsilon;f_{1,\infty})$-separated. In particular, it follows
that $s_{N(\frac{\epsilon}{2})}(f_{1,\infty};\epsilon)\geq 2^{2}$.

Next, we take $j_{3}=k_{3}=2N(\frac{\epsilon}{2})+1$ and consider preimages $x_{3}$ of $x_{1}$ and $y_{3}$ of $y_{1}$ under $f_{1}^{2N(\frac{\epsilon}{2})}$, i.e. $f_{1}^{2N(\frac{\epsilon}{2})}(x_{3})=x_{1}$ and $f_{1}^{2N(\frac{\epsilon}{2})}(y_{3})=y_{1}$. By applying the specification property for triples $(x_{1},x_{2},x_{3})$, $(x_{1},x_{2},y_{3})$, $(x_{1},y_{2},x_{3})$, $(x_{1},y_{2},y_{3})$, $(y_{1},y_{2},y_{3})$, $(y_{1},y_{2},x_{3})$, $(y_{1},x_{2},y_{3})$ and $(y_{1},x_{2},x_{3})$,
there are the points $x_{1,j}\in B(x_{1},\frac{\epsilon}{2})$ and
$y_{1,j}\in B(y_{1},\frac{\epsilon}{2})$, $j=1, \ldots, 4$, for which the following hold:
\begin{enumerate}
\item [-] $f_{1}^{N(\frac{\epsilon}{2})}(x_{1,1}), f_{1}^{2N(\frac{\epsilon}{2})}(x_{1,1})\in B(x_{1},\frac{\epsilon}{2}),$ $f_{1}^{N(\frac{\epsilon}{2})}(x_{1,4}), \text{and} \ f_{1}^{2N(\frac{\epsilon}{2})}(x_{1,4})\in B(y_{1},\frac{\epsilon}{2});$
\item [-] $f_{1}^{N(\frac{\epsilon}{2})}(x_{1,2})\in B(x_{1},\frac{\epsilon}{2}), \ \text{and} \ f_{1}^{2N(\frac{\epsilon}{2})}(x_{1,2})\in B(y_{1},\frac{\epsilon}{2});$
\item [-] $f_{1}^{N(\frac{\epsilon}{2})}(x_{1,3})\in B(y_{1},\frac{\epsilon}{2}), \ \text{and} \ f_{1}^{2N(\frac{\epsilon}{2})}(x_{1,3})\in B(x_{1},\frac{\epsilon}{2});$
\item [-] $f_{1}^{N(\frac{\epsilon}{2})}(y_{1,1}), f_{1}^{2N(\frac{\epsilon}{2})}(y_{1,1})\in B(y_{1},\frac{\epsilon}{2}),$ $f_{1}^{N(\frac{\epsilon}{2})}(y_{1,4}), \text{and} \ f_{1}^{2N(\frac{\epsilon}{2})}(y_{1,4})\in B(x_{1},\frac{\epsilon}{2});$
\item [-] $f_{1}^{N(\frac{\epsilon}{2})}(y_{1,2})\in B(y_{1},\frac{\epsilon}{2}) \ \text{and} \ f_{1}^{2N(\frac{\epsilon}{2})}(y_{1,2})\in B(x_{1},\frac{\epsilon}{2});$
  \item [-] $f_{1}^{N(\frac{\epsilon}{2})}(y_{1,3})\in B(x_{1},\frac{\epsilon}{2})\ \text{and} \ f_{1}^{2N(\frac{\epsilon}{2})}(y_{1,3})\in B(y_{1},\frac{\epsilon}{2}).$
\end{enumerate}
It is clear that the set $\{x_{1,1},x_{1,2},x_{1,3},x_{1,4},y_{1,1},y_{1,2},y_{1,3},y_{1,4}\}$
is $(2N(\frac{\epsilon}{2}),\epsilon;f_{1,\infty})$-separated. In particular, it follows
that $s_{2N(\frac{\epsilon}{2})}(f_{1,\infty};\epsilon)\geq 2^{3}$.

Now, let $n=dN(\frac{\epsilon}{2})+1$ where $d\in\mathbb{N}$. Take
$j_{1}=k_{1}=1, j_{2}=k_{2}=N(\frac{\epsilon}{2})+1, j_{3}=k_{3}=2N(\frac{\epsilon}{2})+1,\ldots, j_{d}=k_{d}=(d-1)N(\frac{\epsilon}{2})+1, j_{d+1}=k_{d+1}=dN(\frac{\epsilon}{2})+1$
and consider preimages $x_{i}$ of $x_{1}$ and $y_{i}$ of $y_{1}$ under $f_{1}^{(i-1)N(\frac{\epsilon}{2})}$ for $i=2,\ldots,d+1$, i.e. $f_{1}^{(i-1)N(\frac{\epsilon}{2})}(x_{i})=x_{1}$ and $f_{1}^{(i-1)N(\frac{\epsilon}{2})}(y_{i})=y_{1}$. By repeating the previous reasoning for $(d+1)$-tuples in which the $i$th
component choosing from the set $\{x_{i},y_{i}\}$, it follows
that $s_{dN(\frac{\epsilon}{2})}(f_{1,\infty};\epsilon)\geq 2^{d+1}$. Thus,
\begin{eqnarray*}
\limsup_{n\to\infty}\frac{1}{n}\log s_{n}(f_{1,\infty};\epsilon)
&\geq &\limsup_{d\to\infty}\frac{1}{dN(\frac{\epsilon}{2})}\log s_{dN(\frac{\epsilon}{2})}(f_{1,\infty};\epsilon)\\
&\geq & \limsup_{d\to\infty}\frac{1}{dN(\frac{\epsilon}{2})}\log 2^{d+1}=\dfrac{\log 2}{N(\frac{\epsilon}{2})}.
\end{eqnarray*}
This proves that the topological entropy is positive and finishes the proof of the theorem.
\end{proof}
The following corollary is an immediate consequence of Theorem \ref{theorem4} and Theorem \ref{theorem3}, that is a criterion under which an NDS does not have the specification property.
\begin{corollary}\label{corollary3}
Let $(X, f_{1,\infty})$ be an NDS of surjective maps on a compact metric space $X$ without any isolated point. If $h_{\text{top}}(X, f_{i,\infty})=0$ for some $i\geq 1$, then the NDS $(X, f_{1,\infty})$ does not have the specification property.
\end{corollary}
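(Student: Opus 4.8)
The plan is to establish the contrapositive by directly combining Theorems \ref{theorem3} and \ref{theorem4}, arguing by contradiction. So I would begin by assuming that the NDS $(X, f_{1,\infty})$ does satisfy the specification property and aim to derive a contradiction with the hypothesis that $h_{\text{top}}(X, f_{i,\infty})=0$ for some $i\geq 1$. The first thing to check is that the three standing hypotheses of the corollary---surjectivity of the maps $f_n$, compactness of $X$, and the absence of isolated points---are precisely those required by both earlier theorems, so that both are applicable verbatim to the system $(X, f_{1,\infty})$.

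Granting the specification assumption, I would first invoke Theorem \ref{theorem4} to obtain $h_{\text{top}}(X, f_{1,\infty})>0$. Next I would invoke Theorem \ref{theorem3}, one of whose conclusions is the equality $h_{\text{top}}(X, f_{1,\infty})=h_{\text{top}}(X, f_{k,\infty})$ valid for every $k\geq 1$. Specializing this equality to $k=i$, where $i$ is the index supplied by the hypothesis, yields $h_{\text{top}}(X, f_{1,\infty})=h_{\text{top}}(X, f_{i,\infty})=0$. This directly contradicts the strict positivity furnished by Theorem \ref{theorem4}, and the contradiction is reached on the same quantity $h_{\text{top}}(X, f_{1,\infty})$. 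Hence the assumption is untenable and $(X, f_{1,\infty})$ cannot have the specification property.

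Since the argument is purely a matter of assembling the two preceding theorems, I do not expect any genuine obstacle: all the substantive work has already been carried out in the proofs of Theorems \ref{theorem3} and \ref{theorem4}. The only points deserving a moment's care are to confirm that the cross-shift equality in Theorem \ref{theorem3} is quantified over \emph{all} $k\geq 1$ (so that it covers the particular index $i$ appearing in the hypothesis), and that the positivity conclusion of Theorem \ref{theorem4} indeed refers to the base system $(X, f_{1,\infty})$ rather than to a shifted copy, so that the two conclusions genuinely collide.
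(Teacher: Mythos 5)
Your proposal is correct and is exactly the argument the paper intends: the corollary is stated as an immediate consequence of Theorems \ref{theorem4} and \ref{theorem3}, obtained by assuming specification, deriving $h_{\text{top}}(X, f_{1,\infty})>0$ from Theorem \ref{theorem4}, and then using the equality $h_{\text{top}}(X, f_{1,\infty})=h_{\text{top}}(X, f_{k,\infty})$ for all $k\geq 1$ from Theorem \ref{theorem3} with $k=i$ to reach a contradiction. Your checks on the matching hypotheses and on the quantification over $k$ are appropriate but raise no issues.
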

As a direct consequence of Theorem \ref{theorem4} and \cite[Lemma 4.5]{KS} we have the next corollary which says that every NDS of surjective maps on a compact metric space with the specification property is topologically chaotic.
\begin{corollary}\label{corollary1}
Let $(X, f_{1,\infty})$ be an NDS of surjective maps on a compact metric space $X$ without any isolated point. If the NDS $(X, f_{1,\infty})$ satisfies the specification property, then it has positive asymptotical topological entropy. In particular, it is topologically chaotic.
\end{corollary}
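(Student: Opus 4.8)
The plan is to assemble the statement from Theorem \ref{theorem4} together with the monotonicity of topological entropy under time-shifting recorded in \cite[Lemma 4.5]{KS}; no genuinely new argument is required. First I would recall that, by definition, the asymptotical topological entropy is the limit
\[
h^{*}(X,f_{\infty})=\lim_{n\to\infty}h_{\text{top}}(X, f_{n,\infty}),
\]
so it suffices to show this limit is strictly positive.

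The key step is to produce a uniform positive lower bound for the tail entropies $h_{\text{top}}(X, f_{n,\infty})$. By Theorem \ref{theorem4}, the standing hypotheses---surjectivity, absence of isolated points, and the specification property---already guarantee $h_{\text{top}}(X, f_{1,\infty})>0$. Then \cite[Lemma 4.5]{KS}, which yields $h_{\text{top}}(X, f_{1,\infty})\leq h_{\text{top}}(X, f_{n,\infty})$ for every $n\geq 1$, promotes this single positive value into a lower bound valid along the entire shifted sequence. Combining the two gives
\[
h^{*}(X,f_{\infty})=\lim_{n\to\infty}h_{\text{top}}(X, f_{n,\infty})\geq h_{\text{top}}(X, f_{1,\infty})>0,
\]
so $h^{*}(X,f_{\infty})>0$ and the NDS is topologically chaotic by definition. (Alternatively, Theorem \ref{theorem3} directly furnishes the equality $h_{\text{top}}(X, f_{1,\infty})=h^{*}(X,f_{\infty})$, after which Theorem \ref{theorem4} finishes immediately; routing through \cite[Lemma 4.5]{KS} simply avoids invoking the full entropy-point machinery.)

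Rather than any real obstacle, the only point to watch is the direction of the inequality in \cite[Lemma 4.5]{KS}: I need topological entropy to be nondecreasing under shifting, so that positivity at the base time propagates to all later times. It is worth stressing that I do \emph{not} need the shifted systems $(X, f_{n,\infty})$ themselves to enjoy the specification property; the single estimate $h_{\text{top}}(X, f_{1,\infty})>0$ combined with the monotonicity inequality suffices.
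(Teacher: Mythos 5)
Your proposal is correct and matches the paper's own argument: the paper derives this corollary exactly as "a direct consequence of Theorem \ref{theorem4} and \cite[Lemma 4.5]{KS}," i.e., positivity of $h_{\text{top}}(X,f_{1,\infty})$ from specification plus the monotonicity $h_{\text{top}}(X,f_{1,\infty})\leq h_{\text{top}}(X,f_{n,\infty})$ under time-shifting, which forces $h^{*}(X,f_{\infty})>0$. Your remark that the shifted systems need not themselves satisfy specification, and the alternative route via Theorem \ref{theorem3}, are both consistent with the paper.
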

As we have seen before, for NDSs of surjective maps on a compact metric space with the specification property,
local neighborhoods reflect the complexity of the entire dynamical system from the viewpoint
of entropy theory and asymptotical topological entropy. Also, by Theorem \ref{theorem4}, every NDS of surjective maps on a compact metric space with the specification property has positive topological entropy. Thus, by Theorem \ref{theorem3}, for NDSs of surjective maps on a compact metric space with the specification property, local neighborhoods have positive topological entropy. More precisely, we have the following corollary.
\begin{corollary}
Let $(X, f_{1,\infty})$ be an NDS of surjective maps on a compact metric space $X$ without any isolated point. If the NDS $(X, f_{1,\infty})$ satisfies the specification property, then $h_{\text{top}}(cl(U), f_{1,\infty})>0$ for any $x\in X$ and any open neighborhood $U$ of $x$.
\end{corollary}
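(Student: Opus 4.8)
The plan is to combine the two principal results of this section, Theorem \ref{theorem3} and Theorem \ref{theorem4}, both of which apply verbatim under the hypotheses of the corollary: an NDS of surjective maps on a compact metric space $X$ without isolated points that satisfies the specification property. Indeed, every assumption of the corollary is exactly the common hypothesis shared by those two theorems, so no additional work is needed to set them up.

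First I would invoke Theorem \ref{theorem3}. Since $(X, f_{1,\infty})$ satisfies the specification property, that theorem guarantees that every point of $X$ is an entropy point. Unwinding the definition of an entropy point, this means precisely that for the given $x \in X$ and any open neighborhood $U$ of $x$ we have the equality $h_{\text{top}}(cl(U), f_{1,\infty}) = h_{\text{top}}(X, f_{1,\infty})$. Next I would invoke Theorem \ref{theorem4}, which under the identical hypotheses yields $h_{\text{top}}(X, f_{1,\infty}) > 0$. Chaining these two facts gives $h_{\text{top}}(cl(U), f_{1,\infty}) = h_{\text{top}}(X, f_{1,\infty}) > 0$, which is exactly the assertion.

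Since both cited theorems are already proved and their hypotheses coincide word for word with those of the corollary, there is no genuine obstacle: the statement is an immediate logical consequence of the two theorems rather than an independent argument. The only point meriting a moment's care is to verify that the hypotheses match precisely—surjectivity of the maps, compactness of $X$, absence of isolated points, and the specification property—so that Theorem \ref{theorem3} and Theorem \ref{theorem4} can each be applied without modification; this verification is immediate. Accordingly I would present the proof in two short lines, citing Theorem \ref{theorem3} for the equality of entropies on $cl(U)$ and on $X$, and Theorem \ref{theorem4} for the positivity of the latter.
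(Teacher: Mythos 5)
Your proposal is correct and coincides with the paper's own reasoning: the paper derives this corollary precisely by combining Theorem \ref{theorem3} (every point is an entropy point, so $h_{\text{top}}(cl(U), f_{1,\infty}) = h_{\text{top}}(X, f_{1,\infty})$) with Theorem \ref{theorem4} (positivity of $h_{\text{top}}(X, f_{1,\infty})$). No gaps; nothing further is needed.
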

\section{Ruelle-expanding NDSs}\label{section4}
The main aim of this section is to introduce a special class of NDSs having the specification property. More precisely, we introduce uniformly Ruelle-expanding NDSs and provide some sufficient conditions that these systems have the specification property.
\begin{definition}[Ruelle-expanding map]
Based on \cite{DR2}, we say that a continuous onto transformation $f :X\to X$ on a compact metric space $(X,d)$ is a \emph{Ruelle-expanding map} if it is both open and expanding. Recall that $f$ is said to be an \emph{expanding map} if there exist constants $\sigma>1$ and $\rho>0$ such that for every $p\in X$ the image of the ball $B(p, \rho)$ contains a neighborhood of the closure of $B(f(p),\rho)$ and
\begin{equation}\label{exp444}
 d(f(x), f(y))\geq\sigma d(x, y)\ \text{for every} \ x, y \in B(p,\rho).
\end{equation}
\end{definition}
Let $f:X\to X$ be a Ruelle-expanding map on a compact metric space $(X,d)$. Based on \cite{VO}, the restriction of $f$ to each ball $B(p,\rho)$ of radius $\rho$ is injective and its image contains the closure of $B(f(p),\rho)$. Thus, the restriction to $B(p,\rho)\cap f^{-1}(B(f(p),\rho))$ is a homeomorphism onto $B(f(p),\rho)$. We denote by
$$h_{p}:B(f(p),\rho)\to B(p,\rho)$$
its inverse and call it \emph{inverse branch} of $f$ at $p$. It is clear that $h_{p}(f(p))=p$
and $f\circ h_{p}=\text{id}$. The condition (\ref{exp444}) implies that $h_{p}$ is a $\sigma^{-1}$-contraction:
\begin{equation*}
 d(h_{p}(z),h_{p}(w))\leq \sigma^{-1}d(z,w)\ \text{for every} \ z,w \in B(f(p),\rho).
\end{equation*}
The factors $\sigma$ and $\rho$ will be called the \emph{expansion factor} and \emph{injectivity constant} of the  Ruelle-expanding map $f$, respectively.

Note that, one-sided Markov subshifts of finite type, determined by aperiodic square matrices with entries in $\{0,1\}$, and $C^{1}$ expanding maps on compact manifolds are examples of Ruelle-expanding maps. Moreover, if the domain of a Ruelle-expanding map is connected, then it is topologically mixing and topologically exact, see \cite{VO,DR2}.

Now, we introduce a certain class of NDSs that will be studied in the present section.
\begin{definition}[Uniformly Ruelle-expanding NDS]
Let $(X, f_{1,\infty})$ be an NDS of Ruelle-expanding maps $f_{n}$ with expansion factors $\sigma_{n}$ and injectivity constants $\rho_{n}$. We say that the NDS $(X, f_{1,\infty})$ is a \emph{uniformly Ruelle-expanding NDS} if there exist constants $\sigma>1$ and $\rho>0$ such that $\sigma_{n}>\sigma$ and $\rho_{n}>\rho$ for every $n\geq 1$.
The factors $\sigma$ and $\rho$ will be called \emph{uniform expansion factor} and \emph{uniform injectivity constant} of the uniformly Ruelle-expanding NDS $(X, f_{1,\infty})$, respectively.
\end{definition}
In what follows, we consider a uniformly Ruelle-expanding NDS $(X, f_{1,\infty})$ with uniform expansion factor $\sigma$ and injectivity constant $\rho$. By definition, for every $i$, the restriction of $f_i$ to
each ball $B(x, \rho)$ of radius $\rho$ is injective and its image contains the closure of $B(f_i(x), \rho)$.
Thus, the restriction $f_i$ to
$B(x,\rho) \cap f_i^{-1}(B(f_i(x), \rho))$ is a homeomorphism onto $B(f_i(x), \rho)$, we denote by
\begin{equation*}
 h_{i,x}:B(f_i(x), \rho)\to B(x,\rho)
\end{equation*}
the inverse branch of $f_i$ at $x$. It is clear that $h_{i,x}(f_i(x)) = x$ and
$f_i \circ h_{i,x} =id$. By the above statements $h_{i,x}$ is
a $\sigma^{-1}$-contraction:
\begin{equation}\label{eq5}
d(h_{i,x}(z),h_{i,x}(w))\leq\sigma^{-1}d(z,w)\ \ \text{for every}\ \ z,w\in B(f_i(x), \rho).
\end{equation}

More generally, for any $k,n \geq 1$, we call the inverse branch of $f_{k}^{n}$ at $x$ the composition
\begin{equation*}
 h_{k,x}^{n}:= h_{k,x} \circ h_{k+1,f_{k}^{1}(x)} \circ h_{k+2,f_{k}^{2}(x)} \circ\cdots\circ h_{k+n-1,f_{k}^{n-1}(x)}: B(f_{k}^{n}(x), \rho) \to B(x,\rho).
 \end{equation*}
Observe that $h_{k,x}^{n} (f_{k}^{n}(x)) = x$ and $f_{k}^{n} \circ h_{k,x}^{n}= id$. Moreover, for each
$0 \leq j \leq n$ we have
\begin{center}
$f_{k}^{j} \circ h_{k,x}^{n} = h_{k+j,f_{k}^{j}(x)}^{n-j}\ \ \text{and} \ \ h_{k+j,f_{k}^{j}(x)}^{n-j}: B(f_{k}^{n}(x), \rho) \to B(f_{k}^{j}(x),\rho)$.
\end{center}
Hence,
\begin{equation}\label{in}
 d(f_{k}^{j} \circ h_{k,x}^{n} (z), f_{k}^{j} \circ h_{k,x}^{n} (w)) \leq \sigma^{j-n}d(z,w)
\end{equation}
for every $z,w \in B(f_{k}^{n}(x), \rho)$ and every $0 \leq j \leq n$.

In the rest of this section, we provide some sufficient conditions that uniformly Ruelle-expanding NDSs on compact metric spaces satisfy the specification property. Thus, by Theorems \ref{theorem4} and \ref{theorem3}, any uniformly Ruelle-expanding NDS with these conditions on a compact metric space has positive topological entropy and all points are entropy point; in particular, this system is topologically chaotic. We mention that our result about the positivity of topological entropy extends the already result known by \cite{K2}. In \cite{K2} the author considers uniformly expanding NDSs built from $C^{2}$ expanding maps on a compact Riemannian manifold $M$ with uniform bounds on expansion factors and derivatives; however, in this article, we consider the topological version of uniformly expanding NDSs.
\subsection{Specification and topologically exact property}
Our aim of this subsection is to prove that any uniformly Ruelle-expanding NDS with the topologically exact property enjoys the specification property (Theorem \ref{theorem1} ). We need the following auxiliary lemma in the proof of Theorem \ref{theorem1} below.
\begin{lemma}\label{lem0}
Let $(X, f_{1,\infty})$ be a uniformly Ruelle-expanding NDS with uniform expansion factor $\sigma$ and injectivity constant $\rho$. Then for every $x \in X$, $k\in\mathbb{N}, n\geq 0$ and $0< \epsilon \leq \rho$ we have $f_{k}^{n}(B(x,k,n,\epsilon))=B(f_{k}^{n}(x),\epsilon)$, where $B(x,k,n,\epsilon)$ is the dynamical $(n+1)$-ball with initial time $k$ around $x$ of radius $\epsilon$ given by (\ref{01}).
\end{lemma}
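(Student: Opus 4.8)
The plan is to prove the set equality by establishing the two inclusions separately, with the reverse inclusion carrying essentially all of the content. For the inclusion $f_{k}^{n}(B(x,k,n,\epsilon)) \subseteq B(f_{k}^{n}(x),\epsilon)$, I would simply read off the $i=n$ term in the definition of the Bowen metric: if $y \in B(x,k,n,\epsilon)$ then $d_{k,n}(x,y) < \epsilon$, which in particular gives $d(f_{k}^{n}(x), f_{k}^{n}(y)) < \epsilon$, so $f_{k}^{n}(y) \in B(f_{k}^{n}(x),\epsilon)$. This direction requires nothing beyond the definitions.

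For the reverse inclusion $B(f_{k}^{n}(x),\epsilon) \subseteq f_{k}^{n}(B(x,k,n,\epsilon))$, the idea is to pull an arbitrary $z \in B(f_{k}^{n}(x),\epsilon)$ back through the inverse branch. Since $\epsilon \leq \rho$, the point $z$ lies in $B(f_{k}^{n}(x),\rho)$, the domain of the inverse branch $h_{k,x}^{n}$, so I can set $y := h_{k,x}^{n}(z)$; then $f_{k}^{n}(y) = f_{k}^{n} \circ h_{k,x}^{n}(z) = z$ by the identity $f_{k}^{n} \circ h_{k,x}^{n} = \mathrm{id}$, and it remains only to verify that $y \in B(x,k,n,\epsilon)$.

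To check $y \in B(x,k,n,\epsilon)$ I would invoke the contraction estimate (\ref{in}) with the second argument taken to be $w = f_{k}^{n}(x)$. Because $h_{k,x}^{n}(f_{k}^{n}(x)) = x$, we have $f_{k}^{i} \circ h_{k,x}^{n}(w) = f_{k}^{i}(x)$ for each $0 \leq i \leq n$, so (\ref{in}) yields
\[
d(f_{k}^{i}(x), f_{k}^{i}(y)) = d\big(f_{k}^{i} \circ h_{k,x}^{n}(w),\, f_{k}^{i} \circ h_{k,x}^{n}(z)\big) \leq \sigma^{i-n}\, d(z, f_{k}^{n}(x)).
\]
Since $\sigma > 1$ and $i \leq n$ we have $\sigma^{i-n} \leq 1$, while $d(z, f_{k}^{n}(x)) < \epsilon$ because $z$ lies in the \emph{open} ball; hence $d(f_{k}^{i}(x), f_{k}^{i}(y)) \leq \sigma^{i-n}\, d(z,f_{k}^{n}(x)) \leq d(z, f_{k}^{n}(x)) < \epsilon$ for every $0 \leq i \leq n$. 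Taking the maximum over $i$ gives $d_{k,n}(x,y) < \epsilon$, that is $y \in B(x,k,n,\epsilon)$, and therefore $z = f_{k}^{n}(y) \in f_{k}^{n}(B(x,k,n,\epsilon))$.

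The only place demanding care — and the step I expect to be the main obstacle — is this reverse inclusion, for two reasons: first, the well-definedness of the pullback, which is exactly where the hypothesis $\epsilon \leq \rho$ is used to keep $z$ inside the domain $B(f_{k}^{n}(x),\rho)$ of $h_{k,x}^{n}$; and second, preserving strictness of the inequality, where the chain $\sigma^{i-n}\, d(z, f_{k}^{n}(x)) \leq d(z, f_{k}^{n}(x)) < \epsilon$ is what upgrades the contraction bound to the strict bound required by the open dynamical ball.
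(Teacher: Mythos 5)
Your proof is correct and follows essentially the same route as the paper's: the forward inclusion is read off from the definition of the Bowen metric, and the reverse inclusion pulls the point back through the inverse branch $h_{k,x}^{n}$ (using $\epsilon\leq\rho$ for well-definedness) and applies the contraction estimate (\ref{in}) to get the strict bound $d(f_{k}^{i}(x),f_{k}^{i}(y))\leq\sigma^{i-n}d(z,f_{k}^{n}(x))<\epsilon$. The only difference is notational (your $y$ and $z$ play the roles of the paper's $z$ and $y$), so there is nothing to add.
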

\begin{proof}
Let $B(x,k,n,\epsilon)$ be a dynamical $(n+1)$-ball with initial time $k$ around $x$. We prove
that $f_{k}^{n}(B(x,k,n,\epsilon))=B(f_{k}^{n}(x),\epsilon)$. The
inclusion $f_{k}^{n}(B(x,k,n,\epsilon))\subseteq B(f_{k}^{n}(x),\epsilon)$ is an immediate consequence of the definition of dynamical ball. To prove the converse, consider the inverse
branch $h_{k,x}^{n}: B(f_{k}^{n}(x), \rho) \to B(x,\rho)$.
Given any $y \in B(f_{k}^{n}(x),\epsilon)$, let $z = h_{k,x}^{n}(y)$. Then, $f_{k}^{n}(z) = y$ and,
by  relation (\ref{in}) we have
$$d(f_{k}^{j}(z), f_{k}^{j}(x)) \leq \sigma^{j-n}d(f_{k}^{n}(z), f_{k}^{n}(x)) \leq
d(y, f_{k}^{n}(x)) < \epsilon $$
for every $0 \leq j \leq n$. This shows that $z \in B(x, k, n, \epsilon)$ and finishes
the proof of the lemma.
\end{proof}
\begin{definition}[Topologically exact property]
An NDS $(X, f_{1,\infty})$ has the \emph{topologically exact property} if for any $\delta>0$ there is $N=N(\delta)\in\mathbb{N}$ so that $f_{k}^{n}(B(x,\delta))=X$ for every $x\in X$, $k\geq 1$ and $n\geq N$.
\end{definition}
\begin{theorem}\label{theorem1}
Let $(X, f_{1,\infty})$ be a uniformly Ruelle-expanding NDS on a compact metric space $X$ with uniform expansion factor $\sigma$ and injectivity constant $\rho$. If the NDS $(X, f_{1,\infty})$ satisfies the topologically exact property, then it has the specification property.
\end{theorem}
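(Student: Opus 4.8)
The plan is to construct a single shadowing point by a backward induction over the orbit pieces, using the topologically exact property to bridge the gaps between consecutive pieces and the contraction of the inverse branches (inequality (\ref{in})) to control the error inside each piece. Since establishing the shadowing requirement for all sufficiently small $\epsilon$ yields it for every $\epsilon$, I may assume without loss of generality that $0<\epsilon\leq\rho$, and I set $N:=N(\epsilon/2)$, the constant furnished by the topologically exact property for $\delta=\epsilon/2$; this will serve as the specification constant. Writing $a_m:=j_m-1$ and $b_m:=k_m-1$, the shadowing requirement (\ref{eq11}) for piece $m$ is equivalent to $f_1^{a_m}(x)\in V_m$, where $V_m:=B(x_m,j_m,k_m-j_m,\epsilon)$ is the dynamical ball of piece $m$, since $f_1^{j_m+i-1}(x)=f_{j_m}^{i}(f_1^{a_m}(x))$. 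The gap condition $j_{m+1}-k_m\geq N$ says precisely that the map carrying time $b_m$ to time $a_{m+1}$, namely $f_{k_m}^{\,j_{m+1}-k_m}$, consists of at least $N\geq N(\epsilon/2)$ steps.

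The induction produces targets $y_s,y_{s-1},\ldots,y_1$ with $y_m\in V_m$ and $f_{j_m}^{\,a_{m+1}-a_m}(y_m)=y_{m+1}$; taking $x:=y_1$ then gives $f_1^{a_m}(x)=y_m\in V_m$ for all $m$ by an immediate chaining (using $a_m+1=j_m$), which is exactly (\ref{eq11}). I start with $y_s:=x_s\in V_s$. For the inductive step, given $y_{m+1}$, I bridge the gap first: because $f_{k_m}^{\,j_{m+1}-k_m}$ has at least $N(\epsilon/2)$ iterates, the topologically exact property yields $f_{k_m}^{\,j_{m+1}-k_m}(B(g_m,\epsilon/2))=X$, where $g_m:=f_{j_m}^{\,k_m-j_m}(x_m)$ is the endpoint of piece $m$; hence there is $q_m\in B(g_m,\epsilon/2)$ with $f_{k_m}^{\,j_{m+1}-k_m}(q_m)=y_{m+1}$. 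Then I pull $q_m$ back through piece $m$ via the inverse branch $h_{j_m,x_m}^{\,k_m-j_m}$, which is defined on $B(g_m,\rho)\supseteq B(g_m,\epsilon/2)$ as $\epsilon\leq\rho$, and set $y_m:=h_{j_m,x_m}^{\,k_m-j_m}(q_m)$, so that $f_{j_m}^{\,k_m-j_m}(y_m)=q_m$ and therefore $f_{j_m}^{\,a_{m+1}-a_m}(y_m)=y_{m+1}$.

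It remains to verify $y_m\in V_m$, and this is where inequality (\ref{in}) does the work: applying it with $k=j_m$, $n=k_m-j_m$, base point $x_m$, and the two points $q_m$ and $g_m=f_{j_m}^{\,n}(x_m)$, and noting $h_{j_m,x_m}^{\,n}(g_m)=x_m$, gives $d\big(f_{j_m}^{\,j}(y_m),f_{j_m}^{\,j}(x_m)\big)\leq\sigma^{\,j-n}d(q_m,g_m)\leq(\epsilon/2)\sigma^{\,j-n}\leq\epsilon/2$ for every $0\leq j\leq n$, since $\sigma>1$. Thus $y_m\in V_m$, closing the induction. The main obstacle is the coordination of the two mechanisms: the bridging preimage $q_m$ must be produced inside a ball of radius $\epsilon/2$, comfortably below the injectivity scale $\rho$, so that after it is pulled back through the whole piece by the contracting inverse branch the resulting orbit still remains within $\epsilon$ of piece $m$ at every intermediate instant; keeping the time indices aligned throughout the backward recursion (the gap maps $f_{k_m}^{\,\cdot}$ versus the piece maps $f_{j_m}^{\,\cdot}$, and the identities $a_m=j_m-1$, $b_m=k_m-1$) is the delicate bookkeeping, whereas the surjectivity from exactness and the contraction from uniform expansion are each individually routine.
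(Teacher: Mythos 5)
Your proof is correct and takes essentially the same approach as the paper's: a backward induction over the orbit pieces, using the topologically exact property to bridge each gap and the contraction of inverse branches to pull points back through each piece --- your explicit computation with inequality (\ref{in}) is precisely the content of the paper's Lemma \ref{lem0}, which the paper invokes instead. The only cosmetic difference is that you center the dynamical balls at $x_m$ and verify condition (\ref{eq11}) directly, whereas the paper centers them at $f_{1}^{j_m-1}(x_m)$ and obtains the equivalent condition (\ref{eq12}).
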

\begin{proof}
The proof of the theorem can be followed from the topologically exact property and the previous lemma. Fix $\delta>0$, without loss of generality we assume that $\delta<\rho$. Let $N=N(\delta)$ be given by the topologically exact property. Now, suppose that the points $x_{1},x_{2},\ldots,x_{s}\in X$ with $s\geq 2$ and a sequence $1=j_{1}\leq k_{1}<j_{2}\leq k_{2}<\cdots<j_{s}\leq k_{s}$ of integers with
$j_{n}-k_{n-1}\geq N$ for $n=2,\ldots,s$ are given. By Lemma \ref{lem0} we have
\begin{equation}\label{eq1}
f_{j_{i}}^{k_{i}-j_{i}}\big(B(f_{1}^{j_{i}-1}(x_{i}),j_{i},k_{i}-j_{i},\delta)\big)=B(f_{j_{i}}^{k_{i}-j_{i}}(f_{1}^{j_{i}-1}(x_{i})),\delta)\ \text{for}\ 1\leq i\leq s
\end{equation}
and by the topologically exact property we have
\begin{equation}\label{eq2}
f_{k_{i}}^{j_{i+1}-k_{i}}\big(B(f_{j_{i}}^{k_{i}-j_{i}}(f_{1}^{j_{i}-1}(x_{i})),\delta)\big)=X\ \text{for}\ i=1,\ldots,s-1.
\end{equation}

Equations (\ref{eq1}) and (\ref{eq2}) implies that
for given $\bar{x}_{s}\in B(f_{1}^{j_{s}-1}(x_{s}),j_{s},k_{s}-j_{s},\delta)$, one
has $\bar{x}_{s}=f_{k_{s-1}}^{j_{s}-k_{s-1}}(\tilde{x}_{s-1})$,
with $\tilde{x}_{s-1}\in B(f_{j_{s-1}}^{k_{s-1}-j_{s-1}}(f_{1}^{j_{s-1}-1}(x_{s-1})),\delta)$, and
then $\bar{x}_{s}=f_{k_{s-1}}^{j_{s}-k_{s-1}}\circ f_{j_{s-1}}^{k_{s-1}-j_{s-1}}(\bar{x}_{s-1})$, for some $\bar{x}_{s-1}\in B(f_{1}^{j_{s-1}-1}(x_{s-1}),j_{s-1},k_{s-1}-j_{s-1},\delta)$. By induction, there
exists $\bar{x}_{1}\in B(f_{1}^{j_{1}-1}(x_{1}),j_{1},k_{1}-j_{1},\delta)$, such that
\begin{equation}\label{eq3}
\bar{x}_{i}=f_{k_{i-1}}^{j_{i}-k_{i-1}}\circ f_{j_{i-1}}^{k_{i-1}-j_{i-1}}\circ\cdots\circ f_{k_{1}}^{j_{2}-k_{1}}\circ f_{j_{1}}^{k_{1}-j_{1}}(\bar{x}_{1})\ \text{for}\ i=2,\ldots,s.
\end{equation}
Now, by equation (\ref{eq3}) it is enough to take $x=\bar{x}_{1}$, then $x$ satisfies the definition of specification property.
\end{proof}
This theorem is related to \cite[Theorem 16]{RV} which deals with $C^{1}$ expanding maps. As a direct consequence of Theorems \ref{theorem1}, \ref{theorem4}, \ref{theorem3} and Corollary \ref{corollary1} one can conclude the following corollary.
\begin{corollary}
Let $(X, f_{1,\infty})$ be a uniformly Ruelle-expanding NDS on a compact metric space $X$. If the NDS $(X, f_{1,\infty})$ satisfies the topologically exact property, then it has positive topological entropy and all points are entropy point. In particular, it is topologically chaotic.
\end{corollary}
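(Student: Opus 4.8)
The plan is to recognize that this statement is a direct assembly of the results already established, with one hypothesis that must be recovered from the standing assumptions. Concretely, I would verify the two structural premises that Theorems \ref{theorem4} and \ref{theorem3} and Corollary \ref{corollary1} demand—namely that $(X, f_{1,\infty})$ is an NDS of surjective maps and that $X$ carries no isolated point—and then simply chain the implications. The surjectivity premise is immediate: each $f_n$ is by definition a Ruelle-expanding map, hence a continuous onto transformation, so $(X, f_{1,\infty})$ is an NDS of surjective maps.

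The only step requiring genuine argument is to establish that $X$ has no isolated point (assuming, as is implicit, that $X$ is not a single point). I would argue by contradiction: suppose $p \in X$ is isolated, so that $B(p,\delta_0) = \{p\}$ for some $\delta_0 > 0$. Choosing any $\delta \leq \delta_0$ and letting $N = N(\delta)$ be furnished by the topologically exact property, one obtains
\begin{equation*}
f_1^N\big(B(p,\delta)\big) = f_1^N(\{p\}) = \{f_1^N(p)\} = X,
\end{equation*}
which forces $X$ to be a singleton, contrary to our nondegeneracy assumption. Hence $X$ has no isolated point. I expect this to be the main—indeed the only—obstacle, since the corollary's statement silently omits the \emph{without any isolated point} clause that the cited theorems require; the content of the step is precisely the observation that topological exactness on a nondegenerate compact space renders that clause automatic.

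With both premises in hand, the conclusion follows by assembling the cited results, with no further estimates needed. First, Theorem \ref{theorem1} converts the topologically exact property into the specification property. Then, applying the three results for NDSs of surjective maps on a compact metric space without isolated points satisfying the specification property: Theorem \ref{theorem4} gives $h_{\text{top}}(X, f_{1,\infty}) > 0$, Theorem \ref{theorem3} gives that every point of $X$ is an entropy point, and Corollary \ref{corollary1} gives that the system is topologically chaotic. This completes the proof.
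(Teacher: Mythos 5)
Your proposal is correct and follows the same route as the paper, which derives this corollary by simply chaining Theorem \ref{theorem1} (uniform Ruelle-expansion plus topological exactness gives specification) with Theorems \ref{theorem4}, \ref{theorem3} and Corollary \ref{corollary1}; the paper offers no argument beyond citing these results. What you add, and what the paper passes over in silence, is the verification that the hypotheses of the cited results actually hold: surjectivity is indeed automatic because a Ruelle-expanding map is by definition a continuous onto transformation, and---more substantively---the \emph{without any isolated point} clause required by Theorems \ref{theorem4}, \ref{theorem3} and Corollary \ref{corollary1} is absent from the corollary's hypotheses. Your contradiction argument (an isolated point $p$ gives $B(p,\delta)=\{p\}$, so topological exactness forces $X=\{f_1^N(p)\}$) correctly shows that this clause is automatic once $X$ is not a single point. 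This also pinpoints a degenerate caveat the paper ignores: on a one-point space the NDS is trivially uniformly Ruelle-expanding and topologically exact, yet has zero entropy, so the corollary as literally stated fails there; your nondegeneracy assumption is not cosmetic but is exactly the condition under which the statement is true. In short: same decomposition as the paper, but your version patches a real, if minor, gap in the paper's hypotheses.
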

In the following proposition we introduce a spacial class of uniformly Ruelle-expanding NDSs with the topologically exact property, and so they satisfy the specification property. An NDS $(X, f_{1,\infty})$ is said to be \emph{eventually periodic} if $f_{\ell+k+n}=f_{\ell+n}$ for some $k,\ell\geq 1$ and every $n\geq 1$.
\begin{proposition}
Every eventually periodic uniformly Ruelle-expanding NDS on a compact connected metric space has the topologically exact property, and so it satisfies the specification property.
\end{proposition}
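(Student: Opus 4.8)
The plan is to verify the topologically exact property directly; the specification property then follows immediately from Theorem \ref{theorem1}. Fix $\delta>0$ and assume without loss of generality that $\delta\le\rho$. The first step is to show that small balls blow up to the injectivity scale $\rho$ after a number of iterates that is uniform in the base point and the initial time. Using the composed inverse branch $h_{k,x}^{n}\colon B(f_{k}^{n}(x),\rho)\to B(x,\rho)$, which is a $\sigma^{-n}$-contraction, for any $y\in B(f_{k}^{n}(x),\rho)$ the point $z=h_{k,x}^{n}(y)$ satisfies $f_{k}^{n}(z)=y$ and $d(z,x)\le\sigma^{-n}d(y,f_{k}^{n}(x))<\sigma^{-n}\rho$. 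Hence if $n\ge n_{0}(\delta):=\lceil\log(\rho/\delta)/\log\sigma\rceil$ we get $z\in B(x,\delta)$, so $f_{k}^{n}(B(x,\delta))\supseteq B(f_{k}^{n}(x),\rho)$ for every $x\in X$ and every $k\ge1$. The same computation with $\delta=\rho$ shows the scale $\rho$ is preserved: $f_{k}^{m}(B(x,\rho))\supseteq B(f_{k}^{m}(x),\rho)$ for all $m\ge0$. It therefore suffices to produce a single $N_{1}$, uniform in the base point and initial time, for which every ball of radius $\rho$ is mapped \emph{onto} $X$ after $N_{1}$ steps.

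For this I would exploit the eventual periodicity. Writing $f_{\ell+k+n}=f_{\ell+n}$ for all $n\ge1$, only finitely many distinct maps occur, and for each phase $1\le j\le k$ the full-period composition $g_{j}:=f_{\ell+j}^{k}$ is an \emph{autonomous} map with $f_{\ell+j}^{rk}=g_{j}^{\,r}$ by periodicity. Each $g_{j}$ is again Ruelle-expanding: its inverse branch is precisely the composed branch $h_{\ell+j,\,\cdot}^{k}$ constructed before Lemma \ref{lem0}, defined on balls of radius $\rho$ and contracting by $\sigma^{-k}$, while $g_{j}$ is open as a composition of open maps. Since $X$ is compact and connected, the cited property of connected Ruelle-expanding maps (topological exactness) applies to each $g_{j}$: there is $R_{j}$ with $g_{j}^{\,r}(B(q,\rho))=X$ for all $q\in X$ and $r\ge R_{j}$. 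Put $R=\max_{1\le j\le k}R_{j}$.

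It remains to assemble these pieces with uniform control over the initial time $k'$, including the transient range $k'\le\ell$. Given any $k'$ and $q$, I would first run $t_{0}\le\ell$ steps to enter the periodic regime; by the preservation property the image still contains a ball $B(p,\rho)$ with $p=f_{k'}^{t_{0}}(q)$ and new initial time $k''=k'+t_{0}>\ell$ in some phase $j\in\{1,\dots,k\}$. Applying $g_{j}^{\,r}=f_{k''}^{rk}$ with $r\ge R$ gives $f_{k''}^{rk}(B(p,\rho))=X$, and any remaining $s<k$ steps preserve this by surjectivity of the $f_{n}$. Thus $f_{k'}^{m}(B(q,\rho))=X$ for all $m\ge N_{1}:=\ell+(R+1)k$, uniformly in $k'$ and $q$, and combining with Step 1 yields the topologically exact property with $N(\delta)=n_{0}(\delta)+N_{1}$. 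The main obstacle I anticipate is not the expansion estimate but the passage from a ball of radius $\rho$ to all of $X$: unlike the growth up to scale $\rho$, this step is not driven by the expansion factor and genuinely requires connectedness, which enters through the topological exactness of the period maps $g_{j}$; the secondary nuisance is the bookkeeping needed to make $N_{1}$ uniform over every initial time, especially across the finite transient $f_{1},\dots,f_{\ell}$.
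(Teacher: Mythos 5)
Your proposal is correct and follows essentially the same route as the paper's proof: reduce to the full-period compositions $g_j=f_{\ell+j}^{k}$, observe that they are Ruelle-expanding (the paper cites \cite[Lemma 6.6]{MCFBRPV} for this), invoke the topological exactness of autonomous Ruelle-expanding maps on compact connected metric spaces \cite[Corollary 11.2.16]{VO}, assemble a uniform constant over all initial times, and conclude the specification property via Theorem \ref{theorem1}. Your bookkeeping is in fact somewhat more careful than the paper's, which works only with the single phase-one composition $g=f_{\ell+1}^{k}$ and the constant $N_{\delta}=kN_{g,\delta}+\ell$, whereas your per-phase maps $g_j$ and the preliminary blow-up of $\delta$-balls to the scale $\rho$ make the uniformity over all phases explicit.
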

\begin{proof}
Let $(X, f_{1,\infty})$ be a eventually periodic uniformly Ruelle-expanding NDS on a compact connected metric space $X$, i.e. $f_{\ell+k+n}=f_{\ell+n}$ for some $k,\ell\geq 1$ and every $n\geq 1$. Then, by uniform continuity, it is not hard to see that the finite composition $g:=f_{\ell+1}^{k}=f_{\ell+k}\circ\cdots\circ f_{\ell+2}\circ f_{\ell+1}$ is also Ruelle-expanding. Indeed, $g$ is Ruelle-expanding with expansion factor $\sigma_{g}$ and injectivity constant $\rho_{g}$, where
\begin{equation*}
\sigma_{g}=\min_{\ell+1\leq i\leq\ell+k}\{\sigma_{i}\}\ \ \text{and}\ \ \rho_{g}=\min_{\ell+1\leq i\leq\ell+k}\{\rho_{i}\}
\end{equation*}
and $\sigma_{i}$, $\rho_{i}$ are the expansion factor and injectivity constant of Ruelle-expanding map $f_{i}$, respectively, see \cite[Lemma 6.6]{MCFBRPV} for more details. On the other hand, by \cite[Corollary 11.2.16]{VO}, each Ruelle-expanding map on a compact connected metric space is topologically exact. Hence, Ruelle-expanding map $g$ is topologically exact, i.e. for any $\delta>0$ there is $N_{g,\delta}\geq 1$ such that $g^{n}(B(x,\delta))=X$ for every $x\in X$ and every $n\geq N_{g,\delta}$. So the  eventually periodic uniformly Ruelle-expanding NDS $(X, f_{1,\infty})$ satisfies the topologically exact property; for any $\delta>0$, it is enough to take $N_{\delta}=kN_{g,\delta}+\ell$. Now, by the approach used in Theorem \ref{theorem1}, we deduce that the  eventually periodic uniformly Ruelle-expanding NDS $(X, f_{1,\infty})$ satisfies the specification property.
\end{proof}
\begin{remark}\label{remark000}
Let $M$ be a compact Riemannian manifold and $f : M \to M$ be a $C^{1}$ local diffeomorphism, then $f$ is
said to be $C^1$-\emph{expanding} if there exist $\sigma>1$ and some Riemannian metric on $M$ such that
$\|Df(x)v\|\geq \sigma\|v\|$, for every $x \in M$ and every vector $v$ tangent to $M$ at the point $x$.
On the other hand, every $C^1$-expanding map on a manifold is Ruelle-expanding, see \cite[Example 11.2.1]{VO}.
Now, let $(M,f_{1,\infty})$ be a uniformly expanding NDS composed of $C^{1}$-expanding local diffeomorphisms on a compact connected Riemannian manifold $M$. Then, by applying the approach used in \cite[Lemma 18]{RV} or
\cite[Lemma 11.1.13]{VO}, it follows that the NDS $(M,f_{1,\infty})$ has the topologically exact property; hence, it satisfies the specification property.
\end{remark}
\subsection{Specification and topologically mixing property}
In this subsection we prove that any uniformly Ruelle-expanding NDS enjoys the shadowing property (Proposition \ref{proposition5}). Using this fact, we show that every uniformly Ruelle-expanding NDSs with the topologically mixing property satisfy the specification property (Theorem \ref{theorem00}).
\begin{definition}[Shadowing property]
Fix an NDS $(X, f_{1,\infty})$, then
\begin{itemize}
\item
a finite or an infinite sequence $\{x_{1},x_{2},x_{3},\ldots\}\subseteq X$ is called a $\delta$\emph{-pseudo orbit} for some $\delta>0$, if $d(f_{i}(x_{i}),x_{i+1})<\delta$, for all $i\geq 1$.
\item
NDS $(X, f_{1,\infty})$ has the \emph{shadowing property} if for every $\epsilon>0$ there exists $\delta=\delta(\epsilon)>0$ such that for every $\delta$-pseudo orbit $\{x_{1},x_{2},x_{3},\ldots\}$ there exists $x\in X$ such that for all $i\geq1$, $d(f_{1}^{i-1}(x),x_{i})<\epsilon$. In this case we say that the
point $x\in X$ shadows the sequence $\{x_{1},x_{2},x_{3},\ldots\}$.
\end{itemize}
\end{definition}
\begin{definition}[Expansivity]
An NDS $(X, f_{1,\infty})$ is called \emph{expansive} if there exists $\delta>0$ (called expansivity constant) such that for any $x,y\in X$ with $x\neq y$, $d(f_{1}^{n}(x),f_{1}^{n}(y))>\delta$ for some $n\geq 1$. Equivalently, if for $x,y\in X$, $d(f_{1}^{n}(x),f_{1}^{n}(y))\leq\delta$ for all $n\geq 0$, then $x=y$.
\end{definition}
\begin{lemma}\label{lemma2}
Every uniformly Ruelle-expanding NDS is expansive.
\end{lemma}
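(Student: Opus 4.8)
The plan is to take the expansivity constant to be any $\delta$ with $0<\delta<\rho$ and to show that whenever two points have forward orbits that remain $\delta$-close for all times, the uniform contraction of the inverse branches $h_{1,x}^{n}$ forces the points to coincide. So suppose $x,y\in X$ satisfy $d(f_{1}^{n}(x),f_{1}^{n}(y))\leq\delta$ for every $n\geq 0$. Since $\delta<\rho$, at each time $i$ the point $f_{1}^{i}(y)$ lies in $B(f_{1}^{i}(x),\rho)$, so the inverse branch $h_{1,x}^{n}:B(f_{1}^{n}(x),\rho)\to B(x,\rho)$ may legitimately be applied to $f_{1}^{n}(y)$ for every $n$.

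The key step is to verify that $h_{1,x}^{n}(f_{1}^{n}(y))=y$. I would argue by descending induction on the factors of the composition $h_{1,x}^{n}=h_{1,x}\circ h_{2,f_{1}^{1}(x)}\circ\cdots\circ h_{n,f_{1}^{n-1}(x)}$. Applying the last factor $h_{n,f_{1}^{n-1}(x)}$ to $f_{1}^{n}(y)=f_{n}(f_{1}^{n-1}(y))$, I use that $f_{n}$ is injective on $B(f_{1}^{n-1}(x),\rho)$ and that $f_{1}^{n-1}(y)$ is a preimage of $f_{1}^{n}(y)$ lying in this ball, since $d(f_{1}^{n-1}(x),f_{1}^{n-1}(y))\leq\delta<\rho$; hence the inverse branch returns exactly $f_{1}^{n-1}(y)$. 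Iterating this argument down through the remaining factors recovers $y$, while $h_{1,x}^{n}(f_{1}^{n}(x))=x$ holds by construction.

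It then remains to conclude using the contraction estimate. Applying relation (\ref{in}) with $k=1$ and $j=0$ to the points $z=f_{1}^{n}(x)$ and $w=f_{1}^{n}(y)$ yields
$$d(x,y)=d\big(h_{1,x}^{n}(f_{1}^{n}(x)),h_{1,x}^{n}(f_{1}^{n}(y))\big)\leq\sigma^{-n}d\big(f_{1}^{n}(x),f_{1}^{n}(y)\big)\leq\sigma^{-n}\delta.$$
Letting $n\to\infty$ and using $\sigma>1$ forces $d(x,y)=0$, that is $x=y$, which is exactly expansivity with constant $\delta$.

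The main obstacle is the second step: confirming that the composed inverse branch $h_{1,x}^{n}$ actually sends $f_{1}^{n}(y)$ back to $y$ rather than to some spurious preimage. This is precisely where the uniform injectivity constant $\rho$ is indispensable, since the hypothesis $\delta<\rho$ guarantees that the entire finite orbit segment of $y$ stays inside the domains of the successive inverse branches, so that the injectivity of each $f_{i}$ on its $\rho$-ball pins down the correct preimage at every stage. Once this is secured, the uniform expansion $\sigma>1$ delivers the conclusion automatically.
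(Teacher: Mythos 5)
Your proof is correct and follows essentially the same route as the paper: both arguments apply the composed inverse branch along one orbit to the nearby orbit and use the contraction estimate (\ref{in}) with $j=0$ to force $d(x,y)\leq\sigma^{-n}\cdot\text{const}\to 0$. The only differences are cosmetic --- the paper uses $\rho$ itself as the expansivity constant and applies $h_{1,y}^{n}$ to the orbit of $x$, asserting the identity $x=h_{1,y}^{n}(f_{1}^{n}(x))$ without the inductive verification that you spell out explicitly.
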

\begin{proof}
Let $(X, f_{1,\infty})$ be a uniformly Ruelle-expanding NDS with uniform expansion factor $\sigma$ and injectivity constant $\rho$. Assume that $d(f_{1}^{n}(x),f_{1}^{n}(y))\leq\rho$ for every $n\geq 0$. This implies that
$x=h_{1,y}^{n}(f_{1}^{n}(x))$ for every $n\geq 0$. Then, the relation (\ref{in}) gives that
$$d(x,y)\leq\sigma^{-n}d(f_{1}^{n}(x),f_{1}^{n}(y))\leq\sigma^{-n}\rho.$$
Making $n\to\infty$, we get that $x=y$. So, $\rho$ is a constant of expansivity for uniformly Ruelle-expanding NDS $(X, f_{1,\infty})$.
\end{proof}
It is clear that every orbit is a $\delta$-pseudo orbit, for every $\delta>0$. For uniformly Ruelle-expanding NDSs we have a kind of converse: every pseudo-orbit is uniformly close to (we say that it is \emph{shadowed} by) some true orbit of the NDS, see Proposition \ref{proposition5}.

At the same time, Castro, Rodrigues and Varandas assert that (\cite[Lemma 4.2]{CRVV} and \cite[Lemma 3.1]{CRV}) each sequence of Ruelle-expanding maps $f_n: X_n \to X_{n+1}$ on complete (or locally compact) metric spaces $X_n$ with a uniform contraction rates (along the inverse branches) satisfies the Lipschitz shadowing and shadowing properties; however, we have not found any proof. Here, for completion, we give a proof for shadowing property of uniformly Ruelle-expanding NDSs.
\begin{proposition}\label{proposition5}
Let $(X, f_{1,\infty})$ be a uniformly Ruelle-expanding NDS with uniform expansion factor $\sigma$ and injectivity constant $\rho$. Then, the NDS $(X, f_{1,\infty})$ satisfies the shadowing property. If in the definition of shadowing property $\epsilon$ is small enough, so that $2\epsilon$ is a constant of expansivity for the NDS $(X, f_{1,\infty})$, then the point $x$ is unique.
\end{proposition}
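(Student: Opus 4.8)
The plan is to establish the shadowing property by correcting finite initial segments of a given pseudo-orbit through the contracting inverse branches $h_{i,x}$, then extracting a shadowing point by compactness of $X$; uniqueness will follow at once from expansivity (Lemma~\ref{lemma2}). First I would fix $\epsilon>0$ and choose $\delta=\delta(\epsilon)>0$ small enough that $\tfrac{\sigma\delta}{\sigma-1}<\rho$ and $\tfrac{\delta}{\sigma-1}<\epsilon$ simultaneously, for instance $\delta<\min\{\rho(\sigma-1)/\sigma,\ \epsilon(\sigma-1)\}$. Let $\{x_i\}_{i\geq1}$ be an arbitrary $\delta$-pseudo orbit, so $d(f_i(x_i),x_{i+1})<\delta$ for all $i\geq1$.

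Then, for each fixed $n$, I would correct the finite segment $x_1,\dots,x_n$ by backward iteration: set $u_n:=x_n$ and $u_i:=h_{i,x_i}(u_{i+1})$ for $i=n-1,\dots,1$. Writing $a_i:=d(u_i,x_i)$, the identity $x_i=h_{i,x_i}(f_i(x_i))$, the $\sigma^{-1}$-contraction (\ref{eq5}), and the pseudo-orbit bound give the recursion $a_i<\sigma^{-1}(a_{i+1}+\delta)$ with $a_n=0$; summing the resulting geometric series yields $a_i<\delta/(\sigma-1)$ uniformly in $i$ and $n$. Consequently $d(u_{i+1},f_i(x_i))\leq a_{i+1}+\delta<\sigma\delta/(\sigma-1)<\rho$, so $u_{i+1}$ lies in the domain $B(f_i(x_i),\rho)$ of $h_{i,x_i}$ and the backward construction is legitimate. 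Putting $z^{(n)}:=u_1$ and using $f_i\circ h_{i,x_i}=\mathrm{id}$, one gets $f_1^{i-1}(z^{(n)})=u_i$, whence $d(f_1^{i-1}(z^{(n)}),x_i)<\delta/(\sigma-1)<\epsilon$ for every $1\leq i\leq n$.

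To pass from finite to infinite shadowing I would use compactness of $X$: let $x$ be a limit of a convergent subsequence of $(z^{(n)})_n$. For each fixed $i$ and all sufficiently large terms of the subsequence, $d(f_1^{i-1}(z^{(n)}),x_i)<\delta/(\sigma-1)$, so continuity of $f_1^{i-1}$ gives $d(f_1^{i-1}(x),x_i)\leq\delta/(\sigma-1)<\epsilon$; thus $x$ shadows $\{x_i\}$, proving the shadowing property. For uniqueness, if $x,x'$ both $\epsilon$-shadow $\{x_i\}$, the triangle inequality gives $d(f_1^{n}(x),f_1^{n}(x'))\leq2\epsilon$ for all $n\geq0$; when $2\epsilon$ is a constant of expansivity—which holds for small $\epsilon$, since $\rho$ is one by Lemma~\ref{lemma2} and any smaller positive constant is also an expansivity constant—expansivity forces $x=x'$.

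The main obstacle is verifying that the entire backward chain of inverse branches stays well-defined \emph{uniformly in} $n$: this is precisely what the geometric-series estimate $a_i<\delta/(\sigma-1)$ and the resulting constraint $\sigma\delta/(\sigma-1)<\rho$ secure, and it is exactly the step where the \emph{uniform} expansion factor and \emph{uniform} injectivity constant are indispensable (a non-uniform bound would let the accumulated error exceed $\rho$ and break the construction).
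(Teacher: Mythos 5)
Your proof is correct, and it runs on the same engine as the paper's: pulling the pseudo-orbit back through the $\sigma^{-1}$-contracting inverse branches $h_{i,x_i}$ of (\ref{eq5}), with $\delta$ calibrated against $\epsilon$ and $\rho$, and deducing uniqueness from expansivity (Lemma~\ref{lemma2}). The difference lies in how compactness is deployed. The paper works with sets rather than points: choosing $\delta$ with $\sigma^{-1}\epsilon+\delta<\epsilon$, it proves the nesting inclusion $h_{n,x_{n}}(\mathrm{cl}(B(f_{n}(x_{n}),\epsilon)))\subseteq \mathrm{cl}(B(f_{n-1}(x_{n-1}),\epsilon))$, so that the compact sets $K_{n}:=h_{1,x_{1}}\circ\cdots\circ h_{n,x_{n}}(\mathrm{cl}(B(f_{n}(x_{n}),\epsilon)))$ decrease, and any point $x\in\bigcap_{n}K_{n}$ satisfies $d(f_{1}^{n-1}(x),x_{n})\leq\sigma^{-1}\epsilon<\epsilon$ outright; the nesting also makes well-definedness of the compositions automatic, and no limit extraction or continuity argument is needed. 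You instead correct finite segments by the backward recursion $u_{i}=h_{i,x_{i}}(u_{i+1})$, bound the error by the geometric series $a_{i}<\delta/(\sigma-1)$, and pass to a subsequential limit; this costs an extra compactness-plus-continuity step, and it obliges you to establish the error bound and the well-definedness of each $h_{i,x_{i}}(u_{i+1})$ simultaneously, as a single backward induction (your constraint $\sigma\delta/(\sigma-1)<\rho$ is exactly what closes that induction, so this is fine). What your version buys is an explicit linear estimate: the shadowing distance is at most $\delta/(\sigma-1)$, i.e.\ a Lipschitz shadowing bound, which is the stronger property the paper only cites from Castro--Rodrigues--Varandas without proof. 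The uniqueness arguments are identical.
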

\begin{proof}
Let $\epsilon>0$ be given, without loss of generality we assume that $\epsilon<\rho$. Fix $\delta=\delta(\epsilon)>0$ so that $\sigma^{-1}\epsilon+\delta<\epsilon$. Consider a $\delta$-pseudo orbit $\{x_{1},x_{2},x_{3},\ldots\}\subseteq X$. For each $n\geq 1$, let $h_{n,x_{n}}:B(f_{n}(x_{n}),\rho)\rightarrow B(x_{n},\rho)$ be the inverse branch of $f_{n}$ at $x_{n}$. The relation (\ref{eq5}) ensures that
\begin{equation}\label{eq7}
h_{n,x_{n}}(\text{cl}(B(f_{n}(x_{n}),\epsilon)))\subseteq \text{cl}(B(x_{n},\sigma^{-1}\epsilon))\ \ \text{for every}\ \ n\geq 1.
\end{equation}
Since $d(x_{n},f_{n-1}(x_{n-1}))<\delta$ and $\sigma^{-1}\epsilon+\delta<\epsilon$, it follows that
\begin{equation}\label{eq6}
h_{n,x_{n}}(\text{cl}(B(f_{n}(x_{n}),\epsilon)))\subseteq \text{cl}(B(f_{n-1}(x_{n-1}),\epsilon))\ \ \text{for every}\ \ n\geq 1,
\end{equation}
because for $z\in h_{n,x_{n}}(\text{cl}(B(f_{n}(x_{n}),\epsilon)))$ we have
\begin{center}
$d(z,f_{n-1}(x_{n-1}))\leq d(z,x_{n})+d(x_{n},f_{n-1}(x_{n-1}))\leq\sigma^{-1} \epsilon+\delta<\epsilon.$
\end{center}

We may consider the composition $h^{n}=h_{1,x_{1}}\circ\cdots\circ h_{n-1,x_{n-1}}\circ h_{n,x_{n}}$ of inverse branches, then by (\ref{eq6}), the compact subsets $K_{n}:=h^{n}(\text{cl}(B(f_{n}(x_{n}),\epsilon)))$ are nested. Take $x$ in the intersection. For every $n\geq 1$, we have that $x\in K_{n}$ and so $f_{1}^{n-1}(x)$ belongs to
\begin{center}
$f_{1}^{n-1}(K_{n})=f_{1}^{n-1}\circ h^{n}(\text{cl}(B(f_{n}(x_{n}),\epsilon)))=h_{n,x_{n}}(\text{cl}(B(f_{n}(x_{n}),\epsilon))).$
\end{center}
By this fact and (\ref{eq7}), one has that $d(f_{1}^{n-1}(x),x_{n})\leq\sigma^{-1}\epsilon<\epsilon$ for every $n\geq 1$. Consequently, the NDS $(X, f_{1,\infty})$ satisfies the shadowing property.

Now, let $\epsilon$ be small enough, so that $2\epsilon$ is a constant of expansivity for NDS $(X, f_{1,\infty})$.
If $x^{\prime}$ is another point as in the conclusion of the proposition, then
\begin{center}
$d(f_{1}^{n-1}(x),f_{1}^{n-1}(x^{\prime}))\leq d(f_{1}^{n-1}(x),x_{n})+d(x_{n},f_{1}^{n-1}(x^{\prime}))<2\epsilon\ \ \text{for every}\ \ n\geq 1.$
\end{center}
Since $2\epsilon$ is a constant of expansivity for NDS $(X, f_{1,\infty})$, it follows that $x=x^{\prime}$.
This finishes the proof of the theorem.
\end{proof}
\begin{definition}[Topologically mixing property]
An NDS $(X, f_{1,\infty})$ has the \emph{topologically mixing property} if for any two non-empty open sets $U,V\subseteq X$ there exists $N\geq 1$ so that $f_{k}^{n}(U)\cap V\neq\emptyset$ for every $n\geq N$ and $k\geq 1$.
\end{definition}
Note that in \cite{MR}, the authors introduced some kind of specification for NDSs, the so-called specification property on the iterative way, which differs from Definition \ref{SP} in general, and deduced that topologically mixing property and shadowing imply the specification property on the iterative way \cite[Theorem 2.2]{MR}. In the case that $(X, f_{1,\infty})$ is an NDS of surjective maps on a compact metric space $X$, these two kinds of the specification property are the same. The next theorem is a counterpart of \cite[Theorem 2.2]{MR} to our setting.
\begin{theorem}\label{theorem00}
Let $(X, f_{1,\infty})$ be a uniformly Ruelle-expanding NDS on a compact metric space $X$. If the
NDS $(X, f_{1,\infty})$ satisfies the topologically mixing property, then it has the specification property.
\end{theorem}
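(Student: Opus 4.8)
The plan is to realise an arbitrary admissible family of orbit segments as a single $\delta$-pseudo orbit and then to straighten it into a genuine orbit via the shadowing property proved in Proposition \ref{proposition5}; the topologically mixing property is used only to bridge the gaps between consecutive segments. Since every Ruelle-expanding map is onto, our system is an NDS of surjective maps, so I may verify the specification property in the form (\ref{eq12}). First I would fix $\epsilon>0$ and take $\delta=\delta(\epsilon)>0$ from the shadowing property.

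The key preliminary step is to extract a \emph{uniform} gluing time $N=N(\epsilon)$. By compactness I would pick a finite $\tfrac{\delta}{2}$-net $\{z_1,\dots,z_p\}$ of $X$, and for each ordered pair $(a,b)$ apply the topologically mixing property to $B(z_a,\tfrac{\delta}{2})$ and $B(z_b,\tfrac{\delta}{2})$, obtaining $N_{a,b}\geq 1$ with $f_\kappa^{\,n}\big(B(z_a,\tfrac{\delta}{2})\big)\cap B(z_b,\tfrac{\delta}{2})\neq\emptyset$ for all $n\geq N_{a,b}$ and \emph{all} initial times $\kappa\geq 1$; the uniformity in $\kappa$ is exactly what the NDS mixing definition supplies. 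Putting $N:=1+\max_{a,b}N_{a,b}$, I record the gluing statement: for all $u,v\in X$, all $\kappa\geq 1$ and all $g\geq N$ there is $w\in X$ with $d(w,u)<\delta$ and $d\big(f_\kappa^{\,g-1}(w),v\big)<\delta$, found by choosing net points $z_a,z_b$ with $u\in B(z_a,\tfrac{\delta}{2})$ and $v\in B(z_b,\tfrac{\delta}{2})$ and applying mixing with $n=g-1\geq N_{a,b}$.

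Given data $x_1,\dots,x_s$ and times $1=j_1\leq k_1<\cdots<j_s\leq k_s$ with $j_m-k_{m-1}\geq N$, I would then assemble a sequence $(y_i)$. On each window $j_m\leq i\leq k_m$ I set $y_i:=f_1^{i-1}(x_m)$, a genuine orbit segment, so the pseudo-orbit defect is $0$ there. On each gap I splice in a genuine bridge orbit $y_{k_m+t}:=f_{k_m+1}^{\,t-1}(w_m)$ for $t=1,\dots,g_m-1$, where $g_m:=j_{m+1}-k_m\geq N$ and $w_m$ is the gluing point obtained above for $u=f_{k_m}(y_{k_m})=f_1^{k_m}(x_m)$, $v=f_1^{j_{m+1}-1}(x_{m+1})=y_{j_{m+1}}$ and initial time $\kappa=k_m+1$. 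The only possible mismatches occur at the two ends of each bridge, and there the defects equal $d(u,w_m)<\delta$ and $d\big(f_{k_m+1}^{\,g_m-1}(w_m),v\big)<\delta$; hence $(y_i)$ is a genuine $\delta$-pseudo orbit (extended to an infinite one, if desired, by continuing with a true orbit past time $k_s$).

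Finally, Proposition \ref{proposition5} furnishes $x\in X$ with $d\big(f_1^{i-1}(x),y_i\big)<\epsilon$ for every $i$; on the windows this reads $d\big(f_1^{i-1}(x),f_1^{i-1}(x_m)\big)<\epsilon$ for $j_m\leq i\leq k_m$, i.e.\ precisely (\ref{eq12}), establishing specification. The hard part is the second step: a single $N$ must work for \emph{every} configuration, and this hinges on combining the initial-time uniformity intrinsic to the NDS mixing property with a finite-net argument that tames the continuum of bridge endpoints $(u,v)$. Beyond this, the Ruelle-expanding hypothesis enters only to invoke Proposition \ref{proposition5} and to guarantee surjectivity, which is what licenses the use of (\ref{eq12}).
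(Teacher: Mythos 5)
Your proof is correct and takes essentially the same route as the paper: the paper gives no written proof of Theorem \ref{theorem00}, instead combining Proposition \ref{proposition5} (shadowing for uniformly Ruelle-expanding NDSs) with the cited result \cite[Theorem 2.2]{MR} that topological mixing plus shadowing yields specification in the iterative sense, which coincides with Definition \ref{SP} for NDSs of surjective maps. Your argument is exactly this shadowing-plus-mixing scheme written out in full, and the details --- in particular the finite $\frac{\delta}{2}$-net argument extracting a uniform gluing time $N(\epsilon)$ from the initial-time-uniform NDS mixing property, and the pseudo-orbit bookkeeping across windows and bridges --- are sound.
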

As a direct consequence of Theorems \ref{theorem00}, \ref{theorem4}, \ref{theorem3} and Corollary \ref{corollary1} one can conclude the following corollary.
\begin{corollary}
Let $(X, f_{1,\infty})$ be a uniformly Ruelle-expanding NDS on a compact metric space $X$. If the NDS $(X, f_{1,\infty})$ satisfies the topologically mixing property, then it has positive topological entropy and all points are entropy point. In particular, it is topologically chaotic.
\end{corollary}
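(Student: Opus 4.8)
The plan is to adapt the classical Bowen argument ``topologically mixing $+$ shadowing $\Rightarrow$ specification'' to the non-autonomous setting, feeding the shadowing property of Proposition \ref{proposition5} with a pseudo-orbit built from the prescribed data by using the topologically mixing property to fill the gaps. Since every Ruelle-expanding map is onto, the NDS $(X,f_{1,\infty})$ consists of surjective maps, so by the Remark following Definition \ref{SP} it suffices to produce, for any specification data, a point $x$ satisfying condition (\ref{eq12}).

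First I would fix $\epsilon>0$ and let $\delta=\delta(\epsilon)>0$ be the constant furnished by the shadowing property. The next step is to extract a uniform ``connecting time'' from the mixing hypothesis: by compactness, cover $X$ by finitely many balls $B(z_1,\delta/2),\ldots,B(z_p,\delta/2)$, apply the topologically mixing property to each ordered pair $\big(B(z_i,\delta/2),B(z_l,\delta/2)\big)$ to obtain integers $N_{i,l}$, and set $N_0:=\max_{i,l}N_{i,l}$. The triangle inequality then yields the key connecting lemma: for all $a,b\in X$, every $k\ge 1$ and every $n\ge N_0$ there is $c\in X$ with $d(c,a)<\delta$ and $d(f_k^n(c),b)<\delta$. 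I would then declare $N(\epsilon):=N_0+1$.

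Next I would assemble a $\delta$-pseudo-orbit $(p_i)_{i\ge 1}$ from the given points $x_1,\ldots,x_s$ and times $1=j_1\le k_1<\cdots<j_s\le k_s$ with $j_n-k_{n-1}\ge N(\epsilon)$. On the $m$-th window I set $p_i:=f_1^{i-1}(x_m)$ for $j_m\le i\le k_m$, so there $(p_i)$ is a genuine orbit segment and the errors vanish. Across the $m$-th gap I invoke the connecting lemma with $a=f_{k_m}(p_{k_m})=f_1^{k_m}(x_m)$, $b=p_{j_{m+1}}=f_1^{j_{m+1}-1}(x_{m+1})$, initial time $k=k_m+1$, and $n=(j_{m+1}-k_m)-1\ge N_0$, obtaining $c$; I place $p_{k_m+1}:=c$ and $p_{k_m+1+t}:=f_{k_m+1}^{t}(c)$ for $0\le t\le (j_{m+1}-k_m)-2$, and finally extend $(p_i)$ past the last window by continuing the genuine orbit of $x_s$. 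A short index check shows that every transition $d(f_i(p_i),p_{i+1})$ is exact except at the two endpoints of each gap, where the connecting lemma forces the error below $\delta$; hence $(p_i)$ is an honest $\delta$-pseudo-orbit. Applying Proposition \ref{proposition5} produces $x\in X$ with $d(f_1^{i-1}(x),p_i)<\epsilon$ for all $i$, and restriction to the windows gives $d(f_1^{i-1}(x),f_1^{i-1}(x_m))<\epsilon$ for $j_m\le i\le k_m$, which is precisely (\ref{eq12}).

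I expect the main obstacle to be twofold. First, one must establish the uniform connecting time $N_0$ that works simultaneously for all pairs of points and, crucially, for all initial times $k$; this is exactly where compactness (finite subcover) and the $k$-uniformity built into the NDS mixing definition must be combined. Second, the index bookkeeping has to be done carefully to guarantee that the concatenated sequence is a genuine $\delta$-pseudo-orbit with errors localized only at the gap endpoints. Once these two points are secured, the shadowing property of Proposition \ref{proposition5} converts the pseudo-orbit into the single orbit demanded by the specification property.
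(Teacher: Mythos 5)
Your proposal is correct, and it actually does more work than the paper does for this statement. The paper's entire proof of the corollary is the sentence that it is a direct consequence of Theorems \ref{theorem00}, \ref{theorem4}, \ref{theorem3} and Corollary \ref{corollary1}; and the key ingredient, Theorem \ref{theorem00} (uniformly Ruelle-expanding $+$ topologically mixing implies specification), is itself stated in the paper \emph{without proof}, justified only as a counterpart of Theorem 2.2 of \cite{MR} together with the observation that for surjective NDSs the iterative and non-iterative notions of specification coincide. Your argument supplies exactly that missing proof: the reduction to condition (\ref{eq12}) is legitimate because Ruelle-expanding maps are by definition onto; the compactness argument with the balls $B(z_i,\delta/2)$ correctly converts the $k$-uniformity built into the paper's definition of topologically mixing into a single connecting time $N_0$ valid for all pairs of points and all initial times; the index bookkeeping of the concatenated pseudo-orbit checks out (transitions are exact inside windows and inside gaps, and are $<\delta$ at the two endpoints of each gap, the constraint $j_{m+1}-k_m\geq N_0+1$ giving precisely the connecting length $n=j_{m+1}-k_m-1\geq N_0$); and Proposition \ref{proposition5} then produces the shadowing point, whose restriction to the windows is (\ref{eq12}). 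After that, the conclusions of the corollary follow, as in the paper, from Theorems \ref{theorem4}, \ref{theorem3} and Corollary \ref{corollary1}, a chaining step you leave implicit but which is identical to the paper's. The trade-off is clear: the paper's route is a two-line citation resting on an unproved theorem and an external reference, while yours is self-contained and makes visible where compactness and the uniformity in $k$ of the mixing hypothesis are genuinely used. (One caveat you inherit from the paper rather than create: Theorems \ref{theorem4} and \ref{theorem3} assume $X$ has no isolated points, which the corollary does not state; this is essentially harmless, since the mixing property excludes isolated points except in the degenerate case where $X$ is a single point.)
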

\begin{remark}
Note that uniformly Ruelle-expanding NDSs with the topologically exact property satisfy the topologically mixing property. Hence, Theorem \ref{theorem1} is a direct consequence of Theorem \ref{theorem00}. But we have given the proof, because its approach is necessary in some applications, see Section \ref{applications}.
\end{remark}
\section{Topological pressure}\label{section5}
For NDSs, the notion of topological pressure was introduced by
Huang $et\ al.$ \cite{HWZ} and Kawan \cite{K2}. Based on \cite{HWZ}, one can define the topological pressure of an NDS $(X, f_{1,\infty})$ analogously to the topological pressure of an autonomous dynamical system
$(X,f)$, for example, see \cite{PW}. Indeed, for $f_{1}=f_{2}=\cdots=f$ we get the classical definition.

Fix an NDS $(X, f_{1,\infty})$ and let $\mathcal{C}(X,\mathbb{R})$ be the space of real-valued continuous functions of $X$. For $\psi\in\mathcal{C}(X,\mathbb{R})$ and $i,n\in\mathbb{N}$, denote
$\Sigma_{j=0}^{n-1}\psi(f_{i}^{j}(x))$ by $S_{i,n}\psi(x)$. Also, for subset $U$ of $X$,
put $S_{i,n}\psi(U):=\sup_{x\in U}S_{i,n}\psi(x)$.
\begin{definition}[Topological pressure]
Following \cite{HWZ}, the \emph{topological pressure} of an NDS $(X, f_{1,\infty})$ with respect to any continuous potential $\psi\in\mathcal{C}(X,\mathbb{R})$ defined by
\begin{equation*}
P_{\text{top}}(f_{1,\infty},\psi):=\lim_{\epsilon\to 0}\limsup_{n\to\infty}\frac{1}{n}\log P_{n}(f_{1,\infty},\psi,\epsilon),
\end{equation*}
where
\begin{equation*}
P_{n}(f_{1,\infty},\psi,\epsilon):=\sup\Big\{\sum_{x\in E}e^{S_{1,n}\psi(x)}: E\ \text{is an}\
(n,\epsilon;f_{1,\infty})\text{-separated set for}\ X\Big\}.
\end{equation*}
\end{definition}
Note that, $P_{\text{top}}(f_{1,\infty},\psi)$ exists but could be $\infty$. Moreover, the topological pressure can also be defined in terms of spanning sets and sequences of open covers. For the sake of completeness, given $\epsilon>0$, take
\begin{equation*}
Q_{n}(f_{1,\infty},\psi,\epsilon):=\inf\Big\{\sum_{x\in F}e^{S_{1,n}\psi(x)}: F\ \text{is an}\
(n,\epsilon;f_{1,\infty})\text{-spanning set for}\ X\Big\}.
\end{equation*}
Also, let $\mathcal{A}$ be an open cover of $X$. Put
\begin{equation*}
q_{n}(f_{1,\infty},\psi,\mathcal{A}):=\inf\Big\{\sum_{B\in\mathcal{B}}\inf_{x\in B} e^{S_{1,n}\psi(x)}:\mathcal{B}\ \text{is a finite subcover of}\ \bigvee_{j=0}^{n-1}f_{1}^{-j}(\mathcal{A})\Big\}
\end{equation*}
and
\begin{equation*}
p_{n}(f_{1,\infty},\psi,\mathcal{A}):=\inf\Big\{\sum_{B\in\mathcal{B}}\sup_{x\in B} e^{S_{1,n}\psi(x)}:\mathcal{B}\ \text{is a finite subcover of}\ \bigvee_{j=0}^{n-1}f_{1}^{-j}(\mathcal{A})\Big\}.
\end{equation*}
Then, by \cite[Proposition 2.1 and Remark 2.3 ]{HWZ}, we have the following proposition.
\begin{proposition}\label{proposition51000}
For NDS $(X, f_{1,\infty})$ and continuous potential $\psi\in\mathcal{C}(X,\mathbb{R})$:
\begin{itemize}
\item[(1)] $P_{\text{top}}(f_{1,\infty},\psi)
=\lim_{\epsilon\to 0}\limsup_{n\to\infty}\frac{1}{n}\log Q_{n}(f_{1,\infty},\psi,\epsilon)$.
\item[(2)] $P_{\text{top}}(f_{1,\infty},\psi)
=\lim_{k\to\infty}\limsup_{n\to\infty}\frac{1}{n}\log q_{n}(f_{1,\infty},\psi,\mathcal{A}_{k})$,
where $\mathcal{A}_{k}$ is a sequence of open covers with $\text{diam}(\mathcal{A}_{k})\to 0$. 
\item[(3)] $P_{\text{top}}(f_{1,\infty},\psi)
=\lim_{k\to\infty}\lim_{n\to\infty}\frac{1}{n}\log p_{n}(f_{1,\infty},\psi,\mathcal{A}_{k})$,
where $\mathcal{A}_{k}$ is a sequence of open covers with $\text{diam}(\mathcal{A}_{k})\to 0$. 
\item[(4)] If $\mathcal{A}$ is an open cover of $X$, then $\lim_{n\to\infty}\frac{1}{n}\log p_{n}(f_{1,\infty},\psi,\mathcal{A})$ exists and equals to $\inf_{n}\dfrac{1}{n}\log p_{n}(f_{1,\infty},\psi,\mathcal{A})$.
\end{itemize}
\end{proposition}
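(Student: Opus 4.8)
The plan is to derive all four identities from a single comparison principle relating separated sets, spanning sets and subcovers of the joined cover at comparable scales, together with the uniform continuity of $\psi$. Since $X$ is compact, $\psi$ is uniformly continuous, so its variation $\mathrm{var}(\psi,\delta):=\sup\{|\psi(u)-\psi(v)|:d(u,v)\le\delta\}$ tends to $0$ as $\delta\to 0$. The estimate I would record first is that whenever $d_{1,n}(x,y)\le\epsilon$ one has
\begin{equation*}
|S_{1,n}\psi(x)-S_{1,n}\psi(y)|\le\sum_{j=0}^{n-1}|\psi(f_{1}^{j}(x))-\psi(f_{1}^{j}(y))|\le n\,\mathrm{var}(\psi,\epsilon),
\end{equation*}
so that Birkhoff weights over $\epsilon$-close orbit segments differ only by the factor $e^{n\,\mathrm{var}(\psi,\epsilon)}$. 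This factor contributes nothing after dividing by $n$ and letting $\epsilon\to 0$, and it is the uniform device that converts every comparison below into an equality in the limit.

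For (1), I would use that a maximal $(n,\epsilon;f_{1,\infty})$-separated set is simultaneously $(n,\epsilon;f_{1,\infty})$-spanning; viewing it as a spanning set gives $Q_{n}(f_{1,\infty},\psi,\epsilon)\le P_{n}(f_{1,\infty},\psi,\epsilon)$. For the reverse direction, given any $(n,2\epsilon)$-separated set $E$ and any $(n,\epsilon)$-spanning set $F$, I would assign to each point of $E$ a shadowing point of $F$; the assignment is injective because $E$ is $2\epsilon$-separated, and the displayed estimate yields $P_{n}(f_{1,\infty},\psi,2\epsilon)\le e^{n\,\mathrm{var}(\psi,\epsilon)}Q_{n}(f_{1,\infty},\psi,\epsilon)$. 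Applying $\tfrac1n\log$, then $\limsup_{n}$, then $\epsilon\to 0$ squeezes the spanning quantity between two copies of the separated quantity, proving (1). For (2), I would pass from subcovers of $\bigvee_{j=0}^{n-1}f_{1}^{-j}(\mathcal{A}_{k})$ to spanning and separated sets: every element of this cover has $d_{1,n-1}$-diameter at most $\diam(\mathcal{A}_{k})$, so selecting one point per element of a subcover produces a spanning set, while a Lebesgue number of $\mathcal{A}_{k}$ lets a separated set be refined by the cover; together with the variation estimate (now controlling the gap between $\inf_{x\in B}$ and the value at the chosen point) this gives two-sided bounds between $q_{n}$ and $Q_{n}$ at scales comparable to $\diam(\mathcal{A}_{k})$, and letting $\diam(\mathcal{A}_{k})\to 0$ yields (2).

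Part (4) is the engine for (3). The splitting $S_{1,n+m}\psi(x)=S_{1,n}\psi(x)+S_{n+1,m}\psi(f_{1}^{n}(x))$ together with the identity $\bigvee_{j=0}^{n+m-1}f_{1}^{-j}(\mathcal{A})=\big(\bigvee_{j=0}^{n-1}f_{1}^{-j}(\mathcal{A})\big)\vee f_{1}^{-n}\big(\bigvee_{i=0}^{m-1}f_{n+1}^{-i}(\mathcal{A})\big)$ lets me combine an optimal subcover for the first block with one for the shifted second block; on each intersection the supremum of $e^{S_{1,n+m}\psi}$ factorizes, giving the submultiplicative estimate $p_{n+m}(f_{1,\infty},\psi,\mathcal{A})\le p_{n}(f_{1,\infty},\psi,\mathcal{A})\,p_{m}(f_{n+1,\infty},\psi,\mathcal{A})$. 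Once the $n$-limit is known to exist and equal the infimum, the elementary sandwich $q_{n}\le p_{n}\le e^{n\,\mathrm{var}(\psi,\diam\mathcal{A})}q_{n}$ upgrades the $\limsup$ in (2) to the $\lim$ in (3), and $\diam(\mathcal{A}_{k})\to 0$ then finishes (3).

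The main obstacle is precisely the time-shift in the second factor: the estimate produces $p_{m}(f_{n+1,\infty},\psi,\mathcal{A})$ rather than $p_{m}(f_{1,\infty},\psi,\mathcal{A})$, so with $a_{n}^{(k)}:=\log p_{n}(f_{k,\infty},\psi,\mathcal{A})$ the sequence is only subadditive in the cocycle sense $a_{n+m}^{(1)}\le a_{n}^{(1)}+a_{m}^{(n+1)}$, and Fekete's lemma does not apply verbatim. This is the step I expect to be delicate. The route I would pursue is to iterate the estimate blockwise, writing $a_{qm+r}^{(1)}\le\sum_{i=0}^{q-1}a_{m}^{(im+1)}+a_{r}^{(qm+1)}$, and to control the resulting Cesàro averages of $i\mapsto\tfrac1m a_{m}^{(im+1)}$; showing that these converge, as first $q\to\infty$ and then $m\to\infty$, to $\inf_{n}\tfrac1n a_{n}^{(1)}$ is the crux, and it is the pressure analogue of the limit lemma \cite[Lemma 4.5]{KS} that Kolyada and Snoha established for the entropy of a single cover.
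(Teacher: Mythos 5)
Your parts (1) and (2) are essentially correct: the two-sided comparisons between separated sets, spanning sets, and subcovers of $\bigvee_{j=0}^{n-1}f_{1}^{-j}(\mathcal{A}_{k})$, with the error factor $e^{n\,\mathrm{var}(\psi,\epsilon)}$ killed after dividing by $n$ and letting the scale go to $0$, are the standard Walters argument and it does transfer to NDSs verbatim. Note, however, that the paper itself offers no proof of this proposition at all: it is quoted wholesale from \cite{HWZ} (Proposition 2.1 and Remark 2.3), so there is no argument in the paper to compare yours against.

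The gap is exactly where you flagged it, in part (4), and it cannot be closed: the step you call delicate is in fact impossible, because part (4) is false for general NDSs. Your submultiplicative estimate $p_{n+m}(f_{1,\infty},\psi,\mathcal{A})\le p_{n}(f_{1,\infty},\psi,\mathcal{A})\,p_{m}(f_{n+1,\infty},\psi,\mathcal{A})$ is correct, but only the cocycle subadditivity $a^{(1)}_{n+m}\le a^{(1)}_{n}+a^{(n+1)}_{m}$ survives, and no blockwise iteration can recover Fekete's conclusion. Concretely, take $X=\{0,1\}^{\mathbb{N}}$ with the shift $\sigma$, let $\mathcal{A}=\{[0],[1]\}$ be the (clopen) cover by cylinders of length one, take $\psi=0$, and define $f_{i}=\sigma$ for $i\in(4^{2k},4^{2k+1}]$, $k\ge 0$, and $f_{i}=\mathrm{id}$ otherwise; all maps are continuous and surjective. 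Writing $G(j)$ for the number of indices $i\le j$ with $f_{i}=\sigma$, one has $f_{1}^{j}=\sigma^{G(j)}$, so $\bigvee_{j=0}^{n-1}f_{1}^{-j}(\mathcal{A})$ is the partition into cylinders of length $G(n-1)+1$ and $p_{n}(f_{1,\infty},0,\mathcal{A})=\mathcal{N}(\mathcal{A}_{1}^{n})=2^{G(n-1)+1}$. Along $n=4^{2k+1}$ one gets $G(n-1)/n\to 4/5$, while along $n=4^{2k+2}$ one gets $G(n-1)/n\to 1/5$; hence $\frac{1}{n}\log p_{n}(f_{1,\infty},0,\mathcal{A})$ oscillates and has no limit. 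This is precisely the failure mode of your proposed repair: the shifted quantities $a^{(im+1)}_{m}$ remember which block the time window sits in, so their Ces\`{a}ro averages do not converge, let alone to $\inf_{n}\frac{1}{n}a^{(1)}_{n}$. The same example shows the inner limit in part (3) must be a $\limsup$. Finally, the analogy you invoke with \cite[Lemma 4.5]{KS} does not help: that lemma concerns monotonicity in the \emph{starting index} $k$ of the limsup-defined quantities $h(X,f_{k,\infty},\mathcal{A})$, not existence of the limit in $n$; indeed Kolyada and Snoha define entropy with a $\limsup$ precisely because that limit need not exist. So your parts (1)--(2) stand, but parts (3)--(4) are unprovable as stated (they hold in the autonomous case, where the shifted system coincides with the original and Fekete applies); your attempt has in effect uncovered a defect in the proposition itself and in the citation the paper relies on.
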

\begin{remark}
We recall that Kawan \cite{K2} has generalized the above notions of topological pressure to general
situation $(X_{1,\infty},f_{1,\infty})$ and defined its measure-theoretic counterpart. Also, he obtained a variational inequality for pressure.
\end{remark}
Now, we fix the NDS $(X,f_{1,\infty})$ and consider the \emph{topological pressure function} $P_{\text{top}}(f_{1,\infty},.)$ as a function in the space $\mathcal{C}(X,\mathbb{R})$, with the norm defined
by $\|\psi\|=\sup\{|\psi(x)|: x\in X\}$. Then, similar to the case of autonomous dynamical systems we have the following theorem about the properties of the topological pressure function. For simplicity, we denote $\limsup_{n\to\infty}\frac{1}{n}\log P_{n}(f_{1,\infty},\psi,\epsilon)$ by $P(f_{1,\infty},\psi,\epsilon)$.
\begin{theorem}\label{proposition4}
Let $(X, f_{1,\infty})$ be an NDS of continuous maps on a compact metric space $X$. If $\psi,\varphi\in\mathcal{C}(X,\mathbb{R})$, $\epsilon>0$ and $c\in\mathbb{R}$ then the following are true.
\begin{itemize}
\item[(1)]
$P_{\text{top}}(f_{1,\infty},0)=h_{\text{top}}(X, f_{1,\infty})$.
\item[(2)]
$\varphi\leq\psi$ implies $P_{\text{top}}(f_{1,\infty},\phi)\leq P_{\text{top}}(f_{1,\infty},\psi)$. In particular,
$h_{\text{top}}(X, f_{1,\infty})+\inf\psi\leq P_{\text{top}}(f_{1,\infty},\psi)\leq h_{\text{top}}(X, f_{1,\infty})+\sup\psi$.
\item[(3)]
$P_{\text{top}}(f_{1,\infty},.)$ is either finite valued or constantly $\infty$.
\item[(4)]
$|P(f_{1,\infty},\psi,\epsilon)-P(f_{1,\infty},\varphi,\epsilon)|\leq\|\psi-\varphi\|$. Therefore, if $P_{\text{top}}(f_{1,\infty},.)<\infty$, then $|P_{\text{top}}(f_{1,\infty},\psi)-P_{\text{top}}(f_{1,\infty},\varphi)|\leq\|\psi-\varphi\|$.
\item[(5)]
$P(f_{1,\infty},.,\epsilon)$ is convex, and so if $P_{\text{top}}(f_{1,\infty},.)<\infty$, then $P_{\text{top}}(f_{1,\infty},.)$ is convex.
\item[(6)]
$P_{\text{top}}(f_{1,\infty},\psi+c)=P_{\text{top}}(f_{1,\infty},\psi)+c$.
\item[(7)]
$P_{\text{top}}(f_{1,\infty},\psi+\varphi)\leq P_{\text{top}}(f_{1,\infty},\psi)+P_{\text{top}}(f_{1,\infty},\varphi)$.
\item[(8)]
$P_{\text{top}}(f_{1,\infty},c\psi)\leq cP_{\text{top}}(f_{1,\infty},\psi)$ if $c\geq 1$, $P_{\text{top}}(f_{1,\infty},c\psi)\geq cP_{\text{top}}(f_{1,\infty},\psi)$ if $c\leq 1$.
\item[(9)]
$|P_{\text{top}}(f_{1,\infty},\psi)|\leq P_{\text{top}}(f_{1,\infty},|\psi|)$.
\end{itemize}
\end{theorem}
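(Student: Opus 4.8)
The plan is to reduce every item to the behaviour of the ``partition sums'' $P_n(f_{1,\infty},\psi,\epsilon)=\sup_E\sum_{x\in E}e^{S_{1,n}\psi(x)}$ at a \emph{fixed} scale $\epsilon$, and only at the very end to pass to the limits $\limsup_{n\to\infty}$ and $\epsilon\to 0$. The whole argument rests on two elementary facts about the sums $S_{1,n}\psi(x)=\sum_{j=0}^{n-1}\psi(f_1^j(x))$: they are \emph{linear} in $\psi$, so $S_{1,n}(\alpha\psi+\beta\varphi)=\alpha S_{1,n}\psi+\beta S_{1,n}\varphi$, and they are \emph{uniformly close} for close potentials, $\big|S_{1,n}\psi(x)-S_{1,n}\varphi(x)\big|\le n\|\psi-\varphi\|$ for every $x$ and $n$.

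First, for (1) I would observe that $\psi\equiv 0$ makes each summand equal to $1$, so $P_n(f_{1,\infty},0,\epsilon)=s_n(f_{1,\infty};\epsilon)$, and relation (\ref{eq4}) immediately gives $P_{\text{top}}(f_{1,\infty},0)=h_{\text{top}}(X,f_{1,\infty})$. Items (2) and (6) are then direct: if $\varphi\le\psi$ pointwise then $S_{1,n}\varphi(x)\le S_{1,n}\psi(x)$, so summand-by-summand $P_n(\varphi,\epsilon)\le P_n(\psi,\epsilon)$ and the inequality survives both limits; while $S_{1,n}(\psi+c)=S_{1,n}\psi+nc$ gives $P_n(\psi+c,\epsilon)=e^{nc}P_n(\psi,\epsilon)$, hence $P_{\text{top}}(f_{1,\infty},\psi+c)=P_{\text{top}}(f_{1,\infty},\psi)+c$. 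The ``in particular'' bounds of (2) follow by applying monotonicity to the constant potentials $\inf\psi\le\psi\le\sup\psi$ and then invoking (1) and (6); these same bounds give (3), since $h_{\text{top}}(X,f_{1,\infty})+\inf\psi\le P_{\text{top}}(f_{1,\infty},\psi)\le h_{\text{top}}(X,f_{1,\infty})+\sup\psi$ forces the pressure to be everywhere finite when $h_{\text{top}}$ is finite and everywhere $\infty$ otherwise.

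The analytic core is (4), (5) and (8). For (4) I would use the closeness estimate to get $e^{S_{1,n}\psi(x)}\le e^{n\|\psi-\varphi\|}e^{S_{1,n}\varphi(x)}$, hence $P_n(\psi,\epsilon)\le e^{n\|\psi-\varphi\|}P_n(\varphi,\epsilon)$; taking $\tfrac1n\log$ and $\limsup_n$ yields $|P(f_{1,\infty},\psi,\epsilon)-P(f_{1,\infty},\varphi,\epsilon)|\le\|\psi-\varphi\|$, and letting $\epsilon\to 0$ (under the finiteness hypothesis) gives the Lipschitz bound for $P_{\text{top}}$. For (5), writing $p+q=1$ with $p,q\ge0$, I would apply H\"older's inequality with exponents $1/p,1/q$ to $\sum_{x\in E}(e^{S_{1,n}\psi(x)})^p(e^{S_{1,n}\varphi(x)})^q$, obtaining $P_n(p\psi+q\varphi,\epsilon)\le P_n(\psi,\epsilon)^pP_n(\varphi,\epsilon)^q$; taking logarithms and $\limsup_n$ gives convexity of $P(f_{1,\infty},\cdot,\epsilon)$, inherited by $P_{\text{top}}$ as a pointwise limit. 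For (8) I would use the elementary inequality for finite sequences of positive reals, $\sum_i a_i^c\le(\sum_i a_i)^c$ when $c\ge1$ and $\sum_i a_i^c\ge(\sum_i a_i)^c$ when $c\le1$, applied to $a_x=e^{S_{1,n}\psi(x)}$ together with $S_{1,n}(c\psi)=cS_{1,n}\psi$.

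Finally, (7) and (9) I would obtain by chaining the previous items rather than by a fresh estimate: writing $\psi+\varphi=\tfrac12(2\psi)+\tfrac12(2\varphi)$ and combining convexity (5) with the scaling bound (8) for $c=2$ gives $P_{\text{top}}(f_{1,\infty},\psi+\varphi)\le P_{\text{top}}(f_{1,\infty},\psi)+P_{\text{top}}(f_{1,\infty},\varphi)$; and for (9), since $-|\psi|\le\psi\le|\psi|$, monotonicity (2) gives $P_{\text{top}}(f_{1,\infty},\psi)\le P_{\text{top}}(f_{1,\infty},|\psi|)$, while subadditivity (7) applied to $|\psi|+(-|\psi|)$ together with (1) forces $P_{\text{top}}(f_{1,\infty},-|\psi|)\ge-P_{\text{top}}(f_{1,\infty},|\psi|)$, yielding the two-sided bound $|P_{\text{top}}(f_{1,\infty},\psi)|\le P_{\text{top}}(f_{1,\infty},|\psi|)$. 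The main obstacle I anticipate is purely bookkeeping: all the substantive inequalities are first proved for the fixed-scale quantity $P(f_{1,\infty},\cdot,\epsilon)$, and one must check that each property is stable under the passage $\epsilon\to0$ and under the standing hypothesis $P_{\text{top}}(f_{1,\infty},\cdot)<\infty$. Convexity and the Lipschitz bound survive because they are preserved by pointwise limits, so no genuinely new difficulty arises beyond keeping the $\limsup_n$ and $\epsilon\to0$ operations in the correct order.
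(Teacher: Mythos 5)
Your proposal is correct and takes essentially the same approach as the paper, which simply defers to the autonomous-case arguments of Walters (Theorem 9.7 in \cite{PW}) and writes out only part (4); there the paper's estimate is exactly yours, namely $P_{n}(f_{1,\infty},\psi,\epsilon)\le e^{n\|\psi-\varphi\|}\,P_{n}(f_{1,\infty},\varphi,\epsilon)$, obtained via the ratio inequality $\sup a_{i}/\sup b_{i}\le\sup(a_{i}/b_{i})$ rather than your termwise multiplication. Your only departures are cosmetic: deriving (7) from (5) and (8), and (9) from (7), (1) and (2), instead of using the direct estimates; both chains are valid provided you invoke (3) to dispose of the identically-infinite case, a bookkeeping point you already flag.
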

\begin{proof}
The proof is similar to the case of autonomous systems (for detailed proof see \cite[Theorem 9.7]{PW} and \cite{VO}). But, for the sake of completeness, we sketch a quick proof of part $(4)$. Indeed, let $(a_{i})$ and $(b_{i})$ be collections of positive real numbers, then it is folklore that we have the following simple inequality
\begin{equation}\label{3333}
\dfrac{\sup a_{i}}{\sup b_{i}}\leq\sup\Big(\dfrac{a_{i}}{b_{i}}\Big).
\end{equation}
Now, by the definition of topological pressure and inequality (\ref{3333}), we deduce that
\begin{eqnarray*}
\dfrac{P_{n}(f_{1,\infty},\psi,\epsilon)}{P_{n}(f_{1,\infty},\varphi,\epsilon)} &=&
\dfrac{\sup\Big\{\sum_{x\in E}e^{S_{1,n}\psi(x)}: E\ \text{is an}\
(n,\epsilon;f_{1,\infty})\text{-separated set for}\ X\Big\}}{\sup\Big\{\sum_{x\in E}e^{S_{1,n}\varphi(x)}: E\ \text{is an}\ (n,\epsilon;f_{1,\infty})\text{-separated set for}\ X\Big\}} \\
&\leq & \sup \Bigg\{\dfrac{\sum_{x\in E}e^{S_{1,n}\psi(x)}}{\sum_{x\in E}e^{S_{1,n}\varphi(x)}}: E\ \text{is an}\ (n,\epsilon;f_{1,\infty})\text{-separated set for}\ X\Bigg\} \\
&\leq & \sup \Bigg\{\max_{x\in E}\dfrac{e^{S_{1,n}\psi(x)}}{e^{S_{1,n}\varphi(x)}}: E\ \text{is an}\ (n,\epsilon;f_{1,\infty})\text{-separated set for}\ X\Bigg\} \\
&\leq & e^{n\|\psi-\varphi\|},
\end{eqnarray*}
which proves the part $(4)$.
\end{proof}
\subsection{Thermodynamics of expansive NDSs}
In this section, we study the thermodynamic properties of expansive NDSs and uniformly Ruelle-expanding NDSs. First, we introduce a special class of continuous potentials and provide another formula to compute the topological pressure of this class of continuous potentials (Proposition \ref{proposition3}). Then we obtain conditions under which the topological entropy and topological pressure of any continuous potential can be computed as a limit at a definite size scale (Theorem \ref{theorem6}). Finally, we prove a strong regularity of the topological pressure function (Theorem \ref{theorem111}).

Given $\epsilon>0$, $i,n\in\mathbb{N}$, we say that an
open cover $\mathcal{U}$ of $X$ is a $(i,n,\epsilon)$-\emph{cover} if any open set $U\in\mathcal{U}$
has $d_{i,n}$-diameter smaller than $\epsilon$, where $d_{i,n}$ is the Bowen-metric introduced in (\ref{eq8}).
To obtain another characterization of the topological pressure using these covers, we need
continuous potentials satisfying a regularity condition. Given $\epsilon>0$, $i,n\in\mathbb{N}$ and
$\psi\in\mathcal{C}(X,\mathbb{R})$ we define
the \emph{variation} of $S_{i,n}\psi$ on dynamical balls of radius $\epsilon$ by
$$\text{Var}_{i,n}(\psi,\epsilon):=\sup_{d_{i,n}(x,y)<\epsilon}|S_{i,n}\psi(x)-S_{i,n}\psi(y)|.$$

We say that potential $\psi$ has \emph{uniform bounded variation on dynamical balls of radius} $\epsilon$ if there
exists $C>0$ so that
\begin{equation}\label{222}
\sup_{n}\text{Var}_{1,n}(\psi,\epsilon)\leq C.
\end{equation}
The potential $\psi$ has the \emph{uniformly bounded variation property} whenever there exists $\epsilon>0$
so that $\psi$ has uniform bounded variation on dynamical balls of radius $\epsilon$. Note that equation (\ref{222}) is the counterpart of Bowen's condition to NDSs, see \cite{W1,CL}.

In the next lemma we prove that H\"{o}lder continuous potentials have uniformly bounded variation property for uniformly Ruelle-expanding NDSs.
\begin{lemma}
Let $(X, f_{1,\infty})$ be a uniformly Ruelle-expanding NDS with uniform expansion factor $\sigma$ and injectivity constant $\rho$. Then any H\"{o}lder continuous potential $\psi$ satisfies the uniformly bounded variation property.
\end{lemma}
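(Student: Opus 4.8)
The plan is to combine the exponential contraction along inverse branches, recorded in relation (\ref{in}), with the Hölder bound on the potential. Write the Hölder condition as $|\psi(u)-\psi(v)|\le K\,d(u,v)^{\alpha}$ for all $u,v\in X$, with constants $K>0$ and $\alpha\in(0,1]$. I will verify the uniformly bounded variation property at the scale $\epsilon=\rho$; concretely, I aim to produce a constant $C>0$ depending only on $K,\alpha,\sigma,\rho$ (and in particular independent of $n$) with $\text{Var}_{1,n}(\psi,\rho)\le C$ for every $n$, which is exactly condition (\ref{222}).

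The key step is the contraction estimate: if $d_{1,n}(x,y)<\rho$, then
\[
d\big(f_{1}^{j}(x),f_{1}^{j}(y)\big)\le\sigma^{-(n-j)}\,d\big(f_{1}^{n}(x),f_{1}^{n}(y)\big)\le\sigma^{-(n-j)}\rho \quad\text{for all }0\le j\le n.
\]
To establish it, set $z=f_{1}^{n}(x)$ and $w=f_{1}^{n}(y)$; since $d(z,w)<\rho$ we have $w\in B(f_{1}^{n}(x),\rho)$, the domain of the inverse branch $h_{1,x}^{n}$. Because $d_{1,n}(x,y)<\rho$ forces $f_{1}^{i}(y)\in B(f_{1}^{i}(x),\rho)$ for every $0\le i\le n$, the point $y$ lies on the inverse branch of $f_{1}^{n}$ at $x$ selected by $w$; a downward induction on $j$ (applying the inverse branches one at a time, using at each stage that $h_{i,f_{1}^{i-1}(x)}$ is the genuine inverse of $f_{i}$ on $B(f_{1}^{i-1}(x),\rho)\cap f_{i}^{-1}(B(f_{1}^{i}(x),\rho))$) yields $h_{1,x}^{n}(w)=y$, while $h_{1,x}^{n}(z)=x$ holds by definition. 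Relation (\ref{in}) then gives $d(f_{1}^{j}(x),f_{1}^{j}(y))=d(f_{1}^{j}\circ h_{1,x}^{n}(z),f_{1}^{j}\circ h_{1,x}^{n}(w))\le\sigma^{j-n}d(z,w)$, which is the asserted bound. This is essentially the same computation already performed inside the proof of Lemma \ref{lem0}.

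With the contraction in hand the rest is routine. For $x,y$ with $d_{1,n}(x,y)<\rho$, applying the Hölder estimate termwise gives
\[
\big|S_{1,n}\psi(x)-S_{1,n}\psi(y)\big|\le\sum_{j=0}^{n-1}\big|\psi(f_{1}^{j}(x))-\psi(f_{1}^{j}(y))\big|\le K\rho^{\alpha}\sum_{j=0}^{n-1}\sigma^{-\alpha(n-j)}.
\]
Re-indexing by $m=n-j$ bounds the last sum by the convergent geometric series $\sum_{m\ge 1}\sigma^{-\alpha m}=(\sigma^{\alpha}-1)^{-1}$, which is finite since $\sigma>1$ and $\alpha>0$ force $\sigma^{\alpha}>1$. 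Taking the supremum over all admissible $x,y$ yields $\text{Var}_{1,n}(\psi,\rho)\le K\rho^{\alpha}/(\sigma^{\alpha}-1)=:C$, uniformly in $n$, which is precisely (\ref{222}) and hence the uniformly bounded variation property. The one delicate point is the identity $h_{1,x}^{n}(w)=y$ needed to invoke (\ref{in}): it rests on the orbits of $x$ and $y$ staying $\rho$-close, so that $y$ occupies the correct inverse branch, and this is exactly what $d_{1,n}(x,y)<\rho$ guarantees, being the dynamical shadow of expansivity (Lemma \ref{lemma2}).
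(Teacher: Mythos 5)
Your proof is correct and follows essentially the same route as the paper's: apply the contraction estimate (\ref{in}) along inverse branches to get $d(f_{1}^{j}(x),f_{1}^{j}(y))\leq\sigma^{j-n}d(f_{1}^{n}(x),f_{1}^{n}(y))$, feed this into the H\"{o}lder bound termwise, and sum the resulting geometric series to obtain a constant independent of $n$. The only difference is that you explicitly verify the inverse-branch identity $h_{1,x}^{n}(f_{1}^{n}(y))=y$ (a detail the paper leaves implicit when invoking (\ref{in})), and you work at scale $\epsilon=\rho$ rather than $0<\epsilon<\rho$, neither of which changes the argument.
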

\begin{proof}
Let $\psi$ be a $(K,\alpha)$-H\"{o}lder continuous potential, i.e. $|\psi(x)-\psi(y)|\leq K d(x,y)^{\alpha}$ for all $x,y\in X$. Then by relation (\ref{in}) for any $0<\epsilon<\rho$, $n\in\mathbb{N}$ and $x,y\in X$ with $d_{1,n}(x,y)<\epsilon$ we have
\begin{eqnarray*}
|S_{1,n}\psi(x)-S_{1,n}\psi(y)|
&=& \Bigg|\sum_{j=0}^{n-1}\psi(f_{1}^{j}(x))-\sum_{j=0}^{n-1}\psi(f_{1}^{j}(y))\Bigg|
\leq \sum_{j=0}^{n-1}|\psi(f_{1}^{j}(x))-\psi(f_{1}^{j}(y))|\\
&\leq & \sum_{j=0}^{n-1}Kd(f_{1}^{j}(x),f_{1}^{j}(y))^{\alpha}\leq\sum_{j=0}^{n-1}K
\sigma^{(j-n)\alpha} d(f_{1}^{n}(x),f_{1}^{n}(y))^{\alpha}\\
&=& \dfrac{K}{1-\sigma^{-\alpha}}\epsilon^{\alpha}.
\end{eqnarray*}
Hence, it is enough to take $C=\dfrac{K}{1-\sigma^{-\alpha}}\epsilon^{\alpha}$ in relation (\ref{222}), as we wanted to prove.
\end{proof}
In the following proposition, we provide another formula for the topological pressure of an NDS for the class of continuous potentials having the uniformly bounded variation property.
\begin{proposition}\label{proposition3}
Let $(X, f_{1,\infty})$ be an NDS and $\psi:X\to\mathbb{R}$ be a continuous potential with the uniformly bounded variation property. Then
\begin{equation*}
P_{\text{top}}(f_{1,\infty},\psi)=\lim_{\epsilon\to 0}\limsup_{n\to\infty}\frac{1}{n}\log \Big(\inf_{\mathcal{U}}\sum_{U\in\mathcal{U}}e^{S_{1,n}\psi(U)}\Big),
\end{equation*}
where the infimum is taken over the $(1,n,\epsilon)$-covers $\mathcal{U}$ of $X$.
\end{proposition}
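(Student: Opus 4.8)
The plan is to insert the cover quantity
\[
W_{n}(\epsilon):=\inf_{\mathcal{U}}\sum_{U\in\mathcal{U}}e^{S_{1,n}\psi(U)},
\]
where the infimum runs over all $(1,n,\epsilon)$-covers $\mathcal{U}$ of $X$, between the separated-set quantity $P_{n}(f_{1,\infty},\psi,\epsilon)$ and the spanning-set quantity $Q_{n}(f_{1,\infty},\psi,\epsilon)$, both of which already compute $P_{\text{top}}(f_{1,\infty},\psi)$ by the definition of topological pressure and by Proposition~\ref{proposition51000}(1). Only the second of the two comparisons below will use the uniformly bounded variation hypothesis; the first is purely combinatorial.

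First I would prove the estimate $P_{n}(f_{1,\infty},\psi,\epsilon)\le W_{n}(\epsilon)$. Fix an $(n,\epsilon;f_{1,\infty})$-separated set $E$ and an arbitrary $(1,n,\epsilon)$-cover $\mathcal{U}$. Since every $U\in\mathcal{U}$ has $d_{1,n}$-diameter strictly smaller than $\epsilon$ while distinct points of $E$ are at $d_{1,n}$-distance strictly larger than $\epsilon$, each $U$ meets $E$ in at most one point; choosing for every $x\in E$ a member $U_{x}\in\mathcal{U}$ with $x\in U_{x}$ therefore defines an injection $x\mapsto U_{x}$. Because $S_{1,n}\psi(U_{x})=\sup_{z\in U_{x}}S_{1,n}\psi(z)\ge S_{1,n}\psi(x)$, summing yields $\sum_{x\in E}e^{S_{1,n}\psi(x)}\le\sum_{U\in\mathcal{U}}e^{S_{1,n}\psi(U)}$. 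Taking the supremum over $E$ and the infimum over $\mathcal{U}$ gives $P_{n}(f_{1,\infty},\psi,\epsilon)\le W_{n}(\epsilon)$, and applying $\tfrac1n\log(\cdot)$, $\limsup_{n}$ and $\epsilon\to0$ produces $P_{\text{top}}(f_{1,\infty},\psi)\le\lim_{\epsilon\to0}\limsup_{n}\tfrac1n\log W_{n}(\epsilon)$.

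For the reverse inequality I would build an economical cover out of a spanning set and control the oscillation of $S_{1,n}\psi$ on its pieces. By the uniformly bounded variation property there are $\epsilon_{0}>0$ and $C>0$ with $\sup_{n}\text{Var}_{1,n}(\psi,\epsilon_{0})\le C$, and since $\text{Var}_{1,n}(\psi,\cdot)$ is nondecreasing in the radius the bound $C$ persists at every scale $\epsilon\le\epsilon_{0}$; as $\epsilon\to0$ we may assume $\epsilon\le\epsilon_{0}$. Given $n$, pick an $(n,\epsilon/4;f_{1,\infty})$-spanning set $F$ that nearly attains the infimum defining $Q_{n}(f_{1,\infty},\psi,\epsilon/4)$. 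Then the balls $B(y,1,n,\epsilon/3)$, $y\in F$, cover $X$ and each has $d_{1,n}$-diameter at most $2\epsilon/3<\epsilon$, so $\{B(y,1,n,\epsilon/3)\}_{y\in F}$ is an admissible $(1,n,\epsilon)$-cover. On each such ball every point $z$ satisfies $d_{1,n}(z,y)<\epsilon/3\le\epsilon_{0}$, whence $S_{1,n}\psi(B(y,1,n,\epsilon/3))\le S_{1,n}\psi(y)+C$ by the variation bound. Consequently $W_{n}(\epsilon)\le e^{C}\sum_{y\in F}e^{S_{1,n}\psi(y)}$, which by the choice of $F$ is at most $e^{C}Q_{n}(f_{1,\infty},\psi,\epsilon/4)$ up to an arbitrarily small error. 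Applying $\tfrac1n\log(\cdot)$, $\limsup_{n}$ and $\epsilon\to0$, the factor $e^{C}$ disappears and Proposition~\ref{proposition51000}(1) turns the right-hand side into $P_{\text{top}}(f_{1,\infty},\psi)$, giving $\lim_{\epsilon\to0}\limsup_{n}\tfrac1n\log W_{n}(\epsilon)\le P_{\text{top}}(f_{1,\infty},\psi)$. Combining the two estimates proves the proposition.

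I anticipate that the main obstacle is this second comparison: one must calibrate the radii ($\epsilon/3$ for the covering balls, $\epsilon/4$ for the spanning set) so that the balls genuinely cover $X$ yet have $d_{1,n}$-diameter strictly below $\epsilon$, and one must invoke the monotonicity of $\text{Var}_{1,n}(\psi,\cdot)$ so that a single constant $C$ controls the oscillation of the sums $S_{1,n}\psi$ simultaneously over all $n$ and all sufficiently small scales. This uniformity in $n$ is exactly what the uniformly bounded variation property supplies, and it is the only place where the hypothesis on $\psi$ enters.
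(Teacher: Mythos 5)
Your proof is correct and is essentially the paper's own argument: one inequality comes from the injection of an $(n,\epsilon;f_{1,\infty})$-separated set into any $(1,n,\epsilon)$-cover (no variation hypothesis needed), the other from covering $X$ by dynamical balls centered at a spanning family and absorbing the oscillation constant $C$ supplied by the uniformly bounded variation property, followed by a squeeze as $\epsilon\to 0$. The only cosmetic difference is that the paper takes the spanning family to be a \emph{maximal} $(n,\epsilon;f_{1,\infty})$-separated set, whose weighted sum is bounded by $P_{n}(f_{1,\infty},\psi,\epsilon)$ by definition, so it stays entirely within the separated-set quantity and never needs $Q_{n}$, Proposition \ref{proposition51000}(1), or your near-optimal choice of $F$ (where, to be scrupulous, one should approximate the infimum multiplicatively, e.g.\ by a factor $2$, rather than additively, since $Q_{n}$ may decay exponentially when $\inf\psi<0$).
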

\begin{proof}
By hypothesis let $\epsilon_{0}>0$ be so that $\psi$ has the uniform bounded variation on dynamical balls
of radius $\epsilon_0$. Take $\epsilon>0$ and $n\in\mathbb{N}$, without loss of generality we assume that $2\epsilon<\epsilon_{0}$.
For simplicity, we denote $\inf_{\mathcal{U}}\sum_{U\in\mathcal{U}}e^{S_{1,n}\psi(U)}$ by
$C_{n}(f_{1,\infty},\psi,\epsilon)$, where the infimum is taken over the $(1,n,\epsilon)$-covers $\mathcal{U}$ 
of $X$. Given an $(n,\epsilon;f_{1,\infty})$-maximal separated set $E$, it follows 
that $\mathcal{U}:=\{B(x,1,n,\epsilon)\}_{x\in E}$ is a
$(1,n,2\epsilon)$-cover. By the uniformly bounded variation property for some constant $C>0$, we deduce that
\begin{equation*}
S_{1,n}\psi(B(x,1,n,\epsilon))=\sup_{z\in B(x,1,n,\epsilon)}S_{1,n}\psi(z)\leq S_{1,n}\psi(x)+C.
\end{equation*}
Consequently,
\begin{equation}\label{eq9}
\limsup_{n\to\infty}\frac{1}{n}\log C_{n}(f_{1,\infty},\psi,2\epsilon)\leq\limsup_{n\to\infty}\frac{1}{n}\log P_{n}(f_{1,\infty},\psi,\epsilon).
\end{equation}
On the other hand, if $\mathcal{U}$ is a $(1,n,\epsilon)$-cover, then for
any $(n,\epsilon;f_{1,\infty})$-separated set $E$ we have
$\mathcal{N}(E)\leq\mathcal{N}(\mathcal{U})$, since the diameter of any $U\in\mathcal{U}$ in
the metric $d_{1,n}$ is less than $\epsilon$. By the uniformly bounded variation property, we get that
\begin{equation}\label{eq10}
\limsup_{n\to\infty}\frac{1}{n}\log P_{n}(f_{1,\infty},\psi,\epsilon)\leq\limsup_{n\to\infty}\frac{1}{n}\log C_{n}(f_{1,\infty},\psi,\epsilon).
\end{equation}
Now, combining equations (\ref{eq9}) and (\ref{eq10}), we have
\begin{eqnarray*}
\limsup_{n\to\infty}\frac{1}{n}\log P_{n}(f_{1,\infty},\psi,\epsilon)
&\leq & \limsup_{n\to\infty}\frac{1}{n}\log C_{n}(f_{1,\infty},\psi,\epsilon)\\
&\leq & \limsup_{n\to\infty}\frac{1}{n}\log P_{n}(f_{1,\infty},\psi,\frac{\epsilon}{2}).
\end{eqnarray*}
This finishes the proof of the proposition.
\end{proof}
By the previous results, the topological pressure of an NDS can be computed as the limiting complexity of the NDS as the size scale $\epsilon$ approaches zero. In what follows, we will be mostly interested in providing conditions for the topological pressure of an NDS to be computed as a limit at a definite size scale. Hence, we begin with the following definition.
\begin{definition}[$\ast$-expansivity]
An NDS $(X, f_{1,\infty})$ is called \emph{strongly} $\delta$-\emph{expansive} for some $\delta>0$ if for any $\gamma>0$ and any $x,y\in X$ with $d(x,y)\geq\gamma$, there exists $k_{0}\geq1$ (depending on $\gamma$) such that $d_{i,n}(x,y)>\delta$ for each $i,n\in\mathbb{N}$ with $n\geq k_{0}$. Also, an NDS is said to be $\ast$-expansive if it is strongly $\delta$-expansive for some $\delta>0$.
\end{definition}
Now, we prove that the topological entropy and topological pressure of any continuous potential of an $\ast$-expansive NDS can be computed as the topological complexity that is observable at a definite size scale. More precisely, we
have the following theorem.
\begin{theorem}\label{theorem6}
Let $(X, f_{1,\infty})$ be a strongly $\delta$-expansive NDS for some $\delta>0$. Then, for every continuous potential $\psi\in\mathcal{C}(X,\mathbb{R})$ and every $0<\epsilon<\delta$,
\begin{equation*}
P_{\text{top}}(f_{1,\infty},\psi)=\limsup_{n\to\infty}\frac{1}{n}\log P_{n}(f_{1,\infty},\psi,\epsilon).
\end{equation*}
In particular,
\begin{equation*}
h_{\text{top}}(X,f_{1,\infty})=\limsup_{n\to\infty}\frac{1}{n}\log s_{n}(f_{1,\infty};\epsilon).
\end{equation*}
\end{theorem}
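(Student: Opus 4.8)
The plan is to remove the limit $\epsilon\to 0$ from the definition of $P_{\text{top}}$ by combining the monotonicity of the scale-$\epsilon$ pressure with the uniformity built into $\ast$-expansivity. Write $P(f_{1,\infty},\psi,\epsilon)=\limsup_{n\to\infty}\frac1n\log P_n(f_{1,\infty},\psi,\epsilon)$. Since every $(n,\epsilon)$-separated set is $(n,\epsilon')$-separated whenever $\epsilon'<\epsilon$, the family of admissible separated sets only grows as the scale shrinks, so $P(f_{1,\infty},\psi,\cdot)$ is non-increasing in the scale and $P_{\text{top}}(f_{1,\infty},\psi)=\lim_{\epsilon\to 0}P(f_{1,\infty},\psi,\epsilon)=\sup_{\epsilon>0}P(f_{1,\infty},\psi,\epsilon)$; in particular $P(f_{1,\infty},\psi,\epsilon)\le P_{\text{top}}(f_{1,\infty},\psi)$ for every $\epsilon>0$. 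It therefore suffices to prove the reverse inequality for the fixed $\epsilon\in(0,\delta)$, and by the displayed supremum it is enough to show $P(f_{1,\infty},\psi,\epsilon')\le P(f_{1,\infty},\psi,\epsilon)$ for every sufficiently small $\epsilon'>0$.

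The heart of the argument is a separation-propagation step. Fix $\epsilon'>0$ and apply strong $\delta$-expansivity with $\gamma=\epsilon'$ to obtain an integer $k_0=k_0(\epsilon')$, \emph{uniform over all pairs at distance $\ge\epsilon'$}, such that $d_{i,m}(u,v)>\delta$ whenever $d(u,v)\ge\epsilon'$, $m\ge k_0$ and $i\ge 1$. Let $E$ be any $(n,\epsilon';f_{1,\infty})$-separated set and take distinct $x,y\in E$; then $d(f_1^{j}(x),f_1^{j}(y))>\epsilon'$ for some $0\le j\le n$. Applying the above with $u=f_1^{j}(x)$, $v=f_1^{j}(y)$, $i=j+1$, $m=k_0$, and using $f_{j+1}^{\ell}\circ f_1^{j}=f_1^{j+\ell}$, I obtain an index $0\le\ell\le k_0$ with $d(f_1^{j+\ell}(x),f_1^{j+\ell}(y))>\delta$; since $j+\ell\le n+k_0$, this forces $d_{1,n+k_0}(x,y)>\delta>\epsilon$. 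Hence every $(n,\epsilon';f_{1,\infty})$-separated set is automatically $(n+k_0,\epsilon;f_{1,\infty})$-separated.

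With this in hand the weighted estimate is routine. Because $S_{1,n+k_0}\psi(x)=S_{1,n}\psi(x)+\sum_{\ell=n}^{n+k_0-1}\psi(f_1^{\ell}(x))$, we have $|S_{1,n}\psi(x)-S_{1,n+k_0}\psi(x)|\le k_0\|\psi\|$ for every $x$, whence $e^{S_{1,n}\psi(x)}\le e^{k_0\|\psi\|}e^{S_{1,n+k_0}\psi(x)}$. Summing over an $(n,\epsilon')$-separated set $E$ and using that $E$ is $(n+k_0,\epsilon)$-separated gives $P_n(f_{1,\infty},\psi,\epsilon')\le e^{k_0\|\psi\|}\,P_{n+k_0}(f_{1,\infty},\psi,\epsilon)$. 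Taking $\frac1n\log(\cdot)$, letting $n\to\infty$, and absorbing the fixed shift $k_0$ into the index (so that $\frac1n\log P_{n+k_0}$ and $\frac1{n+k_0}\log P_{n+k_0}$ share the same $\limsup$) yields $P(f_{1,\infty},\psi,\epsilon')\le P(f_{1,\infty},\psi,\epsilon)$. Taking the supremum over small $\epsilon'$ gives $P_{\text{top}}(f_{1,\infty},\psi)\le P(f_{1,\infty},\psi,\epsilon)$, which with the easy inequality proves the first identity. The entropy statement follows by setting $\psi\equiv 0$, since then $P_n(f_{1,\infty},0,\epsilon)=s_n(f_{1,\infty};\epsilon)$ and $P_{\text{top}}(f_{1,\infty},0)=h_{\text{top}}(X,f_{1,\infty})$ by Theorem \ref{proposition4}(1).

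The step I expect to be the main obstacle, and the only place where $\ast$-expansivity is genuinely used, is the uniformity of $k_0$: it is essential that a single $k_0$ serves all pairs at distance $\ge\epsilon'$, for otherwise the passage from the single-time gap $d(f_1^{j}x,f_1^{j}y)>\epsilon'$ to uniform Bowen separation at scale $\delta$ inside a \emph{bounded} extra time window would fail, and the index shift in the final $\limsup$ would not be controllable; everything else is bookkeeping.
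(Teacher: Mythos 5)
Your proof is correct and follows essentially the same route as the paper's: the easy inequality by monotonicity in the scale, and the key step showing via the uniform $k_0$ of strong $\delta$-expansivity that any $(n,\epsilon')$-separated set is $(n+k_0,\epsilon)$-separated, followed by a Birkhoff-sum comparison. The only cosmetic difference is that the paper normalizes $\psi$ to be non-negative so that $e^{S_{1,n+k}\psi(x)}\geq e^{S_{1,n}\psi(x)}$ outright, whereas you absorb the discrepancy into the harmless factor $e^{k_0\|\psi\|}$; both devices are equivalent.
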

\begin{proof}
Since $X$ is compact and $\psi$ is continuous, without
loss of generality, we assume that $\psi$ is non-negative. Fix $\gamma$ and $\epsilon$
with $0<\gamma<\epsilon<\delta$. We use the definition of topological pressure in terms of separated sets to get the following inequality
\begin{equation*}
\limsup_{n\to\infty}\frac{1}{n}\log P_{n}(f_{1,\infty},\psi,\gamma)\geq\limsup_{n\to\infty}\frac{1}{n}\log P_{n}(f_{1,\infty},\psi,\epsilon).
\end{equation*}
Hence, it suffices to show that
$$\limsup_{n\to\infty}\frac{1}{n}\log P_{n}(f_{1,\infty},\psi,\gamma)\leq\limsup_{n\to\infty}\frac{1}{n}\log P_{n}(f_{1,\infty},\psi,\epsilon).$$

By the definition of strongly $\delta$-expansivity, for any two distinct
points $x,y\in X$ with $d(x,y)\geq\gamma$, there exists $k_{0}\geq1$ (depending on $\gamma$) such
that $d_{i,n}(x,y)>\delta$ for each $i,n\in\mathbb{N}$ with $n\geq k_{0}$. Take $n\geq 1$ and $k\geq k_{0}$.
Given any $(n,\gamma;f_{1,\infty})$-separated set $E$, we claim
that the set $E$ is $(n+k,\epsilon;f_{1,\infty})$-separated.
In fact, given $x,y\in E$ there exists a $0\leq j\leq n$ so that $d(f_{1}^{j}(x),f_{1}^{j}(y))>\gamma$.
Using that $n+k-j\geq k_{0}$ and definition of strongly $\delta$-expansivity, it follows
that $d_{j+1, n+k-j}(f_{1}^{j}(x),f_{1}^{j}(y))>\delta>\epsilon$.
This implies that $d_{1, n+k}(x,y)>\epsilon$. Hence, $E$ is $(n+k,\epsilon;f_{1,\infty})$-separated which completes the proof of the claim. Now, using that $\psi$ is non-negative, we can conclude that
\begin{center}
$e^{S_{1,n+k}\psi(x)}=e^{S_{1,n}\psi(x)}e^{S_{n+1,k}\psi(f_{1}^{n}(x))}\geq
e^{S_{1,n}\psi(x)}$.
\end{center}
Consequently,
\begin{center}
$P_{n}(f_{1,\infty},\psi,\gamma)\leq P_{n+k}(f_{1,\infty},\psi,\epsilon)$.
\end{center}
Hence
\begin{eqnarray*}
\limsup_{n\to\infty}\frac{1}{n}\log P_{n}(f_{1,\infty},\psi,\gamma)
&\leq &\limsup_{n\to\infty}\frac{1}{n+k}\log P_{n+k}(f_{1,\infty},\psi,\epsilon)\\
&\leq & \limsup_{n\to\infty}\frac{1}{n}\log P_{n}(f_{1,\infty},\psi,\epsilon),
\end{eqnarray*}
which implies that
\begin{equation*}
P_{\text{top}}(f_{1,\infty},\psi)=\limsup_{n\to\infty}\frac{1}{n}\log P_{n}(f_{1,\infty},\psi,\epsilon).
\end{equation*}
The second part is a direct consequence of part (1) of Theorem \ref{proposition4}. This finishes the proof of the theorem.
\end{proof}
\begin{remark}\label{remark512}
We observe that the conclusion of Theorem \ref{theorem6} also holds if we consider open covers instead of
separated sets. More precisely, assume that NDS $(X, f_{1,\infty})$ is strongly $\delta$-expansive for some $\delta>0$. Then, for every continuous potential $\psi:X\to\mathbb{R}$ with the uniform bounded variation
on dynamical balls of radius $\epsilon_0$ and every $0<\epsilon<\min\{\delta,\epsilon_{0}/2\}$,
\begin{equation*}
P_{\text{top}}(f_{1,\infty},\psi)=\limsup_{n\to\infty}\frac{1}{n}\log \Big(\inf_{\mathcal{U}}\sum_{U\in\mathcal{U}}e^{S_{1,n}\psi(U)}\Big),
\end{equation*}
where the infimum is taken over the $(1,n,\epsilon)$-covers $\mathcal{U}$ of $X$.
\end{remark}
In the following lemma we introduce a special class of NDSs with $\ast$-expansive property.
\begin{lemma}\label{lemma512}
Let $(X, f_{1,\infty})$ be a uniformly Ruelle-expanding NDS with uniform expansion factor $\sigma$ and injectivity constant $\rho$. Then the NDS $(X, f_{1,\infty})$ is $\ast$-expansive.
\end{lemma}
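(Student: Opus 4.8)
The plan is to exhibit an explicit strong expansivity constant, namely any $\delta$ with $0<\delta<\rho$ (for concreteness $\delta=\rho/2$), and to upgrade the qualitative expansivity of Lemma \ref{lemma2} to a quantitative estimate that is uniform in the initial time. The central estimate I would establish is the following: if $x,y\in X$, $i\in\mathbb{N}$, $n\geq 0$, and $d(f_i^j(x),f_i^j(y))\leq\delta$ for every $0\leq j\leq n$, then $d(x,y)\leq\sigma^{-n}\delta$.

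To prove this estimate I would first note that the hypothesis forces $f_i^j(y)\in B(f_i^j(x),\rho)$ for every $0\leq j\leq n$ (since $\delta<\rho$), and then run a downward induction on $j$ showing that the composition of inverse branches recovers the orbit of $y$. Concretely, $h_{i+j,f_i^j(x)}(f_i^{j+1}(y))=f_i^j(y)$, because $f_i^j(y)$ is the unique preimage of $f_i^{j+1}(y)$ under $f_{i+j}$ lying in $B(f_i^j(x),\rho)$ and the inverse branch is a homeomorphism there. Consequently $y=h_{i,x}^n(f_i^n(y))$, while $x=h_{i,x}^n(f_i^n(x))$ by construction. Applying inequality (\ref{in}) with $k=i$ and $j=0$ to the points $z=f_i^n(x)$ and $w=f_i^n(y)$ of $B(f_i^n(x),\rho)$ then yields
$$d(x,y)=d\big(h_{i,x}^n(f_i^n(x)),h_{i,x}^n(f_i^n(y))\big)\leq\sigma^{-n}\,d(f_i^n(x),f_i^n(y))\leq\sigma^{-n}\delta,$$
which is the desired estimate.

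With this in hand, the verification of $\ast$-expansivity is a short contrapositive argument. Given $\gamma>0$ and any $x,y\in X$ with $d(x,y)\geq\gamma$, suppose some $i,n$ satisfied $d_{i,n}(x,y)\leq\delta$, that is, $d(f_i^j(x),f_i^j(y))\leq\delta$ for all $0\leq j\leq n$. The estimate gives $\gamma\leq d(x,y)\leq\sigma^{-n}\delta$, hence $n\leq\log(\delta/\gamma)/\log\sigma$. Thus, choosing $k_0$ to be any integer exceeding $\log(\delta/\gamma)/\log\sigma$ (which depends only on $\gamma$, as $\delta$ and $\sigma$ are fixed), we obtain $d_{i,n}(x,y)>\delta$ for every $i\in\mathbb{N}$ and every $n\geq k_0$. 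This is exactly strong $\delta$-expansivity, so the NDS is $\ast$-expansive.

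The step I expect to be the main obstacle is the identity $y=h_{i,x}^n(f_i^n(y))$: one must justify that the orbit of $y$ never leaves the balls $B(f_i^j(x),\rho)$ along which the inverse branches of $f_i^n$ are defined, so that the composition $h_{i,x}^n$ genuinely inverts $f_i^n$ on $f_i^n(y)$ and not merely on $f_i^n(x)$. This is the same mechanism underlying Lemma \ref{lemma2}, but here it must be carried out with an arbitrary initial time $i$ and, crucially, with a threshold $k_0$ that is uniform in both $i$ and the pair $(x,y)$; that uniformity is what makes the $i$-independent contraction rate $\sigma$ do all the work.
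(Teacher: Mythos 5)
Your proposal is correct and takes essentially the same approach as the paper's proof: both use the $\sigma^{-1}$-contraction of inverse branches (relation (\ref{in})) to deduce that orbits remaining within the injectivity radius for $n$ steps force $d(x,y)\leq\sigma^{-n}\cdot\mathrm{const}$, and then choose $k_{0}$ logarithmically so that this bound falls below $\gamma$. The only difference is cosmetic but in your favor: you work with $\delta<\rho$ and justify the identity $y=h_{i,x}^{n}(f_{i}^{n}(y))$ by downward induction, whereas the paper (which proves strong $\rho$-expansivity directly) applies (\ref{in}) to the pair $(x,y)$ with this identification left implicit.
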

\begin{proof}
By assumption, all inverse branches of $f_{n}$, for each $n\geq 1$, are defined in balls of radius $\rho$ and
they are $\sigma^{-1}$ contraction. For given $\gamma>0$, take $k_{0}\geq 1$ (depending
on $\gamma$) so that $\sigma^{-k_{0}}\rho<\gamma$. We claim that  for any $x,y\in X$ with $d(x,y)>\gamma$
and $i,n\in\mathbb{N}$ with $n\geq k_{0}$ we have $d_{i,n}(x,y)>\rho$, i.e. NDS $(X, f_{1,\infty})$ is
strongly $\rho$-expansive. Assume, by contradiction, that there exist $i,n\in\mathbb{N}$ with $n\geq k_{0}$ such that $d_{i,n}(x,y)<\rho$. Then, by relation (\ref{in}) we have $d_{i,j}(x,y)\leq \sigma^{j-n}d_{i,n}(x,y)$ for every $0\leq j\leq n$. So $d(x,y)\leq \sigma^{-n}d_{i,n}(x,y)<\sigma^{-n}\rho\leq\sigma^{-k_{0}}\rho<\gamma$, which is a contradiction. This finishes the proof of the lemma.
\end{proof}
The next result is a consequence of the Theorem \ref{theorem6}, Remark \ref{remark512} and Lemma \ref{lemma512}.
\begin{corollary}
Let $(X, f_{1,\infty})$ be a uniformly Ruelle-expanding NDS with uniform expansion factor $\sigma$ and injectivity constant $\rho$. Then, for every continuous potential $\psi$ and every $0<\epsilon<\rho$,
\begin{equation*}
P_{\text{top}}(f_{1,\infty},\psi)=\limsup_{n\to\infty}\frac{1}{n}\log P_{n}(f_{1,\infty},\psi,\epsilon)\ \text{and}\ h_{\text{top}}(X,f_{1,\infty})=\limsup_{_{n\to\infty}}\dfrac{1}{n}\log s_{n}(f_{1,\infty};\epsilon).
\end{equation*}
Additionally, for every continuous potential $\psi:X\to\mathbb{R}$ with the uniform bounded variation
on dynamical balls of radius $\epsilon_0$ and every $0<\epsilon<\min\{\rho,\epsilon_{0}/2\}$,
\begin{equation*}
P_{\text{top}}(f_{1,\infty},\psi)=\limsup_{n\to\infty}\frac{1}{n}\log \Big(\inf_{\mathcal{U}}\sum_{U\in\mathcal{U}}e^{S_{1,n}\psi(U)}\Big),
\end{equation*}
where the infimum is taken over the $(1,n,\epsilon)$-covers $\mathcal{U}$ of $X$.
\end{corollary}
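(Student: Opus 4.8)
The plan is simply to combine the three immediately preceding results, since this corollary is the specialization of Theorem~\ref{theorem6} and Remark~\ref{remark512} to the uniformly Ruelle-expanding setting. The single conceptual step is to recognize that the $\ast$-expansivity hypothesis demanded by those two statements is supplied, with an explicit constant, by Lemma~\ref{lemma512}.

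First I would invoke Lemma~\ref{lemma512} to conclude that the uniformly Ruelle-expanding NDS $(X,f_{1,\infty})$ is strongly $\rho$-expansive; that is, it satisfies the $\ast$-expansivity property with expansivity constant $\delta=\rho$. This is the crucial identification, as it pins the admissible range of scales to $0<\epsilon<\rho$, exactly as stated in the corollary.

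With $\delta=\rho$ in hand, the first two equalities follow at once from Theorem~\ref{theorem6}. For any continuous $\psi\in\mathcal{C}(X,\mathbb{R})$ and any $0<\epsilon<\rho=\delta$, the theorem gives
\[
P_{\text{top}}(f_{1,\infty},\psi)=\limsup_{n\to\infty}\frac{1}{n}\log P_{n}(f_{1,\infty},\psi,\epsilon),
\]
and its ``in particular'' clause (equivalently, the case $\psi\equiv0$ together with part~(1) of Theorem~\ref{proposition4}) yields
\[
h_{\text{top}}(X,f_{1,\infty})=\limsup_{n\to\infty}\frac{1}{n}\log s_{n}(f_{1,\infty};\epsilon).
\]

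Finally, for the cover-based formula I would apply Remark~\ref{remark512}, again with $\delta=\rho$. For a continuous potential $\psi$ with the uniform bounded variation on dynamical balls of radius $\epsilon_{0}$ and any $0<\epsilon<\min\{\rho,\epsilon_{0}/2\}=\min\{\delta,\epsilon_{0}/2\}$, the remark delivers the stated identity with the infimum taken over the $(1,n,\epsilon)$-covers of $X$. There is no genuine obstacle here: all of the analytic content---the separated-set comparison underlying Theorem~\ref{theorem6} and the passage to covers via the uniformly bounded variation property in Remark~\ref{remark512}---has already been established, and what remains is only the bookkeeping check that Lemma~\ref{lemma512} produces precisely the constant $\delta=\rho$ needed to match the scale ranges appearing in the statement.
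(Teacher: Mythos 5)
Your proposal is correct and coincides with the paper's own justification: the paper states this corollary precisely as a consequence of Theorem~\ref{theorem6}, Remark~\ref{remark512} and Lemma~\ref{lemma512}, which is exactly the combination you use, with Lemma~\ref{lemma512} supplying strong $\rho$-expansivity so that $\delta=\rho$ matches the scale ranges $0<\epsilon<\rho$ and $0<\epsilon<\min\{\rho,\epsilon_{0}/2\}$. No gaps; nothing further is needed.
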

In the rest of this section, we study the Lipschitz regularity of the topological pressure function. Hence, we begin with the following proposition that provide a condition under which the topological pressure can be computed as a limit.
\begin{proposition}\label{pro999}
Let $(X, f_{1,\infty})$ be an NDS and $\psi:X\to\mathbb{R}$ be a continuous potential. Given $\epsilon>0$, the limit superior
\begin{equation*}
\limsup_{n\to\infty}\frac{1}{n}\log \Big(\inf_{\mathcal{U}}\sum_{U\in\mathcal{U}}e^{S_{1,n}\psi(U)}\Big)
\end{equation*}
is indeed a limit, where the infimum is taken over the $(1,n,\epsilon)$-covers $\mathcal{U}$ of $X$.
\end{proposition}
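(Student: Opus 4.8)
The plan is to show that the sequence
$$C_n(f_{1,\infty}):=\inf_{\mathcal U}\sum_{U\in\mathcal U}e^{S_{1,n}\psi(U)},$$
with the infimum taken over all $(1,n,\epsilon)$-covers $\mathcal U$ of $X$, is (essentially) submultiplicative, and then to conclude by the subadditive limit lemma exactly as in part (4) of Proposition \ref{proposition51000}. For a general starting time $i$ write $C_n(f_{i,\infty}):=\inf_{\mathcal U}\sum_{U}e^{S_{i,n}\psi(U)}$ over $(i,n,\epsilon)$-covers; the quantity in the statement is $\frac1n\log C_n(f_{1,\infty})$, so it suffices to prove that $\frac1n\log C_n(f_{1,\infty})$ converges.

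The heart of the argument is a cover-product construction. Given $m,n\geq 1$, I would fix a $(1,m,\epsilon)$-cover $\mathcal U$ and an $(m+1,n,\epsilon)$-cover $\mathcal V$ and form
$$\mathcal W:=\big\{\,U\cap (f_1^m)^{-1}(V):\ U\in\mathcal U,\ V\in\mathcal V,\ U\cap(f_1^m)^{-1}(V)\neq\emptyset\,\big\}.$$
First I would verify that $\mathcal W$ is a $(1,m+n,\epsilon)$-cover: each member is open because $f_1^m$ is continuous, and $\mathcal W$ covers $X$ since for $x\in X$ one chooses $U\ni x$ and $V\ni f_1^m(x)$. For the diameter bound, if $x,y$ lie in a common $W=U\cap(f_1^m)^{-1}(V)$, then $d(f_1^i x,f_1^i y)<\epsilon$ for $0\le i\le m$ because $x,y\in U\in\mathcal U$, while for $i=m+l$ with $0\le l\le n$ one uses $f_1^m x,f_1^m y\in V$ together with $f_1^{m+l}=f_{m+1}^l\circ f_1^m$ to get $d(f_1^{m+l}x,f_1^{m+l}y)=d\big(f_{m+1}^l(f_1^m x),f_{m+1}^l(f_1^m y)\big)<\epsilon$; hence the $d_{1,m+n}$-diameter of $W$ is less than $\epsilon$.

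Next comes the weight estimate, based on the additive splitting $S_{1,m+n}\psi(x)=S_{1,m}\psi(x)+S_{m+1,n}\psi(f_1^m x)$, which holds because $f_1^{m+l}=f_{m+1}^l\circ f_1^m$. Passing to suprema over $W\subseteq U\cap(f_1^m)^{-1}(V)$ yields $S_{1,m+n}\psi(W)\le S_{1,m}\psi(U)+S_{m+1,n}\psi(V)$, so $e^{S_{1,m+n}\psi(W)}\le e^{S_{1,m}\psi(U)}e^{S_{m+1,n}\psi(V)}$. Summing over $\mathcal W$ (the summands being nonnegative, summing over the indexing pairs $(U,V)$ only increases the total) factorizes the sum, and taking infima over $\mathcal U$ and over $\mathcal V$ gives the submultiplicative relation
$$C_{m+n}(f_{1,\infty})\le C_m(f_{1,\infty})\cdot C_n(f_{m+1,\infty}).$$

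The main obstacle is exactly the non-autonomous shift in the second factor: the inequality is a subadditive cocycle relation $\log C_{m+n}(f_{i,\infty})\le \log C_m(f_{i,\infty})+\log C_n(f_{i+m,\infty})$ over the time-shift, and unlike the autonomous case one cannot immediately invoke Fekete's lemma for the fixed starting time $1$. To close this I would resolve the shift in one of two ways, and this is the step demanding the most care. One route is to pass to $g(n):=\sup_{i\ge 1}\log C_n(f_{i,\infty})$, which \emph{is} genuinely subadditive, so that $\lim_n g(n)/n$ exists, and then to squeeze $\log C_n(f_{1,\infty})$ against $g(n)$ using uniform control of the cover quantities across starting times. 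The cleaner route, which I would favour, is to compare $(1,n,\epsilon)$-covers with the refinements $\bigvee_{j=0}^{n-1}f_1^{-j}(\mathcal A)$ of a \emph{fixed} cover $\mathcal A$ of diameter smaller than $\epsilon$ (each such refinement is itself a $(1,n,\epsilon)$-cover, and a Lebesgue-number argument lets any $(1,n,\epsilon)$-cover be compared back to such a refinement): this sandwiches $\frac1n\log C_n(f_{1,\infty})$ between quantities of the form $\frac1n\log p_n(f_{1,\infty},\psi,\mathcal A)$, whose limits already exist by Proposition \ref{proposition51000}(4), thereby forcing the limit superior in the statement to be a genuine limit.
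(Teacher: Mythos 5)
Your preferred Route B is, in substance, the paper's own proof: the paper disposes of Proposition \ref{pro999} in one line by taking the fixed cover $\mathcal{A}:=\{B(x,\epsilon):x\in X\}$ and invoking part (4) of Proposition \ref{proposition51000}. Your first step (the product-cover construction and the inequality $C_{m+n}(f_{1,\infty})\le C_m(f_{1,\infty})\,C_n(f_{m+1,\infty})$) is correct, and your diagnosis that the time shift in the second factor is the essential obstruction is exactly right. But neither of your routes closes it. In Route A the squeeze is impossible: $g(n)=\sup_{i}\log C_n(f_{i,\infty})$ can grow at a strictly larger exponential rate than $\log C_n(f_{1,\infty})$ (later maps may be far more expanding than earlier ones), and $g(n)$ can even be infinite for fixed $n$. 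In Route B the sandwich does not close: the half ``finite subcovers of $\bigvee_{j}f_1^{-j}(\mathcal{A})$ are $(1,n,\epsilon)$-covers'' requires every element of $\mathcal{A}$ to have diameter smaller than $\epsilon$, whereas the Lebesgue-number half requires the Lebesgue number of $\mathcal{A}$ to be at least $\epsilon$, and no cover satisfies both. Worse, in the Lebesgue-number half each $U$ is replaced by a strictly larger set $W_U\supseteq U$, so the weights are matched only up to a factor $e^{\mathrm{Var}_{1,n}(\psi,c\epsilon)}$, which for a merely continuous $\psi$ is of order $e^{n\,\omega_{\psi}(c\epsilon)}$ --- a linear-in-$n$ error at the fixed scale $\epsilon$. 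The sandwich therefore bounds the gap between $\limsup$ and $\liminf$ by a modulus-of-continuity term; it does not prove they coincide.

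In fact no argument can repair this, because the statement is false in the stated generality, and the failure mechanism is precisely the shifted-cocycle obstruction you isolated. Take $X=S^1$, $\psi\equiv0$, and let $f_n$ be the doubling map $T$ for $2^{2k}\le n<2^{2k+1}$ and the identity for $2^{2k+1}\le n<2^{2k+2}$ ($k\ge0$). Then $f_1^i=T^{D(i)}$, where $D(i)$ counts the doubling maps among $f_1,\dots,f_i$, so $d_{1,n}$ is the Bowen metric of $T$ at time $D(n)$, and $C_n\asymp 2^{D(n)}$ with constants depending only on $\epsilon$. Since $D(n)/n$ oscillates between $1/3$ and $2/3$, the sequence $\frac1n\log C_n$ has no limit. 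The same example shows that the limit in part (4) of Proposition \ref{proposition51000} need not exist for NDSs: its classical, autonomous proof is exactly the Fekete argument whose subadditivity degenerates into the shifted inequality $p_{m+n}(f_{1,\infty},\psi,\mathcal{A})\le p_m(f_{1,\infty},\psi,\mathcal{A})\,p_n(f_{m+1,\infty},\psi,\mathcal{A})$ that you wrote down. So the paper's one-line reduction hides, rather than resolves, the very difficulty you identified; the proposition can only hold under additional hypotheses coupling the dynamics across starting times (the paper only ever applies it together with strong $\delta$-expansivity and bounded variation, in Theorem \ref{theorem111}), and your instinct about where any proof must strain was correct.
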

\begin{proof}
This is a direct consequence of part $(4)$ of Proposition \ref{proposition51000}. Indeed, for every $\epsilon>0$, it is enough to take $\mathcal{A}:=\{B(x,\epsilon): x\in X\}$, where $B(x,\epsilon)$ is the open ball with center $x$ and radius $\epsilon$.
\end{proof}
\begin{theorem}\label{theorem111}
Let $(X, f_{1,\infty})$ be a strongly $\delta$-expansive NDS for some $\delta>0$, and let $\psi: X \to \mathbb{R}$ be a continuous potential with the uniformly bounded variation property. Then the following properties hold:
\begin{enumerate}
\item [(1)] the pressure function $t\mapsto P_{\text{top}}(f_{1,\infty},t\psi)$ is an uniform limit of differentiable maps;
\item [(2)] $t\mapsto P_{\text{top}}(f_{1,\infty},t\psi)$ is differentiable Lebesgue-almost everywhere.
\end{enumerate}
\end{theorem}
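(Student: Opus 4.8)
The plan is to deduce both statements from two structural properties of the pressure function — convexity and the Lipschitz estimate of Theorem \ref{proposition4} — and then to realize the required differentiable approximants at the definite scale made available by $\ast$-expansivity and the uniformly bounded variation property. First I would record finiteness. Since $X$ is compact and the NDS is strongly $\delta$-expansive, the entropy $h_{\text{top}}(X,f_{1,\infty})$, which by Theorem \ref{theorem6} equals $\limsup_{n}\frac1n\log s_{n}(f_{1,\infty};\epsilon)$ for any $0<\epsilon<\delta$, is finite (arguing as in the classical expansive case). Hence, by parts (2) and (3) of Theorem \ref{proposition4}, the function $\beta(t):=P_{\text{top}}(f_{1,\infty},t\psi)$ is finite-valued. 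Part (4) of Theorem \ref{proposition4} then gives $|\beta(t)-\beta(s)|\le\|t\psi-s\psi\|=|t-s|\,\|\psi\|$, so $\beta$ is globally Lipschitz, and part (5) gives that $\beta$ is convex.

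Part (2) follows at once: a finite convex function on $\mathbb{R}$ admits left and right derivatives at every point, these coincide off an at most countable set, and therefore $\beta$ is differentiable at Lebesgue-almost every $t$. I emphasize that convexity is essential here, since a mere uniform limit of differentiable maps need not be almost-everywhere differentiable; thus part (2) genuinely rests on Theorem \ref{proposition4}(5) rather than on part (1).

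For part (1) I would construct explicit differentiable approximants. Fix $\epsilon$ with $0<\epsilon<\min\{\delta,\epsilon_{0}/2\}$, where $\epsilon_{0}$ is a scale on which $\psi$ has uniformly bounded variation with constant $C$; on a compact $t$-interval the potential $t\psi$ then has uniformly bounded variation with constant $|t|C$. For each $n$ choose, once and for all (independently of $t$), a maximal $(n,\epsilon;f_{1,\infty})$-separated set $E_{n}$, and set
\[
\beta_{n}(t):=\frac1n\log\sum_{x\in E_{n}}e^{t\,S_{1,n}\psi(x)}.
\]
Each $\beta_{n}$ is real-analytic, hence differentiable; its derivative is a convex combination of the numbers $\tfrac1n S_{1,n}\psi(x)\in[\inf\psi,\sup\psi]$, so the family $\{\beta_{n}\}$ is uniformly $\|\psi\|$-Lipschitz and uniformly bounded on each compact $t$-interval. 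Since $E_{n}$ is one admissible separated set, $\beta_{n}(t)\le\frac1n\log P_{n}(f_{1,\infty},t\psi,\epsilon)$; and since a maximal separated set is $(n,\epsilon;f_{1,\infty})$-spanning, the uniformly bounded variation of $t\psi$ together with a bounded-multiplicity comparison yields $\frac1n\log P_{n}(f_{1,\infty},t\psi,\epsilon)\le\beta_{n}(t)+\frac1n(\log M+|t|C)$ for a multiplicity constant $M=M(\epsilon,\epsilon_{0})$, exactly along the lines of inequalities (\ref{eq9})--(\ref{eq10}) in the proof of Proposition \ref{proposition3}. Consequently $\beta_{n}$ differs from the cover quantity $g_{n}(t):=\frac1n\log\big(\inf_{\mathcal U}\sum_{U}e^{S_{1,n}(t\psi)(U)}\big)$ by $o(1)$ uniformly in $t$ on compact intervals.

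It then remains to pass to a genuine uniform limit. By Proposition \ref{pro999} the limit $\lim_{n}g_{n}(t)$ exists, and by Remark \ref{remark512} (whose hypothesis holds since a uniformly Ruelle-expanding, and more generally an $\ast$-expansive, NDS computes pressure at the definite scale $\epsilon<\delta$, cf. Lemma \ref{lemma512}) it equals $\beta(t)$; hence $\beta_{n}(t)\to\beta(t)$ pointwise. Finally, pointwise convergence of the convex functions $\beta_{n}$ to the finite convex function $\beta$ forces uniform convergence on compact subintervals, exhibiting $\beta$ as a uniform limit of the differentiable maps $\beta_{n}$ and proving (1). The main obstacle is precisely this last step: one must upgrade the $\limsup$ of Theorem \ref{theorem6} to a genuine limit and control the dependence on the parameter $t$ uniformly. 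This is where both hypotheses enter essentially — strong $\delta$-expansivity pins the computation to the definite scale, while the uniformly bounded variation property makes the separated, spanning and cover partition functions comparable with an error that is $o(1)$ uniformly in $t$ on compact intervals.
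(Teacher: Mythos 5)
Your proposal is correct in substance and reaches both conclusions, but by a genuinely different route than the paper, and in one respect a more careful one. For part (1) the paper takes as differentiable approximants the cover quantities $P_n(t\psi)=\frac1n\log\inf_{\mathcal U}\sum_{U\in\mathcal U}e^{tS_{1,n}\psi(U)}$ themselves, asserts they are real analytic, and differentiates under the infimum to obtain the uniform derivative bound; strictly speaking an infimum of analytic functions need not be differentiable, so your device of fixing a maximal $(n,\epsilon;f_{1,\infty})$-separated set $E_n$ once and for all and using the genuinely real-analytic log-sum-exp functions $\beta_n$ repairs this weak point, at the cost of having to squeeze $\beta_n$ between cover quantities at two scales --- which is exactly what (\ref{eq9})--(\ref{eq10}), Proposition \ref{pro999} and Remark \ref{remark512} provide --- to get pointwise convergence to the pressure. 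Your final upgrade to uniform convergence on compacta via convexity plays the role of the paper's equicontinuity argument (your uniform $\|\psi\|$-Lipschitz bound on the $\beta_n$ gives the same conclusion, so either works). For part (2) the paper argues via the Lipschitz estimate together with almost-everywhere differentiability of Lipschitz functions on $\mathbb{R}$, while you argue via convexity and finiteness; both are valid, and your observation that (2) cannot be deduced from (1) alone is correct.

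Two caveats. First, your parenthetical ``bounded-multiplicity comparison'' at the single scale $\epsilon$ is not available in a general compact metric space: the number of $(n,\epsilon;f_{1,\infty})$-separated points inside a $d_{1,n}$-ball of radius $\epsilon$ can grow with $n$, so the injection must go into an $(n,\epsilon/2;f_{1,\infty})$-spanning set, or one compares covers at scales $\epsilon$ and $2\epsilon$ as in the proof of Proposition \ref{proposition3}, with strong $\delta$-expansivity (Theorem \ref{theorem6}, Remark \ref{remark512}) then identifying the limits across scales; since this two-scale argument is precisely what you cite, this is an imprecision rather than a gap. Second, your opening claim that strong $\delta$-expansivity forces $h_{\text{top}}(X,f_{1,\infty})<\infty$ ``as in the classical expansive case'' is false for NDSs: for example $f_n(x)=m_nx \ (\mathrm{mod}\ 1)$ on $S^1$ with $m_n\to\infty$ sufficiently fast gives a uniformly Ruelle-expanding, hence $\ast$-expansive (Lemma \ref{lemma512}), NDS with infinite topological entropy. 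Finiteness of the pressure is a genuine implicit hypothesis that both your argument and the paper's require --- the paper never addresses it at all --- so this does not put you behind the paper's proof, but the justification you offer for finiteness is not valid and should simply be replaced by the standing assumption $P_{\text{top}}(f_{1,\infty},\cdot)<\infty$.
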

\begin{proof}
To prove we apply the approach used by Rodrigues and Varandas \cite[Theorem 27]{RV} which deals with finitely generated semigroup actions to our setting.
By hypothesis let $\epsilon_{0}>0$ be so that $\psi$ has uniform bounded variation on dynamical balls of radius $\epsilon_0$. By Remark \ref{remark512}, for any $0<\epsilon<\min\{\delta, \epsilon_0/2\}$ we have
\begin{equation*}
P_{\text{top}}(f_{1,\infty},\psi)=\limsup_{n\to\infty}\frac{1}{n}\log \Big(\inf_{\mathcal{U}}\sum_{U\in\mathcal{U}}e^{S_{1,n}\psi(U)}\Big),
\end{equation*}
where the infimum is taken over the $(1,n,\epsilon)$-covers $\mathcal{U}$ of $X$. Also, by Proposition \ref{pro999}, the right hand side of above equality is actually a true limit. Thus, for any $t \in \mathbb{R}$ we have that
\begin{equation*}
P_{\text{top}}(f_{1,\infty},t\psi)=\lim_{n\to\infty}\frac{1}{n}\log \Big(\inf_{\mathcal{U}}\sum_{U\in\mathcal{U}}e^{t S_{1,n}\psi(U)}\Big),
\end{equation*}
where the infimum is taken over the $(1,n,\epsilon)$-covers $\mathcal{U}$ of $X$ for
any $0<\epsilon<\min\{\delta, \epsilon_0/2\}$. It means that the 
map $t \mapsto P_{\text{top}}(f_{1,\infty}, t\psi)$
 is a pointwise limit of real analytic functions. We show that the convergence is uniform. To prove this we will prove that the sequence of real functions $\big(P_n(t\psi)\big)_{n\geq 1}$ defined by
\begin{equation*}
 t\mapsto P_n(t\psi):= \frac{1}{n}\log C_n(f_{1,\infty}, t\psi, \epsilon),
\end{equation*}
where
\begin{equation*}
 C_n(f_{1,\infty}, t\psi, \epsilon)=\inf_{\mathcal{U}}\sum_{U\in\mathcal{U}}e^{t S_{1,n}\psi(U)}
\end{equation*}
and the infimum is taken over the $(1,n,\epsilon)$-covers $\mathcal{U}$ of $X$ is equicontinuous in compact intervals. This means that
given $\epsilon > 0$ there exists $\delta^{\prime}> 0$ such that if $|t_1 - t_2|<\delta^{\prime}$
then $|P_n(t_1 \psi) - P_n(t_2 \psi)|<\epsilon$, for every $n\in\mathbb{N}$.
In what follows assume $\epsilon>0$ is fixed and let $0<\delta^{\prime}<\epsilon/ \|\psi\|$.
Given $t_1,t_2$ arbitrary with $|t_1 - t_2|<\delta^{\prime}$, we get
\begin{eqnarray*}
|P_n(t_1 \psi) - P_n(t_2 \psi)| &=& \frac{1}{n}\log \Bigg[\frac{\inf_{\mathcal{U}}\sum_{U\in\mathcal{U}}e^{t_1 S_{1,n}\psi(U)}}{\inf_{\mathcal{U}}\sum_{U\in\mathcal{U}}e^{t_2 S_{1,n}\psi(U)}}\Bigg] \\
&\leq&  \frac{1}{n}\log \Bigg[\frac{e^{n\delta^{\prime}\|\psi\|}\inf_{\mathcal{U}}\sum_{U\in\mathcal{U}}e^{t_2 S_{1,n}\psi(U)}}{\inf_{\mathcal{U}}\sum_{U\in\mathcal{U}}e^{t_2 S_{1,n}\psi(U)}}\Bigg] \\
&=& \delta^{\prime}\|\psi\|<\epsilon.
\end{eqnarray*}
Thus the sequence $\big(P_n(t\psi)\big)_{n\geq 1}$ is equicontinuous. Since $\big(P_n(t\psi)\big)_{n\geq 1}$ converges pointwise, we have that the sequence converges uniformly on compact intervals and so the map
$t \mapsto P_{\text{top}}(f_{1,\infty}, t\psi)$ is continuous. Furthermore, for any $n\in \mathbb{N}$ the function $t \mapsto P_n(\psi + t\varphi)$ is differentiable and
\begin{equation*}
\Big |\frac{dP_n(\psi + t\varphi)}{dt} \Big |=\frac{1}{C_n(f_{1,\infty}, \psi+t\varphi, \epsilon)}\frac{1}{n}\Big( \inf_{\mathcal{U}}\sum_{U\in\mathcal{U}}S_{1,n}\varphi(U)e^{ S_{1,n}(\psi + t\varphi)(U)}\Big)
\end{equation*}
is bounded from above by $\|\varphi\|$, where $\varphi: X \to \mathbb{R}$ is a continuous potential with the uniformly bounded variation property and the infimum is taken over the $(1,n,\epsilon)$-covers $\mathcal{U}$ of $X$. This proves property (1). Moreover, by the mean value inequality
\begin{equation*}
  |P_n(\psi) - P_n(\varphi)| \leq \sup_{0\leq t\leq 1} \Big | \frac{dP_n (\psi +t(\varphi - \psi) )}{dt}\Big | \leq \|\psi - \varphi\|.
\end{equation*}
Taking $n \to \infty$ we get
that $|P_{\text{top}}(f_{1,\infty}, \psi) - P_{\text{top}}(f_{1,\infty}, \varphi)| \leq \|\psi - \varphi\|$ (note that, this is another proof of part $(4)$ of Theorem \ref{proposition4}) and so the pressure function $P_{\text{top}}(f_{1,\infty},.)$ acting on the space of potentials with uniformly bounded variation property is Lipschitz continuous with Lipschitz constant equal to one. On the other hand, Lipschitz functions on the real line are differentiable almost everywhere. Using these facts, we deduce that $t \mapsto P_{\text{top}}(f_{1,\infty}, t\psi)$ is Lebesgue-almost everywhere differentiable, which completes the proof of the theorem.
\end{proof}
The next corollary is a consequence of the Theorem \ref{theorem111} and Lemma \ref{lemma512}.
\begin{corollary}
Let $(X, f_{1,\infty})$ be a uniformly Ruelle-expanding NDS and $\psi: X \to \mathbb{R}$ be a continuous potential with the uniformly bounded variation property. Then the following properties hold:
\begin{enumerate}
\item [(1)] the pressure function $t\mapsto P_{\text{top}}(f_{1,\infty},t\psi)$ is an uniform limit of differentiable maps;
\item [(2)] $t\mapsto P_{\text{top}}(f_{1,\infty},t\psi)$ is differentiable Lebesgue-almost everywhere.
\end{enumerate}
\end{corollary}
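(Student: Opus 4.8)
The plan is to obtain this corollary purely by specialization, combining two results already proved in the excerpt: Lemma \ref{lemma512} supplies the expansivity hypothesis that Theorem \ref{theorem111} requires, and Theorem \ref{theorem111} then supplies both conclusions verbatim. So the proof reduces to checking that the hypotheses line up correctly.

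First I would invoke Lemma \ref{lemma512}. Since $(X,f_{1,\infty})$ is a uniformly Ruelle-expanding NDS with uniform expansion factor $\sigma$ and injectivity constant $\rho$, that lemma shows it is strongly $\rho$-expansive, and hence $\ast$-expansive. In particular there exists some $\delta>0$ (one may take $\delta=\rho$) for which the NDS is strongly $\delta$-expansive, which is exactly the standing hypothesis of Theorem \ref{theorem111}. Next I would note that the potential $\psi$ is assumed to have the uniformly bounded variation property, which is precisely the regularity condition imposed on the potential in Theorem \ref{theorem111}.

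With both hypotheses in force for the pair $\big((X,f_{1,\infty}),\psi\big)$, I would simply quote the conclusions of Theorem \ref{theorem111}: the map $t\mapsto P_{\text{top}}(f_{1,\infty},t\psi)$ is a uniform limit (on compact intervals) of differentiable maps, which is assertion (1), and it is differentiable Lebesgue-almost everywhere, which is assertion (2).

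I expect no genuine obstacle here, since the entire analytic content already lives inside the proof of Theorem \ref{theorem111}: the equicontinuity and uniform convergence of the functions $t\mapsto P_n(t\psi)$, the bound $\|\varphi\|$ on their derivatives, the Lipschitz estimate with constant one for the pressure function, and the appeal to almost-everywhere differentiability of Lipschitz functions on the real line. The only point worth stating explicitly is that ``$\ast$-expansive'' unwinds, by its very definition, to ``strongly $\delta$-expansive for some $\delta>0$'', so that Lemma \ref{lemma512} delivers exactly the input that Theorem \ref{theorem111} consumes; once this is observed, the corollary is immediate.
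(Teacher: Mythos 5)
Your proposal is correct and is exactly the paper's argument: the authors state this corollary as an immediate consequence of Theorem \ref{theorem111} combined with Lemma \ref{lemma512}, just as you do. Your additional remark that strong $\rho$-expansivity from Lemma \ref{lemma512} instantiates the ``strongly $\delta$-expansive for some $\delta>0$'' hypothesis of Theorem \ref{theorem111} is the only verification needed, and you have made it correctly.
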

\section{Applications}\label{applications}
In this section, we provide several examples of uniformly Ruelle-expanding NDSs that satisfies the specification 
and $\ast$-expansive properties (Examples \ref{example1}, \ref{example2}, \ref{example3} and \ref{example4}). Consequently, they have positive topological entropy and all points are entropy point. In particular, they are topologically chaotic. Moreover, we give an example illustrates that no element of an NDS need to have uniform expansion factor for the NDS to have the specification property (Example \ref{example5}). Finally, as an application of Corollary \ref{corollary3}, we provide an example which does not have the specification property (Example \ref{example6}).
\begin{example}\label{example1}
Let $X=\{1,2,\ldots,d\}$ and $A=(a_{ij})$ be a $d$ by $d$ transition matrix, i.e. a square matrix of dimension $d\geq 2$ such that (i) $a_{ij}=0,1$ for all $i$ and $j$, (ii) $\sum_{j}a_{ij}\geq 1$ for all $i$, and (iii) $\sum_{i}a_{ij}\geq 1$ for all $j$. Consider the subset $\Sigma_{A}$ of $\Sigma=X^{\mathbb{N}}$ consisting of all the sequences $(x_{n})_{n}\in\Sigma$ that are $A$-admissible, meaning that $a_{x_{n}x_{n+1}}=1$ for every $n\in\mathbb{N}$. It is clear that $\Sigma_{A}$ is invariant under the shift map $\sigma:\Sigma\to\Sigma$ (defined by $(\sigma\textbf{x})_{n}=x_{n+1}$ for each $\textbf{x}=(x_{n})_{n}\in\Sigma$) in the sense that $\sigma(\Sigma_{A})=\Sigma_{A}$. Note also that $\Sigma_{A}$ is closed in $\Sigma$, and so it is a compact metric space. The restriction $\sigma_{A}:\Sigma_{A}\to\Sigma_{A}$ of the shift map $\sigma:\Sigma\to\Sigma$ to this invariant set is called \emph{one-sided subshift of finite type} associated with $A$. Consider in $\Sigma_{A}$ the distance defined by
\begin{equation*}
d((x_{n})_{n},(y_{n})_{n}):=2^{-N},\ \ \text{where}\ \ N:=\inf\{n\in\mathbb{N}:x_{n}\neq y_{n}\}.
\end{equation*}
Then, $\sigma_{A}^{m}$ is an expanding map for every $m\geq 1$, see \cite[Example 11.2.4]{VO}. 

Now, let $A$ be an eventually positive transition matrix, i.e. there exists a $k\geq 1$ which is independent of $i$ and $j$ such that $(A^{k})_{ij}>0$ for all $i$ and $j$, and let $B$ be a non-empty finite set of positive integers. Set $\mathcal{B}:=\{\sigma_{A}^{m}:m\in B\}$. Then, each NDS $(\Sigma_{A},f_{1,\infty})$ with $f_{n}\in\mathcal{B}$ is a uniformly Ruelle-expanding NDS, and so it has the shadowing property. On the other hand, the NDS $(\Sigma_{A},f_{1,\infty})$ satisfies the topologically mixing property, because $\sigma_{A}$ as an autonomous dynamical system is topologically mixing on $\Sigma_{A}$ and $f_{1}^{n}=\sigma_{A}^{s_{n}}$ for some $s_{n}\geq 1$ and all $n\geq 1$. Hence, by Theorem \ref{theorem00} and Lemma \ref{lemma512}, the NDS $(\Sigma_{A},f_{1,\infty})$ satisfies the specification and $\ast$-expansive properties.
\end{example}
\begin{example}\label{example2}
Let $f_{A}:\mathbb{T}^{d}\to\mathbb{T}^{d}$ be the linear endomorphism of the torus $\mathbb{T}^{d}=\mathbb{R}^{d}/\mathbb{Z}^{d}$ induced by
some matrix $A$ with integer coefficients and determinant different from zero. Assume that all the
eigenvalues $\lambda_{1},\lambda_{2},\ldots,\lambda_{d}$ of $A$ are larger than $1$ in
absolute value. Then, given any $1<\sigma<\inf_{i}|\lambda_{i}|$, there exists an inner product
in $\mathbb{R}^{d}$ relative to which $||Av||\geq\sigma ||v||$ for every $v\in\mathbb{R}^{d}$. This shows that the transformation $f_{A}$ is expanding, see \cite[Example 11.1.1]{VO}.

Now, let $\mathcal{A}$ be a non-empty finite set of different matrices enjoying the above conditions. Then, by Remark \ref{remark000}, each NDS $(\mathbb{T}^{d},f_{1,\infty})$ with $f_{n}\in\{f_{A}:A\in\mathcal{A}\}$ is a uniformly Ruelle-expanding NDS with the topologically exact property. Hence, by Theorem \ref{theorem1} and Lemma \ref{lemma512}, the NDS $(\mathbb{T}^{d},f_{1,\infty})$ enjoys the specification and $\ast$-expansive properties.
\end{example}
\begin{example}\label{example3}
Let $A$ be a non-empty finite set of positive integers $k>1$ and $S^{1}=\mathbb{R}/\mathbb{Z}$. Consider
the set $\mathcal{A}=\{f_{k}:S^{1}\to S^{1}: f_{k}(x)=kx\ \text{(mod 1)},\ k\in A\}$. Then, by Remark \ref{remark000}, each NDS $(S^{1},f_{1,\infty})$ with $f_{n}\in\mathcal{A}$ is a uniformly Ruelle-expanding NDS with the topologically exact property. Hence, by Theorem \ref{theorem1} and Lemma \ref{lemma512}, the NDS $(S^{1},f_{1,\infty})$ satisfies the specification and $\ast$-expansive properties. 
\end{example}
\begin{example}\label{example4}
Consider the NDS $(S^{1},f_{1,\infty})$ on the unit circle $S^{1}$ in which $f_{n}:S^{1}\to S^{1}$ defined by $f_{n}(e^{i\theta}):=e^{i\frac{2n+1}{n}\theta}$. It is clear that the NDS $(S^{1},f_{1,\infty})$ is a uniformly Ruelle-expanding NDS and satisfies the topologically exact property (note that $S^1$ is a geodesic space). Hence, by Theorem \ref{theorem1} and Lemma \ref{lemma512}, the NDS $(S^{1},f_{1,\infty})$ satisfies the specification and $\ast$-expansive properties.
\end{example}
The next example illustrates that no element of a NDS need to have uniform expansion factor for the NDS to have 	the specification property.
\begin{example}\label{example5}
For positive constant $0<\alpha<1$ the Pomeau-Manneville map $f_\alpha:[0,1]\to[0,1]$ given by
 \begin{equation*}
f_\alpha(x)=
\begin{cases}
x+2^\alpha x^{1+\alpha} & \mbox{if}\ 0 \leq x \leq 1/2,\\
2x-1 & \mbox{if}\ 1/2 < x \leq 1.
\end{cases}
\end{equation*}
Although $f_{\alpha}$ is not continuous, it induces a continuous and topologically mixing circle map $\tilde{f}_{\alpha}$ taking $S^{1}=[0,1]/\sim$ with the identification $0\sim 1$. 

Now, let us take $0 <\beta<1$ and the family of real numbers $\{\alpha_{n}: 0 <\beta< \alpha_{n}<1\}$. 
Let $(S^1,f_{1,\infty})$ be an NDS in which $f_{n}:=\tilde{f}_{\alpha_{n}}$ for all $n\geq 1$. Note that no element of the sequence $f_{1,\infty}$ is an expanding map.
We claim that the NDS $(S^1,f_{1,\infty})$ satisfies the specification property. First, we observe that for every $x\in S^{1}$, $\epsilon>0$, $k\in\mathbb{N}$ and $n\geq 0$ the $(n+1)$-dynamical ball $B(x,k,n,\epsilon)$ satisfes $f_{k}^{n}(B(x,k,n,\epsilon))=B(f_{k}^{n}(x),\epsilon)$. Second, although $f_{n}$ is not uniformly expanding, it enjoys the following scaling property: given $\delta > 0$, $\text{diam}(f_{n}([0,\delta]))\geq \frac{\delta}{2}+\frac{\delta}{2}[1+(1+\beta)\delta^\beta]=c_\delta \text{diam}([0,\delta])$
and $\text{diam}(f_{n}(I))\geq \sigma_\delta \text{diam}(I)$ for every ball $I \subset S^1$
of diameter larger or equal to $\delta$, where $c_\delta:=(1+\delta(1+\beta)\delta^\beta)>1$ and $\sigma_\delta >1$ (depending on $\delta$). Here, we use
$f^{\prime}_{\alpha_{n}}(x)\geq (1+(1+\beta)2^\beta x^\beta)\geq (1+(1+\beta)\delta^\beta)$ for 
every $x \in [\frac{\delta}{2},\frac{1}{2}]$ and $f^{\prime}_{\alpha_{n}}(x)=2$ for every $x \in (\frac{1}{2},1]$, see \cite[Example 32]{RV}. Using the previous expression recursively, we deduce that there exists $N_\delta>0$ such that for each $k\geq 1$ and $n \geq N_\delta$ one has that $f_{k}^{n}(B(x,\delta))=S^1$, for each $x \in S^1$. This means that the NDS $(S^1,f_{1,\infty})$ has the topologically exact property. Thus, we can apply the approach used in the proof of Theorem \ref{theorem1} to conclude the NDS $(S^1,f_{1,\infty})$ has the specification property.
Consequently, the NDS $(S^1,f_{1,\infty})$ has positive topological entropy and all points are entropy point. In particular, it is topologically chaotic.
\end{example}
As an application of Corollary \ref{corollary3} we give the following example.
\begin{example}\label{example6}
Let $I=[0,1]$ and $g(x)=|1-|3x-1||$. Take a sequence of points $0=a_{1}<b_{1}=a_{2}<b_{2}=\cdots=a_{n}<b_{n}=\cdots$ converging to $1$. Define $\phi\in\mathcal{C}(I,I)$ such that
$\phi|_{[a_{n},b_{n}]}=\sigma_{n}^{-1}\circ g^{n}\circ \sigma_{n}$ for each $n\geq 1$, where $\sigma_{n}$ is the unique increasing affine map from $[a_{n},b_{n}]$ onto $[0,1]$. Let
\begin{equation*}
f_{n}(x)=
\begin{cases}
  \phi(x) & \mbox{if}\ x\in[a_{n},b_{n}],\\
  x & \mbox{if}\ x\in I\setminus[a_{n},b_{n}].
\end{cases}
\end{equation*}
Then $(I,f_{1,\infty})$ is an NDS of surjective maps and $h_{\text{top}}(I, f_{n,\infty})=0$ for every $n\geq1$, see \cite[Figure 6a and comments]{KS}. Consequently, $h^{*}(I, f_{\infty})=0$. Hence, by Corollary \ref{corollary3}, the NDS $(I, f_{1,\infty})$ does not have the specification property.
\end{example}
\section*{Acknowledgements}
The authors would like to thank the respectful referee for his/her comments on the manuscript.


\begin{thebibliography}{99}
\bibitem{AKM} Adler, R., Konheim, A., McAndrew, M.: Toplogical entropy. Trans. Am. Math. Soc. {\bf 114}, 309--319 (1965)

\bibitem{BV} Barreira, L., Valls, C.: Stability of nonautonomous differential equations. Lecture notes in mathematics, vol. 1926. Springer-Verlag, Berlin Heidelberg (2008)

\bibitem{AB}  Bis, A.: An analogue of the variational principle for group and pseudogroup actions. Ann. Inst. Fourier (Grenoble). {\bf 63}(3), 83--863 (2013)

\bibitem{LBWC} Block, L.S., Coppel, W.A.: Dynamics in one dimension. Lecture notes in nathematics, vol.
1513. Springer-Verlag, Berlin Heidelberg (1992)

\bibitem{BGM} Block, L., Guckenheimer, J., Misuirewicz, M., Young, L.S.: Periodic orbits and topological entropy
of one-dimensional maps. Global theory of dynamical systems, Lecture notes in mathematics, vol. 819, Springer-Verlag, New York (1980)

\bibitem{RB1} Bowen, R.: Topological entropy and axiom A, Global analysis. Proc. Symp. Pure Math. {\bf 14}, 23--41 (1970)

\bibitem{RB} Bowen, R.: Entropy for group endomorphisms and homogenuous spaces. Trans. Am. Math. Soc.
{\bf 153} 401--414 (1971)

\bibitem{RB3} Bowen, R.: Periodic points and measures for Axiom A diffeomorphisms. Trans. Am. Math. Soc. {\bf 154}, 377--397 (1971)

\bibitem{B3} Bowen, R.: Equilibrium states and the ergodic theory of Anosov diffeomorphisms. Lecture notes in mathematics, vol. 470. Springer (1975)

\bibitem{MCFBRPV} Carvalho, M., Rodrigues, F.B., Varandas, P.: Semigroup actions of expanding maps. J. Stat. Phys. {\bf 166}, 114--136 (2017)

\bibitem{CRVV} Castro, A., Rodrigues, F.B., Varandas, P.: Stability and limit theorems for sequences of uniformly hyperbolic dynamics. https://arxiv.org/abs/1709.01652 (2017)

\bibitem{CRV} Castro, A., Rodrigues, F.B., Varandas, P.: Leafwise shadowing property for partiallly hyperbolic diffeomorphisms, Submitted (2017)

\bibitem{CL} Cioletti, L., Lopes, A.O.: Ruelle Operator for Continuous Potentials and DLR-Gibbs Measures. https://arxiv.org/pdf/1608.03881 (2016)

\bibitem{TD} Downarowciz, T.: Positive topological entropy implies chaos DC2. Proc. Am. Math. Soc. {\bf 142}(1), 137--149 (2014)

\bibitem{LWG} Goodwyn, L.W.: Topological entropy bounds and measure-theoretic entropy. Proc. Am. Math. Soc. {\bf 23}, 679--688 (1969)

\bibitem{TNTG} Goodman, T.N.T.: Relating to entropy and measure entropy. Bull. London Math. Soc. {\bf 3}, 176--180 (1971)

\bibitem{HWZ} Huang, X., Wen, X., Zeng, F.: Topological pressure of nonautonomous dynamical systems. Nonlinear Dyn. Syst. Theory. {\bf 8}(1), 43--48 (2008)

\bibitem{SI} Ito, S.: An Estimate from above for the Entropy and the topological entropy of a $C^{1}$ diffeomorphism. Proc. Jpn. Acad. {\bf 46}, 226--230 (1970)

\bibitem{K1} Kawan, C.: Metric entropy of nonautonomous dynamical systems. Nonauton. Stoch. Dyn. Syst. {\bf 1}, 26--52 (2013)

\bibitem{K2} Kawan, C.: Expanding and expansive time-dependent dynamics. Nonlinearity. {\bf 28}, 669--695 (2015)

\bibitem{K3} Kawan, C., Latushkin, Y.: Some results on the entropy of non-autonomous dynamical systems. Dynamical Systems. {\bf 28}, 1--29 (2015)

\bibitem{KR} Kloeden, P.E., Rasmussen, M.: Nonautonomous dynamical systems. Mathematical surveys and monographs, vol. 176. American Mathematical Society (2011)

\bibitem{KS} Kolyada, S., Snoha, L.: Topological entropy of nonautonomous dynamical systems. Random Comput. Dyn. {\bf 4}, 205--233 (1996)

\bibitem{AM} Manning, A.: Topological entropy and the first homology group. In dynamical systems, Warwick, 1974, vol. 468 of lecture notes in mathematics, Springer-Verlag, 185--190 (1975)

\bibitem{MR} Memarbashi, R., Rasuli, H.: Notes on the dynamics of nonautonomous
discrete dynamical systems. Journal of Advanced Research in Dynamical and Control Systems. {\bf 6}(2), 8--17 (2014)

\bibitem{MMFP} Misiurewicz, M., Przytycki, F.: Topological entropy and degree of smooth mappings.
Bull. Acad. Pol. Sci. Math. Astron. Phys. {\bf 25}, 573--574 (1977)

\bibitem{O} Ott, W., Stendlund, M., Young, L.S.: Memory loss for time-dependent dynamical systems. Math. Res. Lett. {\bf 16}, 463--475 (2009)

\bibitem{RV} Rodrigues, F.B., Varandas, P.: Specification and thermodynamical properties of semigroup actions. Math. Phy. {\bf 57}, 052704 (2016)

\bibitem{DR} Ruelle, D.: Statistical mechanics on a compact set with $\mathbb{Z}^{\nu}$ action
satisfying expansiveness and specification. Trans. Am. Math. Soc. {\bf 185}, 237--251 (1973)

\bibitem{DR1} Ruelle, D.: A measure associated with axiom A attractors. Am. J. Math. {\bf 98},
619--654 (1976)

\bibitem{DRYS} Ruelle, D., Sinai, Y.G.: From dynamical systems to statistical mechanics and back. Physica A: Statistical Mechanics and its Applications. {\bf 140}(1-2), 1--8 (1986)

\bibitem{DR2} Ruelle, D.: Thermodynamic formalism. Addison-Wesley, Reading (1978)

\bibitem{YS} Sinai, Y.G.: Gibbs measures in ergodic theory. Russian Math. Surveys. {\bf 27}, 21--69 (1972)

\bibitem{DTRD} Thakkar, D., Das, R.: Topological stability of a sequence of maps on a compact metric space. Bull. Math. Sci. {\bf 4}, 99--111 (2014)

\bibitem{VO} Viana, M., Oliveira, K.: Foundations of ergodic theory. Cambridge university press (2016)

\bibitem{PW} Walters, P.: An introduction to ergodic theory. Graduate texts in mathematics, vol. {\bf 79}.
Spinger, New York and Berlin (1982)

\bibitem{PW1} Walters, P.: A variational principle for the pressure of continuous transformations. Am. J. Math.
{\bf 97}, 937--971 (1975)

\bibitem{W1} Walters, P.: Convergence of the Ruelle operator for a function satisfying Bowen’s condition. Trans. Am. Math. Soc. {\bf 353}(1), 327--347 (2000)
\end{thebibliography}
\end{document}